\newtheoremstyle{myremark}     {10pt}{10pt}{}{}{\bfseries}{.}{.5em}{}
 \newtheorem{thm}{Theorem}[section]
 \newtheorem{lem}[thm]{Lemma}
 \newtheorem{prop}[thm]{Proposition}
 \theoremstyle{definition}
 \newtheorem{defn}[thm]{Definition}
  \theoremstyle{myremark}
  \newtheorem{rem}[thm]{Remark}
 \newcommand{\h}{\mathcal{H}}
 \newcommand{\J}{\mathcal{J}}
 \newcommand{\F}{\mathcal{F}}
 \newcommand{\B}{\mathbf{B}}
 \newcommand{\N}{\mathbb{N}}
 \newcommand{\R}{\mathbb{R}}
 \newcommand{\C}{\mathbb{C}}
 \newcommand{\abs}[1]{\left\vert#1\right\vert}
 \newcommand{\set}[1]{\left\{#1\right\}}
 \newcommand{\norm}[1]{\left\Vert#1\right\Vert}
 \newcommand\inner[2]{\left\langle #1, #2 \right\rangle}
\begin{document}
\title[Complete Metric Approximation Property]{Complete Metric Approximation Property For Mixed $q$-Deformed Araki-Woods Factors }

\author[Bikram]{Panchugopal Bikram}
\author[Mohanta]{Rajeeb Mohanta}
\author[Mukherjee]{Kunal Krishna Mukherjee}

\address{School of Mathematical Sciences, National Institute of Science Education and Research, Bhubaneswar, A CI of HBNI, Jatani- 752050, India}
\email{bikram@niser.ac.in, rajeeb.mohanta@niser.ac.in}
\address{Department of Mathematics, IIT Madras, Chennai - 600036, India.}
\email{kunal@iitm.ac.in}

\keywords{mixed q-deformed Araki-Woods factors, ultraproducts, complete metric approximation property}

\begin{abstract}
The main result of this paper is to establish the weak* completely metric approximation property (w*-CMAP) for the mixed $q$-deformed Araki-Woods factors for all symmetric matrices with entries $-1< q_{i,j}< 1$, using an ultraproduct embedding of the mixed $q$-deformed Araki-Woods factors.
\end{abstract}

\maketitle

\section{Introduction}
Among all approximation properties of $C^*$-algebras and von Neumann algebras, the strongest approximation property is the (weak*) completely positive approximation property (w*-CPAP). In the case of $C^*$-algebras, Effros and Choi showed that the completely positive approximation property is equivalent to nuclearity in ~\cite{EC78}. Connes, in his work on the classification of injective factors ~\cite{Co76}, showed that the weak* completely positive approximation property of von Neumann algebras is equivalent to injectivity. For non-injective von Neumann algebras, there are two approximation properties that arise as weak forms of the completely positive approximation property, namely the Haagerup property and the weak* completely (bounded) metric approximation property(w*-CMAP). In the group context, the amenability of a discrete group $G$ corresponds to the existence of an approximate identity in the Fourier algebra $A(G)$ consisting of finitely supported normalized positive definite functions. When the finite support assumption is relaxed, and the identity in $A(G)$ can be approximated by normalized positive definite functions that vanish at infinity, one gets the Haagerup property. When one can approximate the identity by some finitely supported functions, which are uniformly bounded in the completely bounded Fourier multiplier norm, the group is said to be weakly amenable. The (weak*) complete metric approximation property is the generalization of weak amenability in the operator algebraic setup. The study of finite approximation properties of operator algebras has provided many landmark results.

 For finite non-injective von Neumann algebras, in ~\cite{OP10} Ozawa and Popa showed that the w*-CMAP is intimately connected to several remarkable indecomposability results, such as strong solidity, solidity, absence of Cartan subalgebras, primeness, and so on. One of the important properties of von Neumann algebras studied recently is strong solidity, introduced by Ozawa and Popa ~\cite{OP10}. In the same paper, Ozawa and Popa showed that free-group factors are strongly solid. Among other finite non-injective von Neumann algebras, both q-Gaussian von Neumann algebras and mixed $q$-Gaussian von Neumann algebras have w*-CMAP and are strongly solid, shown in ~\cite{A11} and ~\cite{JZ15} respectively. In the literature, some finite von Neumann algebras coming from commutation relations given by Yang-Baxter operators are also studied. Some results related to the factoriality of such von Neumann algebras can be found in \cite{K06} and \cite{BKMMS21}. However, not much information on the approximation properties and indecomposability results of these von Neumann algebras is available so far.
 
 Recently, several advancements have been made regarding these indecomposability results in the case of type $\mathrm{III}$ von Neumann algebras. The study of indecomposability results of type $\mathrm{III}$ von Neumann algebras is known to be technically difficult and involved. However, it is fundamental to understand their structure and properties since they arise naturally and are still not completely understood.  In ~\cite{HR11}, Houdayer and Ricard showed that the free Araki-Woods algebras have the w*-CMAP, and these algebras do not have Cartan subalgebras. The $q$-Araki-Woods algebras were shown to have the w*-CMAP by  Avsec, Brannan, and Wasilewski in ~\cite{ABW18}. In ~\cite{BKM20},  the mixed $q$-deformed Araki-Woods von Neumann algebra is constructed out of operators satisfying canonical commutation relations deformed by a symmetric matrix. The mixed $q$-deformed Araki-Woods von Neumann algebras share many properties with the $q$-Araki-Woods von Neumann algebras. The same article ~\cite{BKM20} discusses many properties of the mixed $q$-deformed Araki-Woods algebras, such as factoriality, Haagerup property, etc. In ~\cite{BKM21}, the mixed $q$-deformed Araki-Woods von Neumann algebras are shown to be non-injective in most of the cases. It was in 2023 that the question of factoriality and non-injectivity of the mixed $q$-deformed Araki Woods von Neumann algebras was settled in full generality in two separate instances as in \cite{KSW23} and \cite{BKM23}. Henceforth, we refer to the mixed $q$-deformed Araki Woods von Neumann algebras as mixed $q$-deformed Araki Woods von Neumann factors. Hence, it is natural to ask whether these von Neumann algebras have the w*-CMAP. This article discusses the w*-CMAP of mixed $q$-deformed Araki-Woods von Neumann factors. The following is the main result of this article.
\begin{thm}
The mixed $q$-deformed Araki-Woods factor $\Gamma_T(\h_{\R},U_t)^{\prime\prime}$ has the w*-complete metric approximation property.
\end{thm}

We follow the approach of the free case by Houdayer and Ricard, which characterizes a natural class of completely bounded maps, called radial multipliers and uses their completely bounded norms. In order to estimate the completely bounded norm of the radial multipliers, we use the techniques of \cite{ABW18}.

This article is organized as follows. In Section 2, we recall some basics of operator space, the ultraproduct of von Neumann algebras, and the construction of the mixed $q$-deformed Araki-Woods von Neumann factors. In Section 3, we establish the ultraproduct embedding of the mixed $q$-deformed Araki-Woods von Neumann factors and use it to estimate the cb norm of the radial multipliers on the mixed $q$-deformed Araki-Woods von Neumann factors. Finally, in section 4, we provide the proof of our main theorem.

\section{Preliminary }

\subsection{Operator Spaces}
We briefly recall some notions of operator spaces that are crucial to the context of this article. An operator space is a  Banach space $X$ together with a sequence of norm $\set{\norm{\cdot}_n}_{n\geqslant 1}$ which satisfies Ruan's axiom. In the category of operator spaces, the important class of maps is the completely bounded maps. A linear map $\phi$ between two operator spaces $X$ and $Y$ is called completely bounded if it's completely bounded norm (cb norm), defined by 
\begin{align*}
\norm{\phi}_{cb}:= \sup_{n\in\N}\norm{\phi^{(n)}:M_n(X)\rightarrow M_n(Y)}
\end{align*}
is finite, where $\phi^{(n)}$ is the $n$-th amplification of $\phi$ defined by  $\phi^{(n)}((x_{i,j}))= (\phi(x_{i,j}))$ for $(x_{i,j})\in M_n(X)$ and $n\in\N$. An operator space is called homogeneous if every bounded linear map $\phi: X\rightarrow X$ is completely bounded and $\norm{\phi}_{cb} = \norm{\phi}$.

We use the column and row operator space structure of an abstract Hilbert space $H$ given by:
\begin{align*}
    H_c = \B(\C, H) \ \text{and} \ H_r = \B(\overline{H}, \C) 
\end{align*}
respectively, where $\overline{H}$ denotes the complex conjugate of the Hilbert space $H$. It follows from \cite[Section 3.4]{ER22} that the column  Hilbert space $H_c$ as well as the row Hilbert space $(H_r)$ are homogeneous. 
Let $X$ and $Y$ be two operator spaces. Let $x = (x_{ij}) \in M_n(X\otimes Y)$, $n\in\N.$ Define
\begin{align*}
    \norm{x}_{h,n}=\inf_{r\geqslant 1}\set{\norm{y_{ij}}_{M_{n,r}(X)}\norm{z_{ij}}_{M_{r,n}(Y)}: x_{ij}=\sum_{k=1}^r y_{ik}\otimes z_{kj}}
\end{align*}
The operator space tensor product defined by these norms is called the Haagerup tensor product of $X$ and $Y$ and is denoted as $X\otimes_{h}Y$.
\begin{rem}\label{isometric}
    Let $H$ and $K$ be two Hilbert space. For row and column Hilbert spaces we have the following classical isomorphisms ( \cite[Prop. 9.3.1, Prop 9.3.4]{ER22}):
\begin{enumerate}
    \item $H_c \otimes_{min} K_r \simeq H_c\otimes_{h} K_r$
    \item $H_c \otimes_h \overline{K}_r\simeq \mathcal{K}(K,H),$ where  $\mathcal{K}(K,H)$ denotes the set of all compact operators from $K$ to $H$. The complete isometry is given by the map $(\xi\otimes_h\eta)(k) = \inner{\eta}{k}\xi$, for $\xi\in H$ and $\eta, k\in K$.
\end{enumerate}

\end{rem}
We recall the following definition of   approximation properties of operator spaces that are central to this article. 
\begin{defn}

\begin{enumerate}
	\item 
An operator space $X$ is said to have the completely bounded approximation property (CBAP) if there exists a net $(\phi_i)_{i\in I}$ of finite rank completely bounded maps on $X$ such that\\
\begin{enumerate}
	\item 
$\sup_{i\in I}\norm{\phi_i}_{cb}<\infty$, and \\

\item $\underset{i}{\lim } \norm{\phi_i(x)-x}=0$ for every $x\in X$. \\
\end{enumerate} 
Additionally,  if  $\sup_{i\in I}\norm{\phi_i}_{cb}<1,$ then $X$ is said to have the complete metric approximation property (CMAP).\\

\item 
In case of the operator space $X$ is a dual of some operator space, then $X$ is said to have the $w^*$-complete metric approximation property ($w^*$-CMAP) if there exists a net $(\phi_i)_{i\in I}$ of finite rank $w^*$-continuous completely bounded maps on $X$ such that \\
\begin{enumerate}
	\item 

$\norm{\phi_i}_{cb}\leqslant 1$ for each $i\in I$, and\\
\item  $\underset{i}{\lim } ~\phi_i(x) =x$ in weak* topology for all $x\in X.$  
\end{enumerate} 
\end{enumerate}
\end{defn}

\subsection{Mixed $q$-deformed Araki-Woods von Neumann factors}
We recall the mixed $q$-deformed Araki-Woods von Neumann factors constructed in \cite{BKM20}. Let $\h_{\R}$ be a separable real Hilbert space and $\R\ni t\mapsto U_{t} $ be a strongly continuous orthogonal representation of $\R$ on $\h_{\R}$. We denote the complexification of $\h_{\R}$ by $\h_{\C}=\h_{\R}\otimes \C$.  Denote the inner product and norm on $\h_{\C}$ by $\inner{\cdot}{\cdot}_{\h_{\C}}$ and $\norm{\cdot}_{\h_{\C}}$ respectively. All inner products in this article are considered to be linear in the second variable.  Identify $\h_{\R}$ in $\h_{\C}$ by $\h_{\R}\otimes 1$. Since $\h_{\C}= \h_{\R}+i \h_{\R}$, as a real Hilbert space the inner product of $\h_{\R}$ in $\h_{\C}$ is given by $\mathfrak{R}\inner{\cdot}{\cdot}_{\h_\C}$.  We denote the complex conjugation on $\h_{\C}$ by $\J$, which is a bounded anti-linear operator.

We can extend the representation $\R\ni t \mapsto U_{t}$ from $ \h_{\R}$ to a strongly continuous one-parameter group of unitaries on $\h_{\C}$. Denote the extension again by $U_t$ for each $t$ with a slight abuse of notation. Let $A$ be the analytic generator of the strongly continuous one-parameter group $\lbrace U_{t}:t\in \R\rbrace$ acting on $\h_{\C}$, which is non-degenerate, positive, and self-adjoint.

With the help of the generator $A$, a new inner product $\langle \cdot,\cdot\rangle_{U}$ can be defined on $\h_{\C}$   as follows:
\begin{align*}
\inner{\xi}{\eta}_{U}= \inner{\frac{2}{1+A^{-1}}\xi}{\eta}_{\h_{\C}}, \text{ for }\xi,\eta \in \h_{\C}.
\end{align*}
Let $\h$ be the completion of $\h_{\C}$ with respect to $\inner{\cdot}{\cdot}_{U}$. We denote the inner product and norm on $\h$ by $\inner{\cdot}{\cdot}_U$ and $\norm{\cdot}_{U}$ respectively. Since $A$ is affiliated to $vN\{U_{t}:t \in \R\}$, we have
\begin{align*}
\inner{U_{t}\xi}{U_{t}\eta}_{U}=\inner{\xi}{\eta}_{U},\text{ for }\xi,\eta\in \h_{\C}.
\end{align*}
Hence, $(U_{t})_{t\in\R}$ extends to a strongly continuous unitary representation $(\widetilde{U_{t}})_{t\in\R}$ of $\R$ on $\h$. Let $\widetilde{A}$ be the analytic generator associated to $(\widetilde{U_{t}})_{t\in\R}$, which is clearly an extension of $A$. From \cite[Prop. 2.1]{BM17} and the discussion prior to it, it follows that the spectral data of $A$ and $\widetilde{A}$ are essentially the same. Therefore, we denote the extensions $(\widetilde{U_{t}})_{t\in\R}$ and $\widetilde{A}$ again by $(U_{t})_{t\in\R}$ and $A$ respectively with slight abuse of notation.

Let $N$ denote $\{1,2,\dots,r\}$, $r\in\N$, or  $\N$. We fix a decomposition of $\h_{\R}$ as follows:
\begin{align*}
\h_{\R}:=\bigoplus\limits_{i\in N}\h_{\R}^{(i)}, 
\end{align*}
where $\h_{\R}^{(i)}$, $i\in N$, are non-trivial invariant subspaces of $\lbrace U_{t}: t\in \R\rbrace$. Fix some real numbers $q_{ij}$ such that, $-1<q_{ij}=q_{ji}<1$ for $i,j\in N$ with $\sup\limits_{ i,j \in N} \abs{q_{ij}}<1 $. In this paper, we will often denote the scalar $q_{ij}$ also by $q(i,j)$ for $i,j\in N$.
Note that, $\h_{\C}=\bigoplus\limits_{i\in N}\h_{\C}^{(i)}$, where $\h_{\C}^{(i)}$ is the complexification of $\h_{\R}^{(i)}$ for $i\in N$. Also, since $\h_{\C}^{(i)}$, $i\in N$, are invariant for $\lbrace U_{t}: t\in \R\rbrace$, it follows that, $\h=\bigoplus\limits_{i\in N}\h^{(i)}$, where $\h^{(i)}$, $i\in N$, are  the completions of $\h_{\C}^{(i)}$, $i\in N$, with respect to $\inner{\cdot}{\cdot}_{U}$. \\
For fix $i,j\in N$, we define $T_{i,j}:\h_{\R}^{(i)}\otimes \h_{\R}^{(j)}\rightarrow \h_{\R}^{(j)}\otimes \h_{\R}^{(i)}$ to be the bounded extension of:
\begin{align*}
 \xi\otimes \eta\mapsto q_{ij} (\eta \otimes \xi), \text{ for }\xi\in \h_{\R}^{(i)},\eta \in \h_{\R}^{(j)}.
\end{align*}
Then, $T_\R:=\oplus_{i,j\in N} T_{i,j}\in \B(\h_\R\otimes\h_\R)$. We extend $T_\R$ to $\h_{\C}\otimes \h_{\C}$ linearly and denote the extension by $T_\C$.
By a simple density argument, it follows that $T_\C$ admits a unique bounded extension $T$ to $\h\otimes \h$. It is easy to verify that $T:=\oplus_{i,j\in N}T_{ij}$, where $T_{ij}:\h^{(i)}\otimes\h^{(j)}\rightarrow\h^{(j)}\otimes\h^{(i)}$ is defined as the extension of the map:
\begin{align}\label{T action}
\xi\otimes \eta\mapsto q_{ij} (\eta \otimes \xi), \text{ for }\xi\in \h^{(i)},\eta \in \h^{(j)}.
\end{align}
Moreover, $T$ has the following properties:
\begin{align}\label{Eq.T}
&T^{*}=T,\quad\quad\quad\quad\quad(\text{since } q_{ij}=q_{ji}, \text{ for }i,j\in N); \\
\nonumber&\norm T_{\h\otimes\h}< 1,\,\,\,\quad\quad (\text{since}\sup_{i,j\in   N} \abs{q_{ij}}<1);\\
\nonumber&(1\otimes T)(T\otimes 1)(1\otimes T)=(T\otimes1)(1\otimes T)(T\otimes 1),
\end{align}
where $1\otimes T \text{ and }T\otimes 1$ are the natural amplifications of $T$ to $\h\otimes \h\otimes\h$. The third relation listed in Eq. \eqref{Eq.T} is referred to as the Yang-Baxter equation.
Let $\F(\h):=\C\Omega\oplus\bigoplus\limits_{n=1}^{\infty}\h^{\otimes n}$ be the full Fock space of $\h$, where $\Omega$ is a distinguished unit vector $($vacuum vector$)$ in $\C$. By convention, $\h^{\otimes 0}:=\C\Omega$. The canonical inner product and norm on $\F(\h)$ will be denoted by $\inner{\cdot}{\cdot}_{\F(\h)}$ and $\norm{\cdot}_{\F(\h)}$ respectively. 
For $\xi\in \h$, let $a(\xi)$ and $a^{*}(\xi)$ denote the canonical left creation and annihilation operators acting on $\F(\h)$ which are defined as follows:
\begin{align}\label{gen op1}
&a(\xi)\Omega =\xi,\\
\nonumber& a(\xi)(\xi_{1}\otimes \xi_{2} \otimes\cdots \otimes \xi_{n})=\xi \otimes\xi_{1}\otimes\xi_{2}\otimes\cdots \otimes \xi_{n}\\ \text{ and},\ \ \
\nonumber& a^{*}(\xi)\Omega=0,\\
\nonumber&a^{*}(\xi)(\xi_{1}\otimes \xi_{2} \otimes\cdots \otimes \xi_{n})=\langle \xi ,\xi_{1}\rangle_{U} \xi_{2}\otimes\cdots \otimes \xi_{n},
\end{align}
where $\xi_{1}\otimes\cdots\otimes\xi_{n}\in\h^{\odot n}$ $(\h^{\odot n}$ denotes the $n$-fold algebraic tensor product of $\h)$ for $n\geq 1$. The operators $a(\xi)$ and $a^{*}(\xi)$ are bounded and adjoints of each other on $\F(\h)$.
Let $T_{i}$ be the operator acting on $\h^{\otimes(i+1)}$ for $i \in \N$ defined as follows:
\begin{align}\label{ExtndTDefn}
T_{i}:=\underset{i-1}{\underbrace{1\otimes\cdots\otimes 1}} \otimes T.
\end{align}
Extend $T_{i}$ to $\h^{\otimes n}$ for all $n>i+1$ by $T_{i}\otimes\underset{n-i-1}{\underbrace{1\otimes\cdots\otimes1}}$ and denote the extension again by $T_i$ with a slight abuse of notation.
Let $S_{n}$ denote the symmetric group of $n$ elements. Note that $S_1$ is trivial. For $n\geq 2$, let $\tau_{i}$ be the transposition between $i$ and $i+1$. It is  known that the set $\lbrace\tau_{i}\rbrace_{i=1}^{n-1}$  generates  $S_{n}$. 
For $n\in \N$, let $\pi:S_{n}\rightarrow\B(\h^{\otimes n})$ be the quasi-multiplicative map given by $\pi(1)=1$ and $\pi(\tau_{i})= T_{i}\text{ } (i=1,\dots,n-1)$. Consider $P^{(n)}\in\B(\h^{\otimes n})$, defined as follows:
\begin{align}\label{Twisted inner product}
P^{(n)}:=\underset{\sigma\in S_{n}}\sum \pi(\sigma).
\end{align}
By convention, $P^{(0)}$ on $\h^{\otimes0}$ is identity.
From the properties of $T$ in Eq. \eqref{Eq.T} and \cite[Theorem.  2.3]{BSp94}, it follows that $P^{(n)}$ is a strictly positive operator for every $n\in \N$. Following \cite{BSp94}, the association 
\begin{align}\label{New Fock inner pdt}
\inner{\xi}{\eta}_{T}=\delta_{n,m}\inner{\xi}{P^{(n)}\eta}_{\F(\h)},\text{ for }\xi\in \h^{\otimes m},\eta\in \h^{\otimes n},
\end{align}
defines a definite sesquilinear form on $\F(\h)$, and, let $\F_{T}(\h)$ denote the completion of $\F(\h)$ with respect to the norm on $\F(\h)$ induced by $\inner{\cdot}{\cdot}_T$. We denote the inner product and the norm on $\F_T(\h)$ by $\inner{\cdot}{\cdot}_T$ and $\norm{\cdot}_T$ respectively. We also denote $\F^{\text{finite}}_{T}(\h):=\text{ span }_\C \{\h^{\otimes n}, n\geq0\rbrace$ and $\h^{\otimes_T^n} =\overline{\h^{\otimes n}}^{\norm{\cdot}_T}$ for $n\in \N$.\\
Note that $\overline{\h}^{\norm{\cdot}_T}=\h$. Following \cite{BSp94}, for $\xi\in \h$, consider the $T$-deformed left creation and annihilation operators on $\mathcal{F}_T(\h)$ defined as follows: 
\begin{align}\label{gen op2}
l(\xi)&:=a(\xi),\text{ and, }\\
\nonumber l^{*}(\xi)&:=\begin{cases}a^{*}(\xi)(1+T_{1}+T_{1}T_{2}+\cdots +T_{1}T_{2}\cdots T_{n-1}), \text{ on }\h^{\otimes n};\\
0, \quad\text{ on }\C\Omega.\end{cases}
\end{align}
Then, $l(\xi)$ and $l^{*}(\xi)$ admit bounded extensions to $\mathcal{F}_T(\h)$ and we have:

\begin{prop}\label{bounded operators}
Let $\xi\in\h$. Then, the following hold:
\begin{enumerate}[label=(\roman*)]
\item If $\norm{T}_{\h\otimes\h} = q < 1$, then $\norm{l(\xi)}\leqslant\norm{\xi}_{U}(1-q)^{-\frac{1}{2}}$.
\item $l(\xi)$ and $l^{*}(\xi)$ are adjoints of each other on $\mathcal{F}_{T}(\h)$.
\end{enumerate}
\end{prop}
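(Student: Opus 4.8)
I would base both parts on the recursion for the Gram operators $P^{(n)}$ (see \cite{BSp94}): writing $\widehat R_{n}:=1+T_1+T_1T_2+\cdots+T_1T_2\cdots T_{n-1}\in\B(\h^{\otimes n})$ --- exactly the combination appearing in \eqref{gen op2} --- one has $P^{(n)}=(1\otimes P^{(n-1)})\widehat R_{n}$ for every $n\geqslant 1$, which is available in our setting precisely because $T$ obeys the Yang--Baxter relation from \eqref{Eq.T}; I shall also use the splitting $\widehat R_{n+1}=1+T_1(1\otimes\widehat R_{n})$. The boundedness of $l(\xi)$ on $\F_T(\h)$ with the sharp norm bound --- assertion (i) --- is the technical heart; granting it, assertion (ii) follows from a short computation with the same recursion, and then $l^*(\xi)$ is bounded for free, being $l(\xi)^*$.

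\emph{Part (i).} Since $l(\xi)=a(\xi)$ carries $\h^{\otimes n}$ into $\h^{\otimes(n+1)}$, it suffices to bound $\norm{l(\xi)\eta}_T$ uniformly in $n$ for $\eta\in\h^{\otimes n}$. I would expand $\norm{l(\xi)\eta}_T^{2}=\inner{\xi\otimes\eta}{P^{(n+1)}(\xi\otimes\eta)}_{\F(\h)}$ through $P^{(n+1)}=(1\otimes P^{(n)})+(1\otimes P^{(n)})T_1(1\otimes\widehat R_{n})$: the first term contributes exactly $\norm{\xi}_U^{2}\norm{\eta}_T^{2}$, while evaluating the first tensor leg against $\xi$ rewrites the second term as $\inner{\eta}{P^{(n)}(D_\xi\otimes 1)\widehat R_{n}\,\eta}_{\F(\h)}$, where $D_\xi:=(\inner{\xi}{\cdot}_U\otimes\id_\h)\circ T(\xi\otimes\cdot)\in\B(\h)$. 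Since $T=\bigoplus_{i,j}T_{i,j}$ acts as in \eqref{T action} and the blocks $\h^{(i)}$ are mutually $\inner{\cdot}{\cdot}_U$-orthogonal, one computes $D_\xi=\sum_{i,j}q_{i,j}\,\xi_i\,\inner{\xi_j}{\cdot}_U$ for the block decomposition $\xi=\sum_i\xi_i$, so $\norm{D_\xi}\leqslant\bigl(\sup_{i,j}\abs{q_{i,j}}\bigr)\norm{\xi}_U^{2}=q\,\norm{\xi}_U^{2}$ by Cauchy--Schwarz. Thus everything reduces to the operator estimate
\begin{align}\label{crux}
P^{(n)}(D_\xi\otimes 1)\widehat R_{n}\ \leqslant\ \frac{q}{1-q}\,\norm{\xi}_U^{2}\,P^{(n)}\qquad(n\geqslant 1),
\end{align}
between self-adjoint operators on $(\h^{\otimes n},\inner{\cdot}{\cdot}_{\F(\h)})$ --- equivalently, using strict positivity of $P^{(n-1)}$ and $\widehat R_{n}=(1\otimes P^{(n-1)})^{-1}P^{(n)}$, to $\norm{(P^{(n)})^{1/2}(D_\xi\otimes 1)(1\otimes P^{(n-1)})^{-1}(P^{(n)})^{1/2}}\leqslant\frac{q}{1-q}\norm{\xi}_U^{2}$ --- because then the contributions sum to $\norm{l(\xi)\eta}_T^{2}\leqslant\bigl(1+\tfrac{q}{1-q}\bigr)\norm{\xi}_U^{2}\norm{\eta}_T^{2}=(1-q)^{-1}\norm{\xi}_U^{2}\norm{\eta}_T^{2}$, which is (i).

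Proving \eqref{crux} is the main obstacle. I would argue by induction on $n$: feeding in $\widehat R_{n}=1+T_1(1\otimes\widehat R_{n-1})$ separates the left-hand side into a ``diagonal'' part, handled directly, and an error term governed by $T_1$, which one controls using $\norm{T}=q$, the positivity and self-adjointness of the $P^{(k)}$, and the inductive hypothesis at level $n-1$. This is exactly the style of estimate carried out for the single-parameter $q$-Fock space in \cite{BSp94} and, in the Araki--Woods setting, in \cite{ABW18}; it transfers to the mixed case essentially verbatim once the single deformation parameter is replaced throughout by $q=\sup_{i,j}\abs{q_{i,j}}$, the Yang--Baxter relation being the only structural input. (The crude bounds $\norm{P^{(n)}}\leqslant\norm{P^{(n-1)}}/(1-q)$ and $\norm{\widehat R_{n}}\leqslant(1-q)^{-1}$ are by themselves too lossy, which is why the finer recursion is needed.)

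\emph{Part (ii).} With $l(\xi)$ now bounded on $\F_T(\h)$, it remains to identify $l^*(\xi)$ from \eqref{gen op2} as its $\inner{\cdot}{\cdot}_T$-adjoint, i.e.\ to verify $\inner{l(\xi)\zeta}{\eta}_T=\inner{\zeta}{l^*(\xi)\eta}_T$ for homogeneous $\zeta\in\h^{\otimes n}$ and $\eta\in\h^{\otimes(n+1)}$ (both sides vanish by \eqref{New Fock inner pdt} unless the tensor degrees agree, and such vectors span a dense subspace). Unravelling the left side, $\inner{l(\xi)\zeta}{\eta}_T=\inner{\xi\otimes\zeta}{P^{(n+1)}\eta}_{\F(\h)}=\inner{\xi\otimes P^{(n)}\zeta}{\widehat R_{n+1}\eta}_{\F(\h)}$ by the recursion and self-adjointness of $P^{(n)}$; since $a(\xi)$ and $a^*(\xi)$ are mutually adjoint on the undeformed Fock space, this equals $\inner{P^{(n)}\zeta}{a^*(\xi)\widehat R_{n+1}\eta}_{\F(\h)}=\inner{P^{(n)}\zeta}{l^*(\xi)\eta}_{\F(\h)}$, which in turn equals $\inner{\zeta}{l^*(\xi)\eta}_T$ because $l^*(\xi)\eta\in\h^{\otimes n}$ and $P^{(n)}=(P^{(n)})^{*}$. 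Hence $l^*(\xi)=l(\xi)^*$, so $l^*(\xi)$ extends to a bounded operator on $\F_T(\h)$, as stated before the proposition.
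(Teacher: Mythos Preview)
Your argument for (ii) is clean and correct, and since the paper's own proof is simply a pointer to \cite[Theorem~3.1]{BSp94}, your overall proposal is in the same spirit as the paper.

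For (i), however, your route is more circuitous than necessary, and your parenthetical remark is misleading. You dismiss the bound $\norm{\widehat R_{n+1}}\leqslant(1-q)^{-1}$ as ``too lossy,'' but in fact it yields the result directly and is the heart of the Bo\.zejko--Speicher argument. The operator $(1\otimes P^{(n)})^{-1/2}P^{(n+1)}(1\otimes P^{(n)})^{-1/2}$ is self-adjoint, positive (since $P^{(n+1)}\geqslant0$), and \emph{similar} to $\widehat R_{n+1}$ via conjugation by $(1\otimes P^{(n)})^{1/2}$; hence its operator norm equals its spectral radius, which coincides with the spectral radius of $\widehat R_{n+1}$ and is therefore at most $\norm{\widehat R_{n+1}}\leqslant\sum_{k=0}^{n}q^{k}<(1-q)^{-1}$. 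This gives the operator inequality $P^{(n+1)}\leqslant(1-q)^{-1}(1\otimes P^{(n)})$ in one stroke, and testing it on $\xi\otimes\eta$ yields $\norm{l(\xi)\eta}_T^{2}\leqslant(1-q)^{-1}\norm{\xi}_U^{2}\norm{\eta}_T^{2}$ immediately. No induction on $n$, no auxiliary operator $D_\xi$, and no separate key inequality are required. Your detour through $D_\xi$ is correct as far as it goes---your displayed inequality is a true statement, being precisely the restriction of $P^{(n+1)}\leqslant(1-q)^{-1}(1\otimes P^{(n)})$ to vectors of the form $\xi\otimes\eta$---but the inductive scheme you sketch for it is harder to make precise (the individual summands after splitting $\widehat R_n$ are not self-adjoint, so the induction does not close in the obvious way) and, more to the point, unnecessary.
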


\begin{proof}
The proof follows exactly along the same lines of  \cite[Theorem. 3.1]{BSp94}. We omit the details.
\end{proof}

Following Eq. \eqref{gen op2}, the definition of $l^{*}(\xi)$ involves the operator $T$. It follows from Eq. \eqref{T action} that, the action of $l^{*}(\xi)$ on $\mathcal{F}_{T}(\h)$ is  as follows. Fix $n\in \N$, and, $\xi_{i_{k}}\in\h^{(i_{k})}$ for $i_{k}\in N$ and $1\leqslant k\leqslant n$. Then,
\begin{align}\label{left annihilation}
&l^{*}(\xi)\Omega=0, \text{ and, }\\ 
\nonumber&l^{*}(\xi)(\xi_{i_{1}}\otimes\cdots\otimes \xi_{i_{n}})=\underset{k=1}{ \overset{n}\sum}\langle \xi,\xi_{i_{k}}\rangle_{U}q_{i_{k}i_{k-1}}\cdots q_{i_{k}i_{1}}(\xi_{i_{1}}\otimes \xi_{i_{2}}\otimes\cdots\\
\nonumber&\quad\quad\quad\quad\quad\quad\quad\quad\quad\quad\quad\quad\quad\quad\quad\cdots \otimes \xi_{i_{k-1}}\otimes
\xi_{i_{k+1}}\otimes\cdots\otimes \xi_{i_{n}}).
\end{align}

Define 
\begin{align}\label{genop}
s(\xi):= l(\xi)+l^*(\xi), \text{ for }\xi \in \h_{\R}.
\end{align}

We denote the $C^*$-algebra generated by the self-adjoint operators $\{s(\xi):\xi\in\h_\R\}$ as $\Gamma_{T}(\h_{\R},U_{t})$ and the associated von Neumann algebra as $\Gamma_{T}(\h_{\R},U_{t})^{\prime\prime}\subseteq \mathbf{B}(\mathcal{F}_T(\h))$.  The von Neumann algebra  $\Gamma_{T}(\h_{\R},U_{t})^{\prime\prime}$ is called  as  the mixed $q$-deformed Araki-Woods von Neumann factor. This von Neumann algebra is equipped with a vacuum state $\varphi$ given by $\varphi(\cdot)=\inner{\Omega}{\cdot\Omega}_T$.

 For any simple tensor $\xi_{1}\otimes \cdots \otimes\xi_{n}\in \h_{\C}^{\otimes n}$, there exists a unique operator $W(\xi_{1}\otimes \cdots \otimes\xi_{n})$ in $\Gamma_T(\h_{\R},U_t)^{\prime\prime}$ such that $W(\xi_{1}\otimes \cdots \otimes\xi_{n})\Omega = \xi_{1}\otimes \cdots \otimes\xi_{n}$; these operators are called Wick words. In \cite{BKM20}, the following explicit formula for the Wick product is given, which will be used frequently in the remaining parts of the paper.

\begin{lem}\label{wick pdt formula}
Fix $n\in \N$, and let $\xi_{t_{k}}\in\h_\C^{(t_{k})}$ for $1\leqslant t_{k}\leqslant N$, and $1\leqslant k\leqslant n$,
\begin{align*}
 W&(\xi_{t_1}\otimes \cdots \otimes\xi_{t_n})
\\&=\sum_{\substack{ l,m\geq0 \\ l+m=n}}\sum_{\substack{I=\lbrace i(1),\dots ,i(l) \rbrace,\, i(1)< \cdots < i(l)\\ J=\lbrace j(1)\cdots j(m)\rbrace,\,j(1)< \cdots < j(m)\\ I\cup J=\lbrace 1,\cdots ,n\rbrace\\ I\cap J=\emptyset}}\!\!\!\!\!\!\!\! f_{(I,J)}(q_{ij})l(\xi_{t_{i(1)}})\cdots l(\xi_{t_{i(l)}})l^*(\J\xi_{t_{j(1)}})\cdots l^*(\J\xi_{t_{j(m)}}),
\end{align*}
where $f_{(I,J)}(q_{ij})=\underset{\lbrace (r,s):\text{ }1\leqslant r\leqslant l,1\leqslant s\leqslant m, i(r)>j(s)\rbrace}\prod  q_{t_{i(r)},t_{j(s)}}$.
\end{lem}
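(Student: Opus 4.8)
Write $R(\zeta)$ for the operator appearing on the right-hand side of the asserted identity when the argument is the simple tensor $\zeta$, and abbreviate $R_n=R(\xi_{t_1}\otimes\cdots\otimes\xi_{t_n})$ and $R_{n-1}=R(\xi_{t_2}\otimes\cdots\otimes\xi_{t_n})$. Recall that $\Omega$ is separating for $\Gamma_T(\h_\R,U_t)''$, so by the stated uniqueness an element of the algebra is determined by its action on $\Omega$; thus it suffices to show $R_n=W(\xi_{t_1}\otimes\cdots\otimes\xi_{t_n})$ as operators. The plan is an induction on $n$. The base case $n=1$ is immediate: the two partitions of $\{1\}$ give $R_1=l(\xi_{t_1})+l^*(\J\xi_{t_1})=W(\xi_{t_1})$ (which for $\xi_{t_1}\in\h_\R$ is $s(\xi_{t_1})$), since both sides agree on $\Omega$.

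The first ingredient I would record is the deformed commutation relation. From $l(\xi)=a(\xi)$ and the explicit action of $l^*$ in \eqref{left annihilation}, a direct computation gives, for $\xi\in\h^{(i)}$ and $\eta\in\h^{(j)}$,
\begin{align*}
l^*(\eta)\,l(\xi)=\inner{\eta}{\xi}_U\,\mathrm{id}+q_{ij}\,l(\xi)\,l^*(\eta).\tag{$\dagger$}
\end{align*}
Iterating $(\dagger)$ to move one annihilation operator rightward past a string $l(\xi_{t_{i(1)}})\cdots l(\xi_{t_{i(l)}})$ produces a fully commuted term carrying the weight $\prod_{r=1}^{l}q_{j,\,t_{i(r)}}$, together with contraction terms; the $r$-th contraction deletes $l(\xi_{t_{i(r)}})$, contributes the scalar $\inner{\eta}{\xi_{t_{i(r)}}}_U$, and carries the partial weight $\prod_{r'<r}q_{j,\,t_{i(r')}}$.

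The second ingredient is a Wick recursion. Since $W(\xi_{t_1})$ and $W(\xi_{t_2}\otimes\cdots\otimes\xi_{t_n})$ lie in the algebra, so does their product; evaluating it on $\Omega$ by means of $W(\zeta)\Omega=\zeta$ and the action of $l^*(\J\xi_{t_1})$ in \eqref{left annihilation}, and then invoking that $\Omega$ is separating, yields
\begin{align*}
W(\xi_{t_1}\otimes\cdots\otimes\xi_{t_n})&=W(\xi_{t_1})\,W(\xi_{t_2}\otimes\cdots\otimes\xi_{t_n})\\
&\quad-\sum_{k=2}^{n}\inner{\J\xi_{t_1}}{\xi_{t_k}}_U\Big(\prod_{j=2}^{k-1}q_{t_k t_j}\Big)\,W(\xi_{t_2}\otimes\cdots\otimes\widehat{\xi_{t_k}}\otimes\cdots\otimes\xi_{t_n}).\tag{$\star$}
\end{align*}
By the induction hypothesis every $W$ on the right of $(\star)$ equals the corresponding $R$. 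Substituting $W(\xi_{t_1})=l(\xi_{t_1})+l^*(\J\xi_{t_1})$, the creation part $l(\xi_{t_1})R_{n-1}$ reproduces precisely those summands of $R_n$ in which $1$ lies in the creation index set $I$ (being smallest, the index $1$ adds no $q$-weight). For the annihilation part $l^*(\J\xi_{t_1})R_{n-1}$ I would apply the iterated form of $(\dagger)$ to each summand: its fully commuted term places $l^*(\J\xi_{t_1})$ just to the right of the creation operators with weight $\prod_{r}q_{t_1,\,t_{i(r)}}$, which is exactly the factor distinguishing $f_{(I,J)}$ with $1\in J$ from $f_{(I,\,J\setminus\{1\})}$, thereby producing the summands of $R_n$ with $1\in J$.

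What remains, and is the main obstacle, is the cancellation of the contraction terms against the correction sum in $(\star)$. When $l^*(\J\xi_{t_1})$ contracts with $\xi_{t_{i(r)}}=\xi_{t_k}$, nonvanishing of $\inner{\J\xi_{t_1}}{\xi_{t_k}}_U$ forces $t_1=t_k$ (the subspaces $\h^{(i)}$ are mutually orthogonal and $\J$-invariant), so the commutation weight $\prod_{r'<r}q_{t_1,\,t_{i(r')}}$ equals $\prod_{i'\in I,\,i'<k}q_{t_k,\,t_{i'}}$. One must then verify the purely combinatorial identity that this partial weight, multiplied by the weight of the ambient partition of $\{2,\dots,n\}$, reorganizes—after deletion of the index $k$—into $\big(\prod_{j=2}^{k-1}q_{t_k t_j}\big)$ times the weight of the induced partition of $\{2,\dots,n\}\setminus\{k\}$ that occurs in $R(\xi_{t_2}\otimes\cdots\widehat{\xi_{t_k}}\cdots\otimes\xi_{t_n})$. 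The symmetry $q_{ij}=q_{ji}$ and the grading of $\h$ into the invariant subspaces $\h^{(i)}$ are used throughout, and the overall consistency of the weights $f_{(I,J)}$ ultimately rests on the Yang--Baxter relation recorded in \eqref{Eq.T}.
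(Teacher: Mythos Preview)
The paper does not prove this lemma; it is quoted verbatim from \cite{BKM20} and used as a black box. So there is no ``paper's own proof'' to compare your proposal against, and your write-up is in effect supplying an argument the present paper omits.

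Your inductive scheme via the recursion $(\star)$ and the commutation relation $(\dagger)$ is correct and is the standard route. Two comments. First, the combinatorial identity you flag as the ``main obstacle'' is short once written out, and you should actually close it rather than leave it as ``one must then verify''. With $k=i(r)\in I$ one has
\[
f_{(I,J)}=f_{(I\setminus\{k\},\,J)}\cdot\!\!\prod_{j'\in J,\;j'<k}\! q_{t_k,t_{j'}},
\]
while the commutation weight is $\prod_{i'\in I,\;i'<k} q_{t_1,t_{i'}}$. Your observation that nonvanishing of $\langle\J\xi_{t_1},\xi_{t_k}\rangle_U$ forces $t_1=t_k$ is exactly what is needed to convert the latter into $\prod_{i'\in I,\;i'<k} q_{t_k,t_{i'}}$; multiplying the two products and using $I\cup J=\{2,\dots,n\}$ then gives $\prod_{j=2}^{k-1}q_{t_k,t_j}$, matching the coefficient in $(\star)$. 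So the cancellation is genuine, but it hinges on the orthogonality of the $\h^{(i)}$ and not merely on bookkeeping---without $t_1=t_k$ the weights would not match.

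Second, your closing appeal to the Yang--Baxter relation in \eqref{Eq.T} is misplaced. Yang--Baxter is what makes the quasi-multiplicative map $\pi$ and hence $P^{(n)}$ well defined, but the identity you are proving is pure algebra in the creation/annihilation operators together with the symmetry $q_{ij}=q_{ji}$; no braid relation is invoked in the induction. Dropping that sentence would tighten the argument.
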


Now, we fix some notations. Fix $n\in\N$.  For $0\leqslant k\leqslant n $, let $\Pi_{n,k}:=\set{J:=(j(1),\ldots ,j(k)): 1\leqslant j(1)< \cdots <j(k)\leqslant n}$. For each  $J\in\Pi_{n,k}$,  denote the complement of $J$ in $\set{1,\ldots,n}$ by $J^c:=(i(1),\ldots,i(n-k)).$ where $ 1\leqslant i(1)< \cdots <i(n-k)\leqslant n$. 
Let $\eta_{t_k}\in\h^{(t_k)}$ for $t_k\in N,1\leqslant k\leqslant n.$
Now, for $\eta=(\eta_{t_1},\ldots,\eta_{t_n})\in\h^{(t_1)}\times\cdots\times\h^{(t_n)}$ and $J\in\Pi_{n,k}$, we set 
\begin{align*}
&l(\eta):=l(\eta_{t_1})\cdots l(\eta_{t_n}),\ \ \ l^*(\eta):=l^*(\eta_{t_1})\cdots l^*(\eta_{t_n}),\\
&\eta_{t_J}:=(\eta_{t_{j(1)}},\ldots,\eta_{t_{j(k)}})\in\h^{(t_{j(1)})}\times\cdots\times\h^{(t_{j(k)})},\\
&\eta_{t_{J^c}}:=(\eta_{t_{i(1)}},\ldots,\eta_{t_{i(n-k)}})\in\h^{(t_{i(1)})}\times\cdots\times\h^{(t_{i(n-k)})} , 
\end{align*}
hence,
\begin{align*}
l(\eta_{t_J})=l(\eta_{t_{j(1)}})\cdots l(\eta_{t_{j(k)}}),\ \ \ l^*(\eta_{t_{J^c}})=l^*(\eta_{t_{i(1)}})\cdots l^*(\eta_{t_{i(n-k)}}) .
\end{align*}
We write $\eta_{t_J}^{\otimes k} := \eta_{t_{j(1)}}\otimes\cdots\otimes\eta_{t_{j(k)}}$ and $\eta_{t_{J^c}}^{\otimes (n-k)}:=\eta_{t_{i(1)}}\otimes\cdots\otimes\eta_{t_{i(n-k)}}$.
For $n\in\N$, we denote $\J^n\eta:=\J\eta_{t_1}\otimes\J\eta_{t_2}\otimes\cdots\otimes\J\eta_{t_n}.$

Using these notations, we can now rewrite the wick product formula \ref{wick pdt formula} as follows:
\begin{align}\label{wick pdt2}
W(\eta_{t_1}\otimes \cdots \otimes\eta_{t_n}) =\displaystyle\sum^n_{k=0}\sum_{J\in\Pi_{n,k}} f_{(J^{c},J)}(q_{i,j})l(\eta^{\otimes(n-k) }_{t_{J^{c}}})l^{*}(\J^k\eta^{\otimes k}_{t_J}),
\end{align}
where $f_{(J^c,J)}(q_{ij})=\underset{\lbrace (r,s):\text{ }1\leqslant r\leqslant n-k,1\leqslant s\leqslant k\text{ } i(r)>j(s)\rbrace}\prod  q_{t_{i(r)},t_{j(s)}}$.

Let  $\Gamma^n_T(\h_{\R}, U_t)$ be the subspace of $\Gamma_T(\h_{\R}, U_t)^{\prime\prime}$ spanned by the set of all Wick product of length `n' i.e. the set $\set{W(\xi): \xi\in \h_{\C}^{\otimes n}}$. Denote $\widetilde{\Gamma}_T(\h_{\R},U_t)$ as the linear span of $\{\Gamma^n_T(\h_{\R},U_t)\}_{n\in\N\cup\set{0}}$. Notice that, $\widetilde{\Gamma}_T(\h_{\R},U_t)$ is a w*-dense $*$-subalgebra of $\Gamma_T(\h_{\R},U_t)^{\prime\prime}$ and  $\Gamma_T(\h_{\R},U_t)$ is the C*-completion of $\widetilde{\Gamma}_T(\h_{\R},U_t)$.

\begin{rem}\label{OPP}
The operator $P^{(n)}$ has the following natural decomposition because of the decomposition of $S_n$ into product of $S_{n-k} \times S_{k}$ and $S_n/(S_{n-k} \times S_{k})$ for $0 \leqslant k \leqslant n$ [e.g. Proposition 2.3,~\cite{BKM21}]:
\begin{align}\label{Positive op}
P^{(n)} = (P^{(n-k)} \otimes P^{(k)})R^*_{n-k,k}, \ \ \text{where} \ \ R^*_{n-k,k} = \displaystyle\sum_{\sigma\in S_n/(S_{n-k} \times S_{k})} \pi(\sigma).
\end{align}
Then, by \cite[Lemma IV.3]{BM21} we have the explicit formula for $R^*_{n-k,k}$, given by
\begin{align}\label{Op R}
R^*_{n-k,k}(\eta_{t_1}\otimes\cdots& \otimes\eta_{t_n})= \displaystyle\sum_{J\in\Pi_{n,k}} f_{(J^c,J)}(q_{i,j} )(\eta_{t_{J^c}}^{\otimes(n-k)}\otimes\eta_{t_J}^{\otimes k}), 
\end{align}
where $ \eta=\eta_{t_1}\otimes\cdots\otimes\eta_{t_n}\in\h^{\otimes n}$.\\
In fact, whenever we have a partition $n= n_1+\cdots +n_k$, there exist a unique operator on $\h^{\otimes n}$ such that $P^{(n)} = (P^{(n_1)} \otimes \cdots\otimes P^{(n_k)})R^*_{n_1,n_2,\ldots,n_k}$, which is nothing but the adjoint of the identity map $R_{n_1,n_2,\ldots,n_k} : \h^{\otimes_T ^{n_1}}\otimes\cdots\otimes  \h^{\otimes_T ^{n_k}}\rightarrow \h^{\otimes_T^ n}$. For $n,k,l\in\N$ we have 
\begin{align}\label{rel R}
R^*_{n,k,l}= (Id_n\otimes R^*_{k,l})R^*_{n,k+l} = (R^*_{n,k}\otimes Id_l)R^*_{n+k,l}.
\end{align}
This will follow from the equalities $R_{n,k,l}=R_{n,k+l} (Id_n\otimes R_{k,l}) = R_{n+k,l}(R_{n,k}\otimes Id_l).$
\end{rem}

We fix some more notations here. Given $l,m\in\N$, we can consider multi-indices  $k=(k_1,k_2,\ldots,k_l)\in\set{1,\ldots,m}^l$ as functions $k:\set{1,\ldots,l}\rightarrow \set{1,\ldots,m}.$ For $l\in\N$, we denote by $P(l)$ the set of partitions of the  set $\set{1,\ldots,l}$, which forms a lattice.  In $P(l)$, we have a partial order given by the refinement of partitions, i.e., for $\nu, \nu'\in P(l)$,  $\nu\leqslant\nu'$ iff every element of $\nu$ is a subset of some element in $\nu'$ and $\nu\vee\nu'$ denotes the lattice theoretic join of $\nu$ and $\nu'$. We denote by $\abs{\nu}$ the number of blocks in the partition $\nu$.  For a given multi-index (function)  $k:\set{1,\ldots,l}\rightarrow \set{1,\ldots,m}$, we get a partition in $P(l)$ denoted by ker$ k$ as: $1\leqslant r, s \leqslant l$ belongs to the same block in ker$k$ iff $k_r=k_s$. Let $P_2(l)\subset P(l)$ be the set of all pair partitions of  $\set{1,\ldots,l}$. For $\nu\in P_2(l)$, we denote by $i(\nu)$ the number of crossings in the partition $\nu=\set{(i(r),j(r)):\ 1\leqslant r\leqslant l/2\ \text{and}\ 1\leqslant i(r)< j(r)\leqslant l}$, i.e. the number of pair $(r,s) $ such that $i(r)<i(s)<j(r)<j(s)$. For convenience, we use the notation $[l]$ for the set $\set{1,\ldots,l},$ whenever $l\in\N.$

We have the following formula for the value of $\varphi$ at $W(\xi_{t_1}),\ldots, W(\xi_{t_l})$, for $\xi_{t_i}\in\h^{(t_i)}_{\R}$, from Lemma 3.10 ~\cite{BKM20}:
\[
\varphi(W(\xi_{t_1})\cdots W(\xi_{t_l}))=
\begin{cases}
0 & \text{if $l$ is odd}\\
 \displaystyle\sum_{\nu\in P_2(l)} g_{\nu}(q_{i,j})\Pi_{r=1}^{l/2}\inner{\xi_{t_{i(r)}}}{\xi_{t_{j(r)}}}_U & \text{if $l$ is even},
 \end{cases}
\]
where the summation is over all pair partitions $\nu = \set{(i(r), j(r))}_{1\leqslant r\leqslant l/2}$ of $\set{1, \ldots , l}$ with $i(r) < j(r),$ and $g_{\nu}(q_{ij})$ is given by 
\begin{align*}
g_{\nu}(q_{ij})=\underset{\lbrace (r,s):\text{ }1\leqslant r, s\leqslant l/2, i(r)<i(s)<j(r)<j(s)\rbrace}\prod  q_{t_{i_r},t_{j_s}}.
\end{align*}
Notice that, if  $q_{ij}=q$ for all $i,j\in N$, then $g_{\nu}(q_{ij})=q^{i(\nu)},$ where $i(\nu)=\#\set{ (r,s):\text{ }1\leqslant r,s\leqslant l/2, i(r)<i(s)<j(r)<j(s)}.$

\subsection{Modular theory}
As usual in the Modular theory, let $S_{\varphi}$ be the closure of the operator $x\Omega\rightarrow x^{*}\Omega, x\in \Gamma_{T}(\h_{\R}, U_{t})^{\prime \prime}$, and let $\Delta_{\varphi}, J_{\varphi}$ be the associated modular operator and the modular conjugation respectively. The following formulas are proved in \cite{BKM20}. 
\begin{lem}\label{Modular}
For each $n\geqslant 1$, the following assertions hold:
\begin{enumerate}[label=(\roman*)]
\item $S_{\varphi}(\eta_{1}\otimes\eta_{2}\cdots \otimes\eta_{n})= \eta_{n}\otimes \eta_{{n-1}}\otimes\cdots \otimes \eta_{1} \text{ for }\eta_{1},\dots,\eta_{n}\in \mathcal{H}_{\mathbb{R}}. $
\item $\Delta_{\varphi}(\eta_{1}\otimes\eta_{2}\cdots \otimes\eta_{n})= (A^{-1}\eta_{1})\otimes\cdots\otimes (A^{-1}\eta_{n})$ if $\eta_{1},\dots,\eta_{n}\in \mathcal{H}_{\mathbb{R}}\cap\mathfrak{D}( A^{-1}).$
\item $\Delta_{\varphi}$ restricted on $(\mathcal{H}^{\otimes n},\langle\cdot,\cdot\rangle_{T})$ is the closure of $(A^{-1})^{\otimes n}$ with respect to $\langle \cdot,\cdot\rangle_{T}$.
\item $J_{\varphi}(\eta_{1}\otimes\cdots\otimes\eta_{n})=(A^{-1/2}\eta_{n})\otimes\cdots\otimes(A^{-1/2}\eta_{1}) \text{ if }\eta_{1},\dots,\eta_{n}\in\mathcal{H}_{\mathbb{R}}\cap\mathfrak{D}(A^{-1/2}).$
\end{enumerate}
\end{lem}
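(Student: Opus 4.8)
The plan is to compute the Tomita operator $S_\varphi$ explicitly on Wick vectors and then read off $\Delta_\varphi$ and $J_\varphi$ from it. Write $M:=\Gamma_T(\h_\R,U_t)^{\prime\prime}$. I take for granted two facts: $\Omega$ is cyclic for $M$ (built into the construction, since $M\Omega\supseteq\widetilde{\Gamma}_T(\h_\R,U_t)\Omega\supseteq\h_\C^{\otimes n}$ for every $n$, whose span is dense in $\F_T(\h)$), and $\Omega$ is separating for $M$, equivalently cyclic for $M'$ — this follows by running the same Fock construction with the \emph{right} creation operators $r(\xi)\colon\zeta\mapsto\zeta\otimes\xi$ and their $T$-deformed adjoints, whose generated von Neumann algebra lies in $M'$ by the Yang--Baxter relation in \eqref{Eq.T}. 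Granting these, $S_\varphi$ is the closure of $x\Omega\mapsto x^{*}\Omega$ with core $\widetilde{\Gamma}_T(\h_\R,U_t)\Omega=\text{span}_\C\{\h_\C^{\otimes n}:n\geqslant 0\}$, so the whole lemma is determined by the action of $S_\varphi$ on the Wick vectors $W(\xi_1\otimes\cdots\otimes\xi_n)\Omega=\xi_1\otimes\cdots\otimes\xi_n$.

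\textbf{Step 1 (the operator $S_\varphi$, hence (i)).} I would first establish
\[
W(\xi_{t_1}\otimes\cdots\otimes\xi_{t_n})^{*}=W(\J\xi_{t_n}\otimes\cdots\otimes\J\xi_{t_1}),\qquad \xi_{t_k}\in\h_\C^{(t_k)}.
\]
To do so, take the Wick expansion of $W(\xi_{t_1}\otimes\cdots\otimes\xi_{t_n})$ from Lemma \ref{wick pdt formula}, apply $*$ termwise using $l(\xi)^{*}=l^{*}(\xi)$, $l^{*}(\J\xi)^{*}=l(\J\xi)$ (Proposition \ref{bounded operators}) and the reality of the coefficients $f_{(I,J)}(q_{ij})$, reverse each operator product, and then re-index the resulting sum by the reflected index sets; the symmetry $q_{ij}=q_{ji}$ makes the coefficients match, so the sum is exactly the Wick expansion of $\J\xi_{t_n}\otimes\cdots\otimes\J\xi_{t_1}$. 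Evaluating both sides at $\Omega$ gives $S_\varphi(\xi_1\otimes\cdots\otimes\xi_n)=\J\xi_n\otimes\cdots\otimes\J\xi_1$ on $\h_\C^{\otimes n}$, and since $\J\eta=\eta$ for $\eta\in\h_\R$ this is precisely (i). (One can bypass the Wick combinatorics: $s(\eta_1)\cdots s(\eta_n)\Omega=\eta_1\otimes\cdots\otimes\eta_n$ plus vectors of degree $<n$, its adjoint is $s(\eta_n)\cdots s(\eta_1)$, and the leading tensor is peeled off by induction on $n$ using that $S_\varphi$ is closed and conjugate-linear.)

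\textbf{Step 2 ((ii), (iii), (iv)).} Since $S_\varphi$, $S_\varphi^{*}$ and $\Delta_\varphi$ preserve the grading $\h^{\otimes_T^n}$, it suffices to work degree by degree. On degree $1$, Step 1 says $S_\varphi$ is the conjugate-linear map $\xi\mapsto\J\xi$ on $(\h_\C,\inner{\cdot}{\cdot}_U)$; using $\inner{\xi}{\eta}_U=\inner{\frac{2}{1+A^{-1}}\xi}{\eta}_{\h_\C}$ together with $\J U_t=U_t\J$ (equivalently $\J A\J=A^{-1}$), a direct computation of the adjoint $S_\varphi^{*}$ gives that $\Delta_\varphi=S_\varphi^{*}S_\varphi$ acts on $\h$ as the closure of $A^{-1}$ with respect to $\norm{\cdot}_U$ — the $n=1$ instances of (ii) and (iii). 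For general $n$, the key observation is that $T$, and hence $P^{(n)}$ and the operators $R^{*}_{n-k,k}$ in \eqref{Positive op}--\eqref{Op R}, commute with $U_t^{\otimes n}$ — immediate from \eqref{T action} because each $\h^{(i)}$ is $U_t$-invariant — so $(A^{-1})^{\otimes n}$ is closable for $\inner{\cdot}{\cdot}_T$ and commutes with $P^{(n)}$. Combining this with Step 1 (or, equivalently, with the fact that $W(\zeta_1\otimes\cdots\otimes\zeta_n)\mapsto W(U_t\zeta_1\otimes\cdots\otimes U_t\zeta_n)$ is a $\varphi$-preserving $*$-automorphism group satisfying the KMS condition, which one checks directly on the moment formula for $\varphi(W(\xi_{t_1})\cdots W(\xi_{t_l}))$) identifies $\Delta_\varphi^{it}$ with the second quantization of $A^{it}$ and yields (ii) and (iii). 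Finally, (iv) follows from the polar decomposition $S_\varphi=J_\varphi\Delta_\varphi^{1/2}$: writing $J_\varphi=S_\varphi\Delta_\varphi^{-1/2}$ and inserting Step 1 ($S_\varphi$ is leg-wise $\J$ followed by order-reversal) and Step 2 ($\Delta_\varphi^{-1/2}$ is the closure of $(A^{1/2})^{\otimes n}$), together with $\J A^{1/2}=A^{-1/2}\J$ and $\J\eta=\eta$ for $\eta\in\h_\R$, gives $J_\varphi(\eta_1\otimes\cdots\otimes\eta_n)=(A^{-1/2}\eta_n)\otimes\cdots\otimes(A^{-1/2}\eta_1)$ for $\eta_i\in\h_\R\cap\mathfrak{D}(A^{-1/2})$.

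\textbf{Main obstacle.} The only genuinely nontrivial step is the adjoint computation in Step 1: one must track the crossing coefficients $f_{(I,J)}(q_{ij})$ through the reversal of the operator products, and it is exactly here that the Yang--Baxter relation in \eqref{Eq.T} — which underlies the consistency and positivity of $\inner{\cdot}{\cdot}_T$, cf. Proposition \ref{bounded operators} — is used. The inductive alternative noted in Step 1 avoids the combinatorics but relocates the same difficulty into controlling the lower-degree correction terms in $s(\eta_1)\cdots s(\eta_n)\Omega$, and in Steps 2--3 one must additionally be careful about the distinction between closures with respect to $\norm{\cdot}_U$ and $\norm{\cdot}_{\h_\C}$ when making the domain assertions precise.
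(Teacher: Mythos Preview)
The paper does not actually prove this lemma; the sentence immediately preceding it reads ``The following formulas are proved in \cite{BKM20},'' so there is no in-paper argument to compare against. Your outline is a correct sketch of the standard computation (essentially the Shlyakhtenko/Hiai argument for (q-)Araki--Woods Fock spaces, adapted to the mixed-$q$ setting), and it is in the same spirit as the proof in the cited reference: compute $S_\varphi$ on Wick vectors via the adjoint identity $W(\xi_1\otimes\cdots\otimes\xi_n)^{*}=W(\J\xi_n\otimes\cdots\otimes\J\xi_1)$, then read off $\Delta_\varphi$ and $J_\varphi$ using $\J A\J=A^{-1}$ and the fact that $U_t^{\otimes n}$ commutes with $T$ and hence with $P^{(n)}$.

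One small correction to your ``Main obstacle'' paragraph: the coefficient-matching in the adjoint identity does not use the Yang--Baxter relation itself. What makes the reversed sum coincide with the Wick expansion of $\J\xi_{t_n}\otimes\cdots\otimes\J\xi_{t_1}$ is only the symmetry $q_{ij}=q_{ji}$ (together with $\J\h^{(i)}=\h^{(i)}$), since the inversion $r\mapsto n+1-r$ turns each crossing pair $(i(r),j(s))$ with $i(r)>j(s)$ into a crossing pair for the reversed word with the same unordered index pair. The Yang--Baxter equation is used earlier and elsewhere --- to make the quasi-multiplicative map $\pi:S_n\to\B(\h^{\otimes n})$ and hence $P^{(n)}$ and $\langle\cdot,\cdot\rangle_T$ well defined, and to ensure the left and right Fock representations commute (which you invoke for separateness of $\Omega$) --- but not in the adjoint computation itself. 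Your alternative KMS route in Step~2 is in fact exactly how the paper identifies the modular automorphism group in the paragraph following the lemma.
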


We can extend $U_t$ on $\h$ to an one-parameter family of unitary group $\F_T(U_t)$ on $\F_T(\h) $ using Lemma 3.8 of \cite{BKM20}, where $\F_T(U_t)$ is defined as follows:
\begin{align*}
\F_T(U_t)\Omega=\Omega,\ \ \F_T(U_t)\xi_1\otimes\cdots\otimes\xi_n=U_t\xi_1\otimes\cdots\otimes U_t\xi_n
\end{align*}
Then, one can check that
\begin{align*}
\F_T(U_t)s(\xi)\F_T(U_t)^*=s(U_t\xi)\ \text{for}\ \xi\in\h_{\R}.
\end{align*}
Hence, $\alpha_{t}=Ad(\F_T(U_{-t})),$ $t\in\R$ defines a one-parameter group of automorphisms on $\Gamma_T(\h_{\R},U_t)^{\prime\prime}$. In fact, $\set{\alpha_t}_{t\in\R}$ defines the modular automorphism group on $\Gamma_T(\h_{\R},U_t)^{\prime\prime}$ with respect to the vacuum state $\varphi$. This follows from  \cite[Proposition 3.11]{BKM20} and the discussion preceding it.  
 
 The second quantization of mixed $q$-deformed Araki-Woods von Neumann factors assigns every contraction $L: \h_\R\mapsto \mathcal{K}_\R$ between real Hilbert spaces an unital completely positive state preserving map between the corresponding mixed $q$-deformed Araki-Woods von Neumann factors. The following result is proved in Ref.~\cite{BKM20}.

\begin{thm}\label{second quantization}
 Let $\mathcal{K}_{\mathbb{R}}=\oplus_{i=1}^{N_{1}}\mathcal{K}_{\mathbb{R}}^{(i)}$ and $\mathcal{H}_{\mathbb{R}}=\oplus_{i=1}^{N_{2}}\mathcal{H}_{\mathbb{R}}^{(i)}$ be two real Hilbert spaces with one-parameter group of orthogonal transformations $(U_{t})_{t\in\R}, (V_{t})_{t\in\R}$ and let $\mathcal{K}_{\mathbb{R}}^{(i)}$ and $\mathcal{H}_{\mathbb{R}}^{(i)}$ are invariant subspaces for $(U_{t})_{t\in\R}, (V_{t})_{t\in\R}$, respectively, where $N_{2}\geq N_{1}$. Let $-1<q_{ij}<1$ where $1\leqslant i,j\leqslant N_{2}$. Define $T_{1}:\mathcal{K}_{\mathbb{R}}^{(i)}
\otimes\mathcal{K}_{\mathbb{R}}^{(j)}\rightarrow \mathcal{K}_{\mathbb{R}}^{(j)}
\otimes\mathcal{K}_{\mathbb{R}}^{(i)}$ as $\xi \otimes \eta \rightarrow q_{ij}\eta \otimes \xi$ and $T_{2}:\mathcal{H}_{\mathbb{R}}^{(i)}
\otimes\mathcal{H}_{\mathbb{R}}^{(j)}\rightarrow \mathcal{H}_{\mathbb{R}}^{(j)}
\otimes\mathcal{H}_{\mathbb{R}}^{(i)}$ as $\xi \otimes \eta \rightarrow q_{ij}\eta \otimes \xi$. Extend $T_{1}$ to $\mathcal{K}_{\mathbb{R}}\otimes\mathcal{K}_{\mathbb{R}}$ and $T_{2}$ to $\mathcal{H}_{\mathbb{R}}\otimes\mathcal{H}_{\mathbb{R}}$ and denote them again by $T_{1},T_{2}$, respectively. Suppose that $L_{i}:\mathcal{K}_{\mathbb{R}}^{(i)}\rightarrow \mathcal{H}_{\mathbb{R}}^{(i)}$ are contractions and let $L=\oplus_{i=1}^{N_{1}} L_{i}$. Assume that $L_{i}U_{t}=V_{t}L_{i}$ for all $t\in \mathbb{R},\ i\in \lbrace 1,\dots, N_{1}\rbrace $. Then there is a normal ucp map $\Gamma(L):\Gamma_{T_1}(\mathcal{K}_{\R},U_t)^{\prime\prime}\rightarrow \Gamma_{T_{2}}(\h_{\R},V_t)^{\prime\prime}$ extending $s(\eta_{1}\otimes \cdots \eta_{n})\rightarrow s(L\eta_{1}\otimes \cdots L\eta_{n})$ for $\eta_{i}\in \mathcal{H}_{\mathbb{R}}$ for $1\leqslant i\leqslant n.$ Moreover, $\Gamma(L)$ preserves the vacuum states. 
\end{thm}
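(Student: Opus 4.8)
The plan is to construct the map $\Gamma(L)$ at the level of the Fock space first, and then verify that it implements the desired completely positive map on the von Neumann algebra. Concretely, I would define an operator $\mathcal{F}_T(L) : \mathcal{F}_{T_1}(\mathcal{K}) \to \mathcal{F}_{T_2}(\h)$ by $\mathcal{F}_T(L)\Omega = \Omega$ and $\mathcal{F}_T(L)(\xi_1 \otimes \cdots \otimes \xi_n) = L\xi_1 \otimes \cdots \otimes L\xi_n$ on simple tensors, where on the right $L$ is the complexified contraction extended to the completed Hilbert spaces (here the intertwining hypothesis $L_i U_t = V_t L_i$ is what guarantees $L$ extends to a contraction with respect to the deformed inner products $\inner{\cdot}{\cdot}_U$, since it commutes with the analytic generators; this is the first technical point to check). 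The first key step is to show $\mathcal{F}_T(L)$ is well-defined and contractive on each graded piece, which amounts to the operator inequality $(L^*)^{\otimes n} P_{T_2}^{(n)} L^{\otimes n} \leqslant P_{T_1}^{(n)}$; since the matrix $(q_{ij})$ is the same on both sides, the $\pi(\sigma)$ operators are ``the same'' up to conjugation by $L^{\otimes n}$, and this inequality should follow from the Bożejko–Speicher positivity machinery (the operators $P^{(n)}$ are built from the same combinatorial data), combined with the fact that a contraction tensored with itself $n$ times is a contraction.

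Next I would define $\Gamma(L) : \Gamma_{T_1}(\mathcal{K}_\R, U_t)'' \to \Gamma_{T_2}(\h_\R, V_t)''$ by the formula $\Gamma(L)(x) = \mathcal{F}_T(L)\, x\, \mathcal{F}_T(L)^*$ acting on vectors, or more precisely by the requirement that $\Gamma(L)(x)\Omega = \mathcal{F}_T(L)(x\Omega)$ for $x$ in the $w^*$-dense $*$-algebra $\widetilde{\Gamma}_{T_1}(\mathcal{K}_\R,U_t)$, and then check this is a well-defined normal map. The cleanest route is to verify that $\mathcal{F}_T(L)$ intertwines the creation operators appropriately, namely $\mathcal{F}_T(L)\, l_{T_1}(\xi) = l_{T_2}(L\xi)\, \mathcal{F}_T(L)$ for $\xi \in \mathcal{K}$ (this follows directly from the definition of $l(\xi)=a(\xi)$ as left tensoring), and then use the Wick formula (Lemma \ref{wick pdt formula}) to express $\Gamma(L)(W(\xi_1 \otimes \cdots \otimes \xi_n)) = W(L\xi_1 \otimes \cdots \otimes L\xi_n)$; the point is that the Wick word $W(\xi_1\otimes\cdots\otimes\xi_n)$ is a sum of products of $l$'s and $l^*$'s with coefficients $f_{(I,J)}(q_{ij})$ depending only on the $q$'s, which are unchanged, so conjugating by $\mathcal{F}_T(L)$ simply replaces every $\xi_{i_k}$ by $L\xi_{i_k}$ (modulo checking how $\mathcal{F}_T(L)^*$ interacts with $l^*$, which needs $\J_{\mathcal{K}}$ and $\J_{\h}$ to be compatible with $L$ — again a consequence of the intertwining hypothesis and Lemma \ref{Modular}).

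Then complete positivity comes for free from the Fock-space realization: for any $x \in \Gamma_{T_1}(\mathcal{K}_\R,U_t)''$, writing $V = \mathcal{F}_T(L)$ one has $\Gamma(L)(x) = V x V^*$ on the appropriate domain, but to get a genuine ucp map on the von Neumann algebra level one should instead use the standard ``$V^* \pi(\cdot) V$'' form: realize $\Gamma_{T_1}$ on $\mathcal{F}_{T_2}(\h)$ via a representation and compress by the partial isometry / contraction associated to $\mathcal{F}_T(L)$, so that $\Gamma(L) = V^*(\cdot)V$ with $V$ a contraction, which is manifestly completely positive and unital (since $V\Omega = \Omega$ and $V$ is a contraction). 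Unitality and the estimate $\Gamma(L)(1)=1$ follow from $\mathcal{F}_T(L)^*\mathcal{F}_T(L) = $ (projection-like, or at least fixes $\Omega$) together with normality established via bounded nets and $w^*$-density of $\widetilde{\Gamma}_{T_1}$. Finally, vacuum-state preservation is immediate: $\varphi_2(\Gamma(L)(x)) = \inner{\Omega}{\Gamma(L)(x)\Omega}_{T_2} = \inner{\Omega}{\mathcal{F}_T(L)(x\Omega)}_{T_2} = \inner{\mathcal{F}_T(L)^*\Omega}{x\Omega}_{T_1} = \inner{\Omega}{x\Omega}_{T_1} = \varphi_1(x)$.

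The main obstacle I anticipate is the positivity inequality $(L^*)^{\otimes n} P_{T_2}^{(n)} L^{\otimes n} \leqslant P_{T_1}^{(n)}$ in the non-tracial (type III) setting, where $L$ is a contraction for the deformed inner products rather than the original ones and does not commute with the $P^{(n)}$'s in any obvious way; one must argue that $\mathcal{F}_T(L)$ is a contraction on the $T$-deformed Fock space, not merely on the undeformed one, and this requires care because the deformation operators $P^{(n)}$ interact with the direct-sum decomposition $\h_\R = \oplus \h_\R^{(i)}$ that $L$ must respect block-by-block. The secondary difficulty is bookkeeping the conjugation $\J$: since $l^*$ in the Wick formula involves $\J\xi$, one needs $L\J_{\mathcal{K}} = \J_{\h} L$ on the relevant subspaces, which should follow from $L_i$ intertwining $U_t$ and $V_t$ (hence commuting with the analytic generators $A_{\mathcal{K}}$ and $A_{\h}$, hence with $\J = $ the conjugation compatible with the modular structure), but making this rigorous in the unbounded setting of the analytic generators will need the spectral-data compatibility remarks cited from \cite{BM17}.
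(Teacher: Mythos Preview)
The paper does not prove this theorem; it is simply quoted as a result ``proved in Ref.~\cite{BKM20}'', so there is no in-paper argument to compare against. Your outline follows the expected second-quantization template and correctly isolates the two genuine technical points (contractivity of $\mathcal{F}_T(L)$ for the $T$-deformed inner product, and compatibility of $L$ with the conjugation $\J$ via the intertwining hypothesis).

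There is, however, a real gap in your complete-positivity step. Writing $V=\mathcal{F}_T(L)$, the map $x\mapsto VxV^*$ is indeed cp, but it is not unital (you would need $VV^*=1$, i.e.\ $V$ a coisometry, which fails for a genuine contraction $L$), and more seriously there is no reason for $VxV^*$ to lie in $\Gamma_{T_2}(\h_\R,V_t)''$ rather than merely in $\B(\mathcal{F}_{T_2}(\h))$: your Wick-word calculation shows only that the \emph{separately defined} map $W(\xi)\mapsto W(L^{\otimes n}\xi)$ agrees with $VxV^*$ on the vacuum vector, not as operators. Your fallback ``$V^*\pi(\cdot)V$'' proposal requires a representation $\pi$ of $\Gamma_{T_1}(\mathcal{K}_\R,U_t)''$ on a space into which $\mathcal{F}_{T_2}(\h)$ embeds isometrically, and you do not say what $\pi$ is; the Fock-space map $\mathcal{F}_T(L)$ goes the wrong direction for this. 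The standard remedy (and the one used in \cite{BKM20}, following Hiai in the $q$-Araki--Woods case and Bo\.{z}ejko--Speicher in the tracial case) is to \emph{dilate} $L$: factor it through an isometry into a larger Hilbert space $\mathcal{K}_\R\hookrightarrow \mathcal{K}_\R\oplus\h_\R$ followed by the orthogonal projection onto $\h_\R$, both respecting the block decomposition and intertwining the one-parameter groups. Second quantization of the isometry gives a state-preserving $*$-homomorphism (checked via the moment formula for $\varphi$), and second quantization of the projection is the $\varphi$-preserving conditional expectation onto the sub-factor; composing these yields a manifestly normal ucp map whose action on Wick words is exactly $W(\xi)\mapsto W(L^{\otimes n}\xi)$. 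Your direct route can be salvaged, but only by inserting this dilation step.
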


\subsection{Ultraproducts of von Neumann algebras}

In this section, we recall the notions of ultraproducts of von Neumann algebras from \cite{AH14}. Originally, the ultraproduct of von Neumann algebras was defined for tracial von Neumann algebras. In the case of type III von Neumann algebras, there are two notions of ultraproducts. One is given by Ocneanu, and the other by Raynaud. Ocneanu's ultraproduct is a generalisation  of ultraproducts of tracial von Neumann algebras. However, we restrict ourselves to the Raynaud's ultraproduct as it is more suitable for our purpose.

Before proceeding to the ultraproduct of von Neumann algebras, we briefly recall the ultraproduct of Banach spaces. Fix a free ultrafilter $\omega$ on $\N$. Recall that, $\omega\in\beta\N\setminus\N$, where $\beta\N$ is the Stone-Cech compactification of $\N$. For given a sequence of Banach spaces $(E_n)_{n\in\N}$, consider the space $\ell^{\infty}(\N,E_n)$ of all sequences $(x_n)_n\in \displaystyle\prod_{n\in\N} E_n$ with $\sup_{n\in\N}\norm{x_n}< \infty$. Clearly, $\ell^{\infty}(\N,E_n)$ becomes a Banach space  with the norm given by $\norm{(x_n)_n} = \sup_{n\in\N}\norm{x_n}$, for $(x_n)_n\in\ell^{\infty}(\N,E_n)$. 
Let $\mathcal{I}_{\omega}$ denote the closed subspace of all $(x_n)_n\in\ell^{\infty}(\N,E_n)$ which satisfies
$\lim_{n\rightarrow\omega}\norm{x_n} = 0.$ Then the Banach space ultraproduct $(E_n)_{\omega}$ is defined as the quotient $\ell^{\infty}(\N,E_n)/I_{\omega}$. Any element of $(E_n)_{\omega}$ represented by $(x_n)_n\in\ell^{\infty}(\N,E_n)$ is written as $(x_n)_{\omega}$. If $(H_n)_{n\in\N}$ is a sequence of Hilbert spaces, then the Banach space ultraproduct $H_{\omega}:=(H_n)_{\omega}$ is again a Hilbert space with the inner product defined by $\inner{(\xi_n)_{\omega}}{(\eta_n)_{\omega}}:= \lim_{n\rightarrow\omega}\inner{\xi_n}{\eta_n}$, for $(\xi_n)_{\omega},(\eta_n)_{\omega}\in (H_n)_{\omega}$.

Suppose $(M_n,\varphi_n)$ is a sequence of von Neumann algebras equipped with normal faithful states. For each $n\in\N$, we can realize $M_n\subset \B(H_n)$, using GNS representation of $M_n$ with respect to $\varphi_n$.  Let $(M_n)_{\omega}$ be the Banach space ultraproduct of $(M_n)_{n\in\N}$. Note that, $(M_n)_{\omega}$ is $C^*$-algebra with respect to the norm $\norm{(x_n)_{\omega}}=\displaystyle\lim_{n\rightarrow\omega}\norm{x_n}$. Also, let $H_{\omega}$ be the Banach space ultraproduct of the sequence of GNS spaces $(H_n)_{n\in\N}$. Consider the  diagonal action $\Theta:(M_n)_{\omega}\rightarrow \B(H_{\omega})$ defined by 
\begin{equation}
\Theta((a_n)_{\omega})(\xi_n)_{\omega}:=(a_n\xi_n)_{\omega}.
\end{equation}
It can be easily checked that this action is a well-defined $*$-homomorphism and 
\begin{equation*}
\norm{\Theta((a_n)_{\omega})}=\displaystyle\lim_{n\rightarrow\omega}\norm{a_n}=\norm{(a_n)_{\omega}}. 
\end{equation*}
Hence, $\Theta$ is an injective $*$-homomorphism. The Raynaud ultraproduct is defined as the weak closure of $\Theta((M_n)_{\omega})$ inside $\B(H_{\omega})$ and  denoted by $\prod^{\omega}(M_n,\varphi_n)$. The ultraproduct state on the Raynaud ultraproduct, denoted by $\varphi_{\omega}$, is a vector state given by the ultraproduct of the cyclic vectors for the GNS representations of algebras $M_n$, i.e.
\begin{equation} \varphi_{\omega}(x):=\inner{\xi_{\omega}}{x\xi_{\omega}},\quad \text{for}\quad x\in\prod^{\omega}(M_n,\varphi_n),
\end{equation}
 where $\xi_{\omega}:=(\xi_n)_{\omega}\in(\h_n)_{\omega}$.

 \subsection{Radial Multipliers}
 In this article, the main goal is to find the completely bounded norm of a special class of completely bounded linear maps on mixed $q$-deformed Araki–Woods factors, called radial multipliers.
 
 \begin{defn}
  Let $\varphi :\N\cup\set{0}\rightarrow \C$ be a bounded function. The linear map $m_{\varphi}:\tilde{\Gamma}_T(\h_{\R},U_t)\rightarrow\tilde{\Gamma}_T(\h_{\R},U_t)$ given by
 \begin{align*}
 m_{\varphi}(W(\xi))= \varphi(n)W(\xi) \ \text{ for}\ \ \xi\in(\h_{\C})^{\otimes n}
 \end{align*}
 is called the radial multiplier. If $m_{\varphi }$ extends to a completely bounded map  $m_{\varphi}:\Gamma_T(\h_{\R},U_t)\rightarrow\Gamma_T(\h_{\R},U_t)$, we call $m_{\varphi}$ a completely bounded radial multiplier on $\Gamma_T(\h_{\R}, U_t)^{\prime\prime}$. 
 \end{defn}
 
We can consider the same radial multiplier on the mixed $q$-Gaussian algebras. The idea is to find the cb norm of these radial multipliers on the mixed $q$-Gaussian von Neumann algebras and use the same to find the cb norm of the same radial multiplier on the mixed $q$-deformed Araki-Woods von Neumann factors. We prepare ourselves to find the cb norm of the radial multipliers on the mixed $q$-Gaussian algebras here.

Notice that, in the construction of the mixed $q$-deformed Araki-Woods von Neumann factor, if the orthogonal family $\set{U_t: t\in \R}$ is trivial, the von Neumann algebra  $\Gamma_T(\h_{\R}, U_t)^{\prime\prime}$ will be a mixed $q$-Gaussian von Neumann algebra, which we denote as $\Gamma_Q(\h_{\R})^{\prime\prime}$. The vacuum state $\varphi$ becomes a trace and will be denoted by $\varphi_Q$. The inner product on the Fock space will be denoted by $\inner{\cdot}{\cdot}_Q$. 

For $n\in\N\cup\set{0}$, let $F_n : \Gamma_T(\h_{\R},U_t)^{\prime\prime}\rightarrow \Gamma_T(\h_{\R},U_t)^{\prime\prime}$ be the projection defined by $F_n(W(\xi)) =\delta_{m,n} W(\xi)$ for $\xi\in\h_{\C}^{\otimes m}$. Notice that, for $n\in\N\cup\set{0}$ the radial multipliers $m_{\varphi_n}$ corresponding to the map $\varphi_n:\N\cup\set{0}\rightarrow\C$ defined by $\varphi_n(m)=\delta_{m,n}$ is nothing but $F_n$. So, our aim is to find the cb norm of $F_n$ on $\Gamma_T(\h_{\R}, U_t)^{\prime\prime}$.

\section{Embedding of mixed deformed $q$-Araki-Woods factors into ultraproduct }

Recall that, to establish the $w^*$-complete metric approximation property of the mixed $q$-deformed Araki-Woods factor, we have to find a  net of completely bounded, $w^*$-continuous  maps and  $(\phi_i)$, whose cb norms are less than or equal to $1$ and $ \lim_i \phi_i(x) = x $ for all $x \in \Gamma_{T}(\h_{\R}, U_{t})^{\prime \prime}$.  In this setting to find such maps our strategy is to use the maps $(F_n)_{ n\geqslant 0 }$ (as considered in the end of Section.2),  and  a family of second quantization maps given by Theorem \ref{second quantization}.  As the  second quantization maps are completely contractive,  hence, to get  a family of maps whose completely bounded norms are less than or equal to $1$, we need to estimate the completely bounded norms of $F_n$ $(n\geqslant 0)$.

In this section, we provide an embedding of mixed $q$-deformed Araki-Woods von Neumann factor inside the ultraproduct of tensor products of some suitable mixed $q$-Gaussian and $q$-Araki-Woods von Neumann factors using the techniques of \cite{JXu07} and \cite{ABW18}. In \cite{JXu07}, an upper bounds for the cb norms of $F_n$'s when defined on $\Gamma_Q(\h_{\R})^{\prime\prime}$ were already provided. We will use these bounds together with the embedding to get upper bounds for the same maps $F_n$ $(n\geqslant 0)$ when defined on the mixed $q$-deformed Araki-Woods von Neumann factor.

Let $\Gamma_T(\h_{\R},U_t)^{\prime\prime}$ be a fixed mixed $q$-deformed Araki-Woods factor for some  $Q=(q_{ij})_{ij}$ such that,  $\max_{i,j}   |q_{ij}|   < 1$. We can find some $q$ such that $\max_{i,j}  |q_{ij}|   < q < 1.$ Let $\tilde{Q} :=(\tilde {q}_{ij})_{ij}$ such that $q_{ij}=q\tilde{q}_{ij} $ for $i,j\in N.$ Notice that $Q=q\tilde{Q}$ and  $\max_{i,j}  |\tilde{q}_{ij}|   < 1$. For any $m\in \N$ and $N\geq m$, we consider the decomposition of $\R^m$ as:
 \begin{align*}
  \R^m=\underset{m}{\underbrace{\R\oplus\cdots\oplus\R}}\oplus\underset{N-m}{\underbrace{\set{0}\oplus\cdots\oplus\set{0}}}   
 \end{align*}
 For $m\in \N$ and $m>N$, we consider a new matrix, which we denote by $\tilde{Q}$ again, by putting $q_{ij}=0$ whenever $i$ or $j>N$. With these convention, we consider the mixed $q$-Gaussian algebra $\Gamma_{\tilde{Q}}(\R^m)^{\prime\prime}$ associated to the matrix $\tilde{Q}$. Let $\Gamma_q(\h_{\R}\otimes \R^m, U_{t}\otimes I_m)^{\prime\prime}$ be the $q$-Araki-Woods factor, where $I_m$ denotes the identity on $\R^m$. For each $m$, fix an orthonormal basis $(e_1, \ldots , e_m)$ of $\R^m$ and for $\xi\in\h$, we define
\begin{align*}
u_m(\xi):= m^{-\frac{1}{2}}\displaystyle\sum_{k=1}^m W(e_k)\otimes W(\xi\otimes e_k)\in \Gamma_{\tilde{Q}}(\R^m)^{\prime\prime}\overline{\otimes} \Gamma_q(\h_{\R}\otimes \R^m,U_t\otimes I_m)^{\prime\prime}.
 \end{align*}
For a non-principal ultrafilter $\omega$ of $\N$, consider the Raynaud's ultraproduct 
$$\mathcal{N}:=\displaystyle\prod^{\omega}\Gamma_{\tilde{Q}}(\R^m)^{\prime\prime}\overline{\otimes} \Gamma_q(\h_{\R}\otimes \R^m, U_t\otimes I_m)^{\prime\prime} $$ and let $(\varphi_{\tilde{Q}}\otimes \varphi_q)_{\omega}$ denote the ultraproduct state on $\mathcal{N}$. 
\begin{rem}\label{norm estimation}
 Note that for any operators $x_1, x_2,\ldots, x_m\in\B(H)$ and $1$ being the identity of $B(H)$, where $H$ is some Hilbert space and $m\in\N$, we have $\displaystyle\sum_{k=1}^m x_k^*x_k\leqslant\displaystyle\sum_{k=1}^m \norm{x_k}^2$. Further, suppose $ \norm{x_1} = \norm{ x_2} = \cdots = \norm{ x_m} $, then it follows that $\displaystyle\sum_{k=1}^m x_k^*x_k\leqslant m \norm{x_1}^2$. Similarly, we will have $\displaystyle\sum_{k=1}^m x_kx_k^*\leqslant m \norm{x_1}^2$.   
\end{rem}

\begin{lem}
For all $\xi\in\h_{\C}$, $(u_m(\xi))_{\omega}\in\mathcal{N}$.
\end{lem}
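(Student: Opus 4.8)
The plan is to verify that $\sup_m\|u_m(\xi)\|<\infty$ in $\mathcal M_m:=\Gamma_{\tilde Q}(\R^m)^{\prime\prime}\,\overline{\otimes}\,\Gamma_q(\h_{\R}\otimes\R^m,U_t\otimes I_m)^{\prime\prime}$; this is precisely the condition for $(u_m(\xi))_m$ to lie in $\ell^\infty(\N,\mathcal M_m)$ and hence to define an element $(u_m(\xi))_\omega$ of the Banach space ultraproduct $(\mathcal M_m)_\omega$, whose image under the diagonal embedding $\Theta$ lies in $\mathcal N$ by definition of Raynaud's ultraproduct. Since $\zeta\mapsto W(\zeta)$ is linear on $(\h_{\C})^{\otimes 1}$, the map $\xi\mapsto u_m(\xi)$ is complex linear, so writing $\xi=\RE\xi+i\,\IM\xi$ we may if desired reduce to $\xi\in\h_{\R}$; in any case the estimate below works for $\xi\in\h_{\C}$ with the constant $M_\xi:=\bigl(\|\xi\|_U+\|\J\xi\|_U\bigr)(1-q)^{-1/2}$. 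Write $u_m(\xi)=\sum_{k=1}^m W(e_k)\otimes x_k^{(m)}$, where $x_k^{(m)}:=m^{-1/2}W(\xi\otimes e_k)\in\Gamma_q(\h_{\R}\otimes\R^m,U_t\otimes I_m)^{\prime\prime}$.

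The key input is that $\{W(e_k)\}_{k=1}^m$ is a mixed $\tilde Q$-semicircular family in $\Gamma_{\tilde Q}(\R^m)^{\prime\prime}$, for which one has an operator-valued Haagerup-type (Khintchine) inequality with a constant governed only by $\tilde q:=\sup_{i,j}|\tilde q_{ij}|<1$, in particular \emph{independent of $m$}: there is $C=C(\tilde q)$ with
\[
\Bigl\|\sum_{k=1}^m W(e_k)\otimes x_k\Bigr\|_{\Gamma_{\tilde Q}(\R^m)^{\prime\prime}\overline{\otimes}B}\ \le\ C\Bigl(\bigl\|\textstyle\sum_k x_kx_k^*\bigr\|^{1/2}+\bigl\|\textstyle\sum_k x_k^*x_k\bigr\|^{1/2}\Bigr)
\]
for every C$^*$-algebra $B$ and $x_1,\dots,x_m\in B$; this is where the norm estimates for mixed $q$-Gaussian algebras and the techniques of \cite{ABW18} enter, the point being that the constant depends only on $\|T_{\tilde Q}\|=\tilde q$. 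It then remains to estimate the row and column sums of the $x_k^{(m)}$. Because the analytic generator of $U_t\otimes I_m$ is $A\otimes I_m$, one has $\|\xi\otimes e_k\|_{U\otimes I}=\|\xi\|_U$ and $\|(\J\xi)\otimes e_k\|_{U\otimes I}=\|\J\xi\|_U$; since $W(\xi\otimes e_k)=l(\xi\otimes e_k)+l^*((\J\xi)\otimes e_k)$ (the $n=1$ case of Lemma \ref{wick pdt formula}), Proposition \ref{bounded operators} gives $\|W(\xi\otimes e_k)\|\le M_\xi$ for every $k$, hence $\|x_k^{(m)}\|\le m^{-1/2}M_\xi$. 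By Remark \ref{norm estimation}, $\sum_k x_k^{(m)}(x_k^{(m)})^*\le\sum_k\|x_k^{(m)}\|^2\le M_\xi^2$ and $\sum_k (x_k^{(m)})^*x_k^{(m)}\le M_\xi^2$, so that $\|u_m(\xi)\|_{\mathcal M_m}\le 2C(\tilde q)M_\xi$ uniformly in $m$. Hence $(u_m(\xi))_\omega$ is a well-defined element of $(\mathcal M_m)_\omega$ and $\Theta((u_m(\xi))_\omega)\in\mathcal N$, as claimed.

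I expect the Haagerup-type inequality with an $m$-independent constant to be the only non-routine ingredient. The naive ``row--column'' (Cauchy--Schwarz) bound $\|\sum_k W(e_k)\otimes x_k\|\le\|\sum_k W(e_k)W(e_k)^*\|^{1/2}\|\sum_k x_k^*x_k\|^{1/2}$ is useless here, since $\|\sum_k W(e_k)W(e_k)^*\|$ is of order $m$ (for instance $W(e_k)^2=W(e_k\otimes e_k)+1$, so $\sum_k W(e_k)W(e_k)^*=\sum_k W(e_k\otimes e_k)+m\cdot 1$) and likewise $\|\sum_k W(\xi\otimes e_k)^*W(\xi\otimes e_k)\|$ is of order $m$; this route only yields $\|u_m(\xi)\|=O(\sqrt m)$, and the normalising factor $m^{-1/2}$ is recovered only through the cancellation built into the $\tilde Q$-semicircular family. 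A self-contained variant would expand, using $W(e_k)W(e_j)=W(e_k\otimes e_j)+\delta_{kj}1$, so that $u_m(\xi)^*u_m(\xi)=1\otimes\bigl(m^{-1}\sum_k W(\xi\otimes e_k)^*W(\xi\otimes e_k)\bigr)+m^{-1}\sum_{k,j}W(e_k\otimes e_j)\otimes W(\xi\otimes e_k)^*W(\xi\otimes e_j)$, the first summand being bounded by $M_\xi^2$ and the second requiring the length-$2$ analogue of the same cancellation estimate; either way a genuine estimate of Haagerup type is unavoidable.
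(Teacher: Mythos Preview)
Your proposal is correct and follows essentially the same route as the paper: reduce to a uniform bound on $\|u_m(\xi)\|$, invoke an operator-valued Khintchine inequality for the mixed $\tilde Q$-semicircular family $\{W(e_k)\}_{k=1}^m$ with constant independent of $m$, and then control the resulting row and column sums via $\|x_k^{(m)}\|\le m^{-1/2}M_\xi$ together with Remark~\ref{norm estimation}. The paper makes the Khintchine step precise by applying \cite[Theorem~3.10]{BKM21} in the case $n=1$ (with $K=\F_q(\h\otimes\C^m)$), which yields exactly your displayed inequality up to replacing the sum of row and column norms by their maximum; your remark that the dimension-free constant is the only non-routine ingredient, and that the naive Cauchy--Schwarz bound fails, is spot on.
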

\begin{proof}
To show $(u_m(\xi))_{\omega}\in\mathcal{N}$, it is enough to conclude that the sequence $(u_m(\xi))_{m\in\N}$ is bounded. By definition, it follows
\begin{align}\label{defn of u}
    \norm{u_m(\xi)} = \norm{m^{-\frac{1}{2}}\displaystyle\sum_{k=1}^m W(e_k)\otimes W(\xi\otimes e_k) },
\end{align}
where $W(e_k)\in\Gamma_{\tilde{Q}}(\R^m)^{\prime\prime}$, which is also a mixed $q$-deformed Araki-Woods factor. 
Note that, 
$$m^{-\frac{1}{2}}\displaystyle\sum_{k=1}^m   W(\xi\otimes e_k)\otimes e_k \in \B(\F_q(\h\otimes \C^m))\otimes_{min} \C^m.$$
Hence, we can use the right hand side inequality of \cite[Theorem 3.10]{BKM21}  for $n=1$ and $\C^m$ with $K=\F_q(\h\otimes \C^m)$, we  write 
\begin{align*}
&\norm{(Id\otimes s)(m^{-\frac{1}{2}}\displaystyle\sum_{k=1}^m   W(\xi\otimes e_k)\otimes e_k)}_{min}\\ &\leqslant  2C_q \max\set{\norm{m^{-\frac{1}{2}}\displaystyle\sum_{k=1}^m W(\xi\otimes e_k)\otimes R_{1,0}^*(e_k)},\ \norm{m^{-\frac{1}{2}}\displaystyle\sum_{k=1}^m  W(\xi\otimes e_k)\otimes R_{0,1}^*(e_k)}},\\
\end{align*}
where $s$ defines the generalized wick product formula for vectors in $(\C^m)^{\otimes^n_T}$ given by \cite[Lemma 3.7]{BKM21}, $Id$ denotes the identity on $\B(K)$, and the two norms in the right hand side are calculated in $\B(K)\otimes_{min}(\C^m)_c$ and $\B(K)\otimes_{min}(\C^m)_r$, respectively. Since $e_k\in\R^m$, it is clear from \cite[Lemma 3.7]{BKM21} that $s(e_k)= W(e_k).$ Therefore, the above inequality now becomes
\begin{align*}
&\norm{m^{-\frac{1}{2}}\displaystyle\sum_{k=1}^m   W(\xi\otimes e_k)\otimes W(e_k)}\\ &\leqslant  2C_q \max\set{\norm{m^{-\frac{1}{2}}\displaystyle\sum_{k=1}^m W(\xi\otimes e_k)\otimes R_{1,0}^*(e_k)},\ \norm{m^{-\frac{1}{2}}\displaystyle\sum_{k=1}^m  W(\xi\otimes e_k)\otimes R_{0,1}^*(e_k)}}.
\end{align*}

Hence, using commutativity of tensor product norm in Eq.\ref{defn of u}, we have
\begin{align*}
&\norm{u_m(\xi)}\\
&\leqslant  m^{-\frac{1}{2}}2C_q \max\set{\norm{\displaystyle\sum_{k=1}^m W(\xi\otimes e_k)\otimes R_{1,0}^*(e_k)},\ \norm{\displaystyle\sum_{k=1}^m  W(\xi\otimes e_k)\otimes R_{0,1}^*(e_k)}}.\\
\end{align*}
 Recall from the Eq.\ref{Op R} that both $R^*_{1,0}(e_k)$ and $R^*_{0,1}(e_k)$ are equal to $e_k$, for all $1\leqslant k\leqslant m$. Clearly, we can identify $(\C^m)_c$ with $M_{m,1}(\C)$ and hence $\displaystyle\sum_{k=1}^m W(\xi\otimes e_k)\otimes R_{1,0}^*(e_k)$ can be thought as a `$m\times 1 $' column matrix with entries $(W(\xi\otimes e_k))_{1\leqslant k\leqslant m}\subseteq\B(K)$. Hence,
\begin{align}\label{col}
    \norm{\displaystyle\sum_{k=1}^m W(\xi\otimes e_k)\otimes e_k}_{\B(K)\otimes_{min}(\C^m)_c}  = \norm{ \begin{pmatrix}
    W(\xi\otimes e_1)\\
    \vdots\\
    W(\xi\otimes e_m)
    \end{pmatrix}}_{M_{m,1}(\B(K))}\\
    \nonumber = \norm{\displaystyle\sum_{k=1}^m W(\xi\otimes e_k)^*W(\xi\otimes e_k)}^{\frac{1}{2}}.
\end{align}
In a similar manner, we identify $(\C^m)_r$ with $M_{1,m}(\C)$ and subsequently, we can see $\displaystyle\sum_{k=1}^m W(\xi\otimes e_k)\otimes R_{0,1}^*(e_k)$ as a `$1\times m $' row matrix with entries $(W(\xi\otimes e_k))_{1\leqslant k\leqslant m}\subseteq\B(K)$. This implies
\begin{align}\label{row}
    \norm{\displaystyle\sum_{k=1}^m W(\xi\otimes e_k)\otimes e_k}_{\B(K)\otimes_{min}(\C^m)_r} & = \norm{ \begin{pmatrix}
    W(\xi\otimes e_1) & \cdots & W(\xi\otimes e_m)
    \end{pmatrix}}_{M_{1,m}(\B(K))}\\ & 
    \nonumber = \norm{\displaystyle\sum_{k=1}^m W(\xi\otimes e_k)W(\xi\otimes e_k)^*}^{\frac{1}{2}}.
\end{align}

Hence, it follows from Remark \ref{norm estimation}, and Eq.\ref{col} and Eq.\ref{row}  that
\begin{align*}
   &\norm{u_m(\xi)}\\
   &\leqslant  m^{-\frac{1}{2}}2C_q \max\set{\norm{\displaystyle\sum_{k=1}^m W(\xi\otimes e_k)\otimes R_{1,0}^*(e_k)},\ \norm{\displaystyle\sum_{k=1}^m  W(\xi\otimes e_k)\otimes R_{0,1}^*(e_k)}}\\
   &\leqslant 2C_q\norm{W(\xi\otimes e_1)}.
\end{align*}
\end{proof}

Recall that $\Gamma_T(\h_{\R},U_t)^{\prime\prime}$ is the von Neumann algbera generated by $\set{s(\xi):\xi\in\h_{\R}}$. Let $\set{f_j}_{j\in\Lambda}$ be a orthonormal basis of $\h_{\R}$. It can also be seen that $\set{s(f_j):\ j\in\Lambda}$ generates the von Neumann algebra $\Gamma_T(\h_{\R},U_t)^{\prime\prime}$. We denote $\mathcal{A}:=*\text{-}alg\set{s(f_j)}_{ j\in\Lambda}$ and $B:=*\text{-}alg\set{(u_m(f_j))_{\omega}:\ j\in\Lambda}\subset \mathcal{N}$. Let $\Tilde{B}$ be the $w^*$ closure of $B$, which is a von Neumann subalgebra of $\mathcal{N}.$ \\

Consider the ultraproduct state $(\varphi_{\tilde{Q}}\otimes \varphi_q)_{\omega}$   on $\mathcal{N}$ and it is known that the ultraproduct state is not faithful, in general. So, let $p$ be the support projection of the ultraproduct state $(\varphi_{\tilde{Q}}\otimes \varphi_q)_{\omega}$, which belongs to $\mathcal{N}$. With these notations,  we have the following results.  
\begin{lem}\label{Product}
    For all $x,y\in \Tilde{B}$, $pxyp=pxpyp$. 
\end{lem}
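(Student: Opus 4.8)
The plan is to prove the multiplicativity relation $pxyp = pxpyp$ on $\widetilde{B}$ by first establishing it on the $*$-algebra $B$ and then passing to the $w^*$-closure. The whole point is that, under the ultraproduct state, the sequence $(u_m(f_j))_m$ becomes ``freely independent relative to'' the first tensor leg, and the key quantitative input is that the cross terms between different $u_m$'s in a product vanish in the limit along $\omega$. Concretely, I would first record the asymptotics of moments: for a word $x = (u_m(\xi_1)\cdots u_m(\xi_r))_\omega$ and similarly $y$, one computes $(\varphi_{\tilde Q}\otimes\varphi_q)_\omega(x^* y)$ using the explicit Wick formula (Lemma~\ref{wick pdt formula}) and the $m^{-1/2}$ normalization in the definition of $u_m(\xi)$. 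The normalization is chosen precisely so that $m^{-1}\sum_{k,l} W(e_k)^*W(e_l)\otimes(\cdots)$ collapses, as $m\to\omega$, to only the diagonal $k=l$ contributions after applying $\varphi_{\tilde Q}$ to the first leg — this is the standard ``$q$-Gaussian asymptotic freeness'' mechanism used in \cite{ABW18} and \cite{JXu07}.

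The main technical step is then the following: for $a,b,c \in B$ one has
\begin{align*}
(\varphi_{\tilde Q}\otimes\varphi_q)_\omega\big(a\,(1 - p)\,b\big) = 0
\end{align*}
is automatic since $1-p$ is the null projection of the state, but what we actually need is the stronger fact that, for all $x,y,z,w\in\widetilde{B}$,
\begin{align*}
(\varphi_{\tilde Q}\otimes\varphi_q)_\omega\big(w^*\,x\,y\,z\big) = (\varphi_{\tilde Q}\otimes\varphi_q)_\omega\big(w^*\,x\,p\,y\,z\big),
\end{align*}
i.e. inserting $p$ between $x$ and $y$ does not change the value of any such ``matrix coefficient.'' Equivalently, $x(1-p)y$ is a null vector for the GNS form, i.e. $(\varphi_{\tilde Q}\otimes\varphi_q)_\omega\big(y^*(1-p)x^*x(1-p)y\big) = 0$. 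Since $(1-p)x^*x(1-p) \leqslant \|x\|^2 (1-p)$ and $(1-p)$ is the null projection, this is immediate once we know $x(1-p)$ is a left-bounded element with $(\varphi_{\tilde Q}\otimes\varphi_q)_\omega$-norm zero; but the cleaner route is to observe that $p$ is central for the action of $B$ on the GNS Hilbert space of the ultraproduct state restricted to $\widetilde B$. That centrality is where the real work lies: one must show $p$ commutes with every $(u_m(f_j))_\omega$. I would prove this by the moment computation above — $p$ commutes with $(u_m(f_j))_\omega$ iff the GNS vector $\xi_\omega$ is cyclic-and-separating-modulo-$p$ for $B$ in the appropriate sense, which follows by checking $(\varphi_{\tilde Q}\otimes\varphi_q)_\omega\big([p, (u_m(f_j))_\omega]\,x\big) = 0$ for all $x\in B$, again reducing to the vanishing of cross moments.

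Granting the centrality of $p$ relative to $B$, the lemma follows in two lines: for $x,y\in B$, $pxyp = p x p y p + p x (1-p) y p$, and the second term vanishes because $x(1-p) = (1-p)x'$ type manipulations — more precisely, since $p$ commutes with $x$ (it commutes with the generators, hence with $B = *\text{-}alg\{(u_m(f_j))_\omega\}$, hence with $\widetilde B$ by normality of multiplication in the $w^*$-topology), we get $px(1-p) = xp(1-p) = 0$, so $px(1-p)yp = 0$. Finally, to pass from $B$ to $\widetilde B$: the relation $p z_1 z_2 p = p z_1 p z_2 p$ is separately $w^*$-continuous in $z_1$ and in $z_2$ on norm-bounded sets (multiplication by the fixed bounded operators $p$ and the other variable is $w^*$-continuous, and $B$ is $w^*$-dense in $\widetilde B$ with bounded approximants available by Kaplansky), so it extends from $B\times B$ to $\widetilde B\times\widetilde B$.

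\textbf{Where the difficulty sits.} The genuinely nontrivial part is establishing that the support projection $p$ commutes with the generators $(u_m(f_j))_\omega$, or equivalently that $\widetilde B$ sits inside $p\mathcal N p$ in a way compatible with the state — this is exactly the content of the ultraproduct embedding being built in this section, and it rests on the precise $m^{-1/2}$-normalized central-limit behaviour of the $u_m$'s together with the norm estimate $\|u_m(\xi)\| \leqslant 2C_q\|W(\xi\otimes e_1)\|$ just proved. Everything after that — the algebraic identity $px(1-p)yp=0$ and the $w^*$-continuity extension — is routine.
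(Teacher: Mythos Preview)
Your proposal has a genuine gap at the crucial step. You aim to show that $p$ commutes with the generators $(u_m(f_j))_\omega$ (which, for a $*$-algebra, is indeed equivalent to the stated lemma), but your argument for this is circular. You write that $(\varphi_{\tilde Q}\otimes\varphi_q)_\omega\big(y^*(1-p)x^*x(1-p)y\big) = 0$ follows because $(1-p)x^*x(1-p) \leqslant \|x\|^2(1-p)$ and $1-p$ is the null projection; but this only bounds the quantity by $\|x\|^2\,\psi\big(y^*(1-p)y\big) = \|x\|^2\,\|(1-p)y\xi_\omega\|^2$, and that vanishes precisely when $y\xi_\omega \in \mathrm{range}(p)$ --- which is exactly the commutation $[p,y]=0$ you are trying to prove. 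The ``cross moments vanish'' mechanism you invoke is the content of Theorem~\ref{Embedding} (matching of mixed moments under the ultraproduct state); it says nothing about how the support projection $p = [\mathcal N'\xi_\omega]$ interacts with $\widetilde B$, since moments of $\psi$ on words in $B$ cannot detect whether $B\xi_\omega \subset \overline{\mathcal N'\xi_\omega}$. Your line ``check $(\varphi_{\tilde Q}\otimes\varphi_q)_\omega\big([p,(u_m(f_j))_\omega]\,x\big)=0$'' has the same defect: $p$ is not represented by an explicit sequence, so this is not a moment computation at all.

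The paper's proof is entirely different and bypasses this obstruction. It invokes \cite[Lemma~4.1]{Nou06}, a modular-theoretic criterion: it suffices that every $x\in B$ have a representative $(x_m)_m$ with each $x_m$ entire for the modular group $\sigma_t^m$ of $\Gamma_{\tilde Q}(\R^m)''\overline\otimes\Gamma_q(\h_\R\otimes\R^m,U_t\otimes I_m)''$ and with $(\sigma_{-i}^m(x_m))_m$ uniformly bounded. On generators one uses $\sigma_t^m = \mathrm{id}\otimes\sigma_t^{\varphi_q}$ and the analytic vectors $\h_\C^{an}=\bigcup_{\lambda>1}\chi_{[1/\lambda,\lambda]}(A)\h_\C$ to get $\sigma_z^m(u_m(\xi)) = u_m(A^{-iz}\xi)$, whence $\sup_m\|\sigma_{-i}^m(u_m(\xi))\| = \sup_m\|u_m(A^{-1}\xi)\| \leqslant 2C_q\|W(A^{-1}\xi\otimes e_1)\|$ by the norm estimate of the preceding lemma. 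So the norm bound you correctly flag as relevant is indeed used --- but to control the analytic continuation of the modular orbit, not for any asymptotic-freeness computation. Your $w^*$-density extension from $B$ to $\widetilde B$ is fine.
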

\begin{proof}
    It follows from \cite[Lemma 4.1]{Nou06} that to establish our claim it is enough to show that for all $x\in B$, there is a representative $(x_m)_{m\in\N}$ of $x$ such that for all $m\in\N$, $x_m$ is entire for $(\sigma_t^m)_{t\in\R}$ and $(\sigma_{-i}^m(x_m))_{m\in\N}$ is uniformly bounded, where $\sigma_t^m$ is the modular automorphism group of $\Gamma_{\tilde{Q}}(\R^m)^{\prime\prime}\overline{\otimes} \Gamma_q(\h\otimes \C^m, U_t\otimes I_m)^{\prime\prime}$. Since $B$ is the $*$-algebra generated by $(u_m(f_j))_{\omega}$ $(j\in\Lambda)$, it suffices to check that $(u_m(f_j))_{\omega}$ is analytic for the modular automorphism of $\mathcal{N}$ and $(\sigma_{-i}^m(u_m(f_j)))_{m\in\N}$ is uniformly bounded. Note that the modular automorphism group $(\sigma_t^m)_{t\in\R}$ on $\Gamma_{\tilde{Q}}(\R^m)^{\prime\prime}\overline{\otimes} \Gamma_q(\h\otimes \C^m,U_t\otimes I_m)^{\prime\prime}$ is given by
    \begin{align}\label{tensor auto}
        \sigma_t^m =id_{\Gamma_{\tilde{Q}}(\R^m)} \otimes\sigma_t^{\varphi_q}\ \ \ (t\in\R).
    \end{align}

 Recall that $A$ is the analytic generator of $(U_t)$. Now  suppose  $$\h_{\C}^{an} =\cup_{\lambda >1}\chi_{[1/\lambda,\lambda]}(A)\h_{\C}, $$
 where $\chi_{[1/\lambda,\lambda]}(A)$ denotes the spectral projection of the analytic generator $A$ corresponding to the interval $[1/\lambda,\lambda]$.
 Further, we observe that, $$\J\chi_{[1/\lambda,\lambda]}(A) = \chi_{[1/\lambda,\lambda]}(A^{-1})\J = \chi_{[1/\lambda,\lambda]}(A)\J, $$ 
 where  $\J$ is the  complex conjugation on $\h_{\C}$ and thus, we have  $\J\h_{\C}^{an} = \h_{\C}^{an}.$ Since $A$ is non-degenerate, we further notice that  $$\overline{\cup_{\lambda >1}\chi_{[1/\lambda,\lambda]}(A)\h_{\C}}=\chi_{(0,\infty)}(A)\h_{\C}=\h_{\C}.$$
    This implies that $\h_{\C}^{an}\subset\h_{\C}$ is a dense subspace. Moreover, it follows from the proof of \cite[Theorem 3.1]{BH16} that each $\xi\in\h_{\C}^{an}$  is analytic for the action of the unitary group $U_t = A^{it}$ $(t\in\R)$. Since $\xi\in\h_{\C}^{an}$, there exist $\lambda > 1$  such that $\xi\in \chi_{[1/\lambda,\lambda]}(A)\h_{\C} $.
    Clearly, $\chi_{[1/\lambda,\lambda]}(A)\xi=\xi$. Let $\tilde{A}=A\chi_{[1/\lambda,\lambda]}(A)$, which is a bounded invertible operator with $\norm{\tilde{A}}\leqslant\lambda$. Therefore, for all $z\in\C$, $\tilde{A}^{z}$ is bounded. So, $\xi\in\mathcal{D}_{\tilde{A}^z}$ for all $z\in\C$. This implies, the map $t\mapsto U_t\xi=A^{it}\chi_{[1/\lambda,\lambda]}(A)\xi = \tilde{A}^{it}\xi$ has an entire analytic extension (see also \cite[Remark 2.6]{BM17}). 
    
    This implies $W(\xi)$ is analytic for the modular automorphism group $(\sigma^{\varphi_q}_t)_{t\in\R}$ and $\sigma^{\varphi_q}_{z}(W(\xi))=W(A^{-iz}\xi)$ for $z\in\C$.

Therefore, it follows from Eq.\ref{tensor auto} that if $\xi\in\h^{an}$ and $f \in\C^m $, then elements $W(\xi\otimes f)$ and $u_m(\xi)$ are analytic for their respective modular groups and 
\begin{align*}
\sigma_z^m(u_m(\xi))&=m^{-\frac{1}{2}}\displaystyle\sum_{k=1}^m W(e_k)\otimes \sigma_z^{\varphi_q}(W(\xi\otimes e_k))\\
&=m^{-\frac{1}{2}}\displaystyle\sum_{k=1}^m W(e_k)\otimes W(A^{-iz}\xi\otimes e_k)\\
&=u_m(A^{-iz}\xi).
\end{align*}
Since, the sequence $(u_m(\xi))_{m\in\N}$ is bounded for all $\xi\in\h_{\C}$ and an upper bound is given in the previous lemma, we write 
\begin{align*}
\sup_m\norm{\sigma^m_{-i}(u_m (\xi)}=\sup_m\norm{u_m(A^{-1}\xi)}\leqslant 2C_q\norm{W(A^{-1}\xi\otimes e_1)}.
\end{align*}

\end{proof}

\begin{thm}\label{Embedding}
There exists a unique normal $*$-isomorphism $\Theta:\Gamma_T(\h_{\R},U_t)^{\prime\prime}\mapsto p\Tilde{B}p$, which preserves the states i.e. $p(\varphi_{\tilde{Q}}\otimes \varphi_q)_{\omega}p\circ\Theta=\varphi$ and $\Theta(s(f_i))=p(u_m(f_i))_{\omega}p ,$ for all $i\in\Lambda$. 
\end{thm}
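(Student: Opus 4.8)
The plan is to build $\Theta$ first on the $w^{*}$-dense $*$-subalgebra $\mathcal{A}=*\text{-}\mathrm{alg}\,\set{s(f_j):j\in\Lambda}$ of $\Gamma_T(\h_{\R},U_t)^{\prime\prime}$, by declaring $\Theta(s(f_j)):=p(u_m(f_j))_\omega p$ and extending $*$-multiplicatively, and then to show that $\Theta$ so defined preserves the states and is injective, so that it extends to a normal $*$-isomorphism onto the von Neumann algebra generated by $\set{p(u_m(f_j))_\omega p:j\in\Lambda}$, which by Lemma~\ref{Product} and normality of compression by $p$ is $p\tilde{B}p$. For the algebraic extension to even make sense one needs $b\mapsto pbp$ to be multiplicative on the $*$-algebra generated by the $(u_m(f_j))_\omega$; this is exactly Lemma~\ref{Product}, since $pxyp=pxpyp$ for $x,y\in\tilde{B}$ gives $p(u_m(f_{j_1}))_\omega p\cdots p(u_m(f_{j_l}))_\omega p=p\,(u_m(f_{j_1}))_\omega\cdots(u_m(f_{j_l}))_\omega\,p$.

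The core of the argument is the identity of mixed moments: for all $l\ge 1$ and all $j_1,\dots,j_l\in\Lambda$,
\[
(\varphi_{\tilde{Q}}\otimes\varphi_q)_\omega\big(p(u_m(f_{j_1}))_\omega p\cdots p(u_m(f_{j_l}))_\omega p\big)=\varphi\big(s(f_{j_1})\cdots s(f_{j_l})\big).
\]
Because $p$ is the support projection of $(\varphi_{\tilde{Q}}\otimes\varphi_q)_\omega$, one has $(\varphi_{\tilde{Q}}\otimes\varphi_q)_\omega(pzp)=(\varphi_{\tilde{Q}}\otimes\varphi_q)_\omega(z)$, so by the previous paragraph the left-hand side equals $\lim_{m\to\omega}(\varphi_{\tilde{Q}}\otimes\varphi_q)\big(u_m(f_{j_1})\cdots u_m(f_{j_l})\big)$. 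Expanding the definition of $u_m$, applying the product state, and inserting the pair-partition (Wick) formula for the mixed $q$-Gaussian trace $\varphi_{\tilde{Q}}$ on $\Gamma_{\tilde{Q}}(\R^m)^{\prime\prime}$ and the analogous $q$-Araki-Woods formula for $\varphi_q$ on $\Gamma_q(\h_\R\otimes\R^m,U_t\otimes I_m)^{\prime\prime}$ — together with the fact that, in the deformed inner product of the latter, $\inner{f_{j_a}\otimes e_{k_a}}{f_{j_b}\otimes e_{k_b}}=\inner{f_{j_a}}{f_{j_b}}_U\,\delta_{k_a,k_b}$ — turns the $m$-th term into a sum over pairs $(\nu,\mu)$ of pair-partitions of $\set{1,\dots,l}$ of a combinatorial weight (entries of $\tilde{Q}$ indexed by the crossings of $\nu$, times $q^{i(\mu)}$) times $\prod_{(a,b)\in\mu}\inner{f_{j_a}}{f_{j_b}}_U$ times the number of diagonal colourings $k$ compatible with $\nu\vee\mu$; that count equals $m^{\abs{\nu\vee\mu}}$. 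Since $\abs{\nu\vee\mu}\le l/2$ with equality if and only if $\nu=\mu$, after the normalisation $m^{-l/2}$ the off-diagonal terms vanish along $\omega$, while the diagonal ones survive and — using $q\,\tilde{q}_{ij}=q_{ij}$, so that the $\tilde{Q}$-weight of $\nu$ times $q^{i(\nu)}$ collapses to $g_\nu(q_{ij})$ — add up to $\sum_{\nu}g_\nu(q_{ij})\prod_{(a,b)\in\nu}\inner{f_{j_a}}{f_{j_b}}_U$, which by the moment formula of \cite[Lemma~3.10]{BKM20} equals $\varphi(s(f_{j_1})\cdots s(f_{j_l}))$; both sides vanish for $l$ odd. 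I expect this Speicher-type limiting computation, modelled on \cite{ABW18,JXu07}, to be the main obstacle, the delicate part being the colour/crossing bookkeeping that makes the $\tilde{Q}$- and $q$-weights recombine into $g_\nu(q_{ij})$.

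Granting the moment identity, the rest is routine. The state $\varphi$ is faithful and normal on $\Gamma_T(\h_\R,U_t)^{\prime\prime}$ (its cyclic vector $\Omega$ is separating, by the modular theory recalled above), and $(\varphi_{\tilde{Q}}\otimes\varphi_q)_\omega$ is faithful and normal on $p\tilde{B}p\subseteq p\mathcal{N}p$ since $p$ is its support projection; moreover $pBp$ is a $w^{*}$-dense $*$-subalgebra of $p\tilde{B}p$ (by Lemma~\ref{Product} and normality of $x\mapsto pxp$), just as $\mathcal{A}$ is $w^{*}$-dense in $\Gamma_T(\h_\R,U_t)^{\prime\prime}$. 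Read for arbitrary $*$-words and extended by linearity, the moment identity shows that the two $*$-homomorphisms of the free $*$-algebra on $\set{x_j}_{j\in\Lambda}$ sending $x_j$ to $s(f_j)$, respectively to $p(u_m(f_j))_\omega p$, have the same kernel — using faithfulness and normality of both states on the respective $w^{*}$-closures — hence they induce a well-defined state-preserving $*$-isomorphism $\mathcal{A}\to pBp$. Passing to GNS representations, the map $a\Omega\mapsto\Theta(a)\xi_\omega$, with $\xi_\omega$ the GNS cyclic vector of $(\varphi_{\tilde{Q}}\otimes\varphi_q)_\omega$ on $p\tilde{B}p$, extends to a unitary $L^2(\Gamma_T(\h_\R,U_t)^{\prime\prime},\varphi)\to L^2(p\tilde{B}p,(\varphi_{\tilde{Q}}\otimes\varphi_q)_\omega)$ that carries cyclic vector to cyclic vector and conjugates the two GNS representations, hence implements a normal $*$-isomorphism $\Theta:\Gamma_T(\h_\R,U_t)^{\prime\prime}\to p\tilde{B}p$ with $\Theta(s(f_j))=p(u_m(f_j))_\omega p$ and $p(\varphi_{\tilde{Q}}\otimes\varphi_q)_\omega p\circ\Theta=\varphi$. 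Uniqueness is immediate: a normal $*$-homomorphism agreeing with $\Theta$ on the generators $s(f_j)$ agrees on the $w^{*}$-dense algebra $\mathcal{A}$, hence everywhere.
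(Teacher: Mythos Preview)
Your proposal is correct and follows essentially the same route as the paper: the heart of both arguments is the identification of mixed moments
\[
(\varphi_{\tilde{Q}}\otimes\varphi_q)_{\omega}\big((u_m(f_{t_1}))_{\omega}\cdots (u_m(f_{t_l}))_{\omega}\big)=\varphi\big(W(f_{t_1})\cdots W(f_{t_l})\big),
\]
carried out exactly as you describe --- expanding via the pair-partition formulas on each tensor factor, counting the surviving colourings as $m^{|\nu\vee\nu'|}$, killing the off-diagonal terms in the limit, and recombining $g_{\nu}(\tilde q_{ij})\,q^{i(\nu)}=g_{\nu}(q_{ij})$ from $q_{ij}=q\,\tilde q_{ij}$. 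The only difference is cosmetic: the paper packages the passage from the moment identity to the normal state-preserving $*$-isomorphism by invoking \cite[Proposition~2.1]{BISS14}, whereas you spell out the same GNS/unitary-implementation argument by hand (well-definedness and multiplicativity on $pBp$ via Lemma~\ref{Product}, faithfulness of the compressed ultraproduct state on $p\mathcal{N}p$, and the spatial implementation between the two GNS representations). Your version is slightly more self-contained; the paper's is shorter.
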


\begin{proof}
Recall that $\Gamma_T(\h_{\R},U_t)^{\prime\prime}$ is generated by the set of self-adjoint elements $\set{s(f_j):\  j\in\Lambda}$ and the generators of the von Neumann subalgebra $\tilde{B}$ of $\mathcal{N}$ is the set $\set{(u_m(f_j))_{\omega}:\ j\in\Lambda}.$ 
Then, it follows from \cite[Proposition 2.1]{BISS14} that to show there exists an isomorphism $\Theta$, it is enough to show  for all $l\in\N$, and $t_1,\ldots,t_l\in\Lambda$
\begin{align}\label{state-preserving}
(\varphi_{\tilde{Q}}\otimes\varphi_q)_{\omega}((u_m(f_{t_1}))_{\omega}\cdots (u_m(f_{t_l}))_{\omega})= \varphi(W(f_{t_1})\cdots W(f_{t_l})).
\end{align}
Note that 
\begin{align*}
& (\varphi_{\tilde{Q}}\otimes\varphi_q)_{\omega}((u_m(f_{t_1}))_{\omega}\cdots (u_m(f_{t_l}))_{\omega}\\
&=\lim_{m\rightarrow\omega} (\varphi_{\tilde{Q}}\otimes\varphi_q)(u_m(f_{t_1})\cdots u_m(f_{t_l}))\\
&=\lim_{m\rightarrow\omega}\frac{1}{m^{l/2}}\displaystyle\sum_{k_1,\ldots,k_l=1}^m\varphi_{\tilde{Q}}(W(e_{k_1})\cdots W(e_{k_l}))\varphi_q(W(f_{t_1}\otimes e_{k_1})\cdots W(f_{t_l}\otimes e_{k_l}))
\end{align*}
For odd $l$, recall from  ~\cite[Lemma 3.10]{BKM20}  that $\varphi_{\tilde{Q}}(W(e_{k_1})\cdots W(e_{k_l}))=0$, implying
$(\varphi_{\tilde{Q}}\otimes\varphi_q)(u_m(f_{t_1})\cdots u_m(f_{t_l}))= 0$. Also, $\varphi(W(f_{t_1})\cdots W(f_{t_l}))=0$. This establishes the Eq.\ref{state-preserving}, for all $l\in\N$ odd.

Hence, we assume $l$ is even. Then, for fixed $m\in\N$, we have,
\begin{align*}
&(\varphi_{\tilde{Q}}\otimes\varphi_q)(u_m(f_{t_1})\cdots u_m(f_{t_l}))\\
&=\frac{1}{m^{l/2}}\displaystyle\sum_{k_1,\ldots,k_l=1}^m\varphi_{\tilde{Q}}(W(e_{k_1})\cdots W(e_{k_l}))\varphi_q(W(f_{t_1}\otimes e_{k_1})\cdots W(f_{t_l}\otimes e_{k_l}))\\
&=\frac{1}{m^{l/2}}\displaystyle\sum_{k_1,\ldots,k_l=1}^m\left(\displaystyle\sum_{\nu\in P_2(l)} g_{\nu}(\tilde{q}_{i,j})\displaystyle\prod_{r=1}^{l/2}\inner{e_{k_{i(r)}}}{e_{k_{j(r)}}} \right)\\
&\qquad\qquad\qquad\qquad\qquad\qquad\qquad\qquad\qquad\left(\displaystyle\sum_{\nu'\in P_2(l)}q^{i(\nu')}\inner{f_{t_r}\otimes e_{k_r}}{f_{t_s}\otimes e_{k_s}}_U\right)\\
&=\frac{1}{m^{l/2}}\displaystyle\sum_{k_1,\ldots,k_l=1}^m\left(\displaystyle\sum_{\substack{\nu\in P_2(l)\\ \text{ker} k\geq \nu}} g_{\nu}(\tilde{q}_{i,j}) \right)\left(\displaystyle\sum_{\substack{\nu'\in P_2(l)\\ \text{ker} k\geq \nu'}}q^{i(\nu')}\displaystyle\prod_{(r,s)\in\nu'}\inner{f_{t_r}}{f_{t_s}}_U\right)\\
&=\displaystyle\sum_{\nu,\nu'\in P_2(l)}g_{\nu}(\tilde{q}_{i,j})q^{i(\nu')}\displaystyle\prod_{(r,s)\in\nu'}\inner{f_{t_r}}{f_{t_s}}_U\displaystyle\sum_{\substack{k_1,\ldots,k_l=1\\ \text{ker}k\geq\nu,\text{ker}k\geq\nu'}}^m \frac{1}{m^{l/2}}\\
&=\displaystyle\sum_{\nu,\nu'\in P_2(l)}g_{\nu}(\tilde{q}_{i,j})q^{i(\nu')}\displaystyle\prod_{(r,s)\in\nu'}\inner{f_{t_r}}{f_{t_s}}_U \frac{1}{m^{l/2-|\nu\vee\nu'|}}.
\end{align*} 
Hence, 
\begin{align*}
\lim_{m\rightarrow\infty}(\varphi_{\tilde{Q}}\otimes\varphi_q)&(u_m(f_{t_1})\cdots u_m(f_{t_l}))\\
&=\displaystyle\sum_{\nu,\nu'\in P_2(l)}g_{\nu}(\tilde{q}_{i,j})q^{i(\nu')}\displaystyle\prod_{(r,s)\in\nu'}\inner{f_{t_r}}{f_{t_s}}_U \left(\lim_{m\rightarrow\infty}\frac{1}{m^{l/2-|\nu\vee\nu'|}}\right)\\
&=\displaystyle\sum_{\nu,\nu'\in P_2(l)}g_{\nu}(\tilde{q}_{i,j})q^{i(\nu')}\displaystyle\prod_{(r,s)\in\nu'}\inner{f_{t_r}}{f_{t_s}}_U \delta_{\nu,\nu'}\\
&=\displaystyle\sum_{\nu\in P_2(l)}g_{\nu}(\tilde{q}_{i,j})q^{i(\nu)}\displaystyle\prod_{(r,s)\in\nu}\inner{f_{t_r}}{f_{t_s}}_U \\
&=\displaystyle\sum_{\nu\in P_2(l)}g_{\nu}(q_{i,j})\displaystyle\prod_{(r,s)\in\nu}\inner{f_{t_r}}{f_{t_s}}_U\\
&=\varphi(W(f_{t_1})\cdots W(f_{t_l})).
\end{align*}
This establishes the following: 
\begin{align*}
(\varphi_{\tilde{Q}}\otimes\varphi_q)_{\omega}((u_m(f_{t_1}))_{\omega}\cdots (u_m(f_{t_l}))_{\omega})= \varphi(W(f_{t_1})\cdots W(f_{t_l})),\ \text{for all}\ t_1,\ldots,t_l\in\Lambda.
\end{align*}

Then, the result follows from \cite[Proposition 2.1]{BISS14}.
\end{proof}
The following theorem establishes that the subalgebra containment $p\Tilde{B}p\subseteq\mathcal{N}$ is with expectation.  
\begin{lem}
    There exists a normal state-preserving conditional expectation from $\mathcal{N}$ onto $p\Tilde{B}p$.
\end{lem}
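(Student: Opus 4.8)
The plan is to construct the conditional expectation as a compression of a conditional expectation at the level of the full ultraproduct $\mathcal{N}$, using the support projection $p$ of the ultraproduct state. First I would build, for each fixed $m$, the normal $(\varphi_{\tilde{Q}}\otimes\varphi_q)$-preserving conditional expectation $E_m$ from $\Gamma_{\tilde{Q}}(\R^m)''\,\overline{\otimes}\,\Gamma_q(\h_{\R}\otimes\R^m,U_t\otimes I_m)''$ onto the von Neumann subalgebra $B_m$ generated by $\{u_m(f_j):\ j\in\Lambda\}$. Such an expectation exists because $B_m$ is globally invariant under the modular automorphism group $(\sigma_t^m)_{t\in\R}$: indeed, by the computation in the proof of Lemma~\ref{Product}, $\sigma_t^m(u_m(\xi))=u_m(A^{it}\xi)$ for $\xi\in\h_{\C}^{an}$, and since $\h_{\C}^{an}$ is dense and the $u_m$ are bounded, $(\sigma_t^m)_{t\in\R}$ leaves $B_m$ invariant; Takesaki's theorem then yields the unique normal $(\varphi_{\tilde{Q}}\otimes\varphi_q)$-preserving conditional expectation $E_m:\ \Gamma_{\tilde{Q}}(\R^m)''\,\overline{\otimes}\,\Gamma_q(\cdots)''\to B_m$.

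Next I would pass to the ultraproduct. Since each $E_m$ is unital, completely positive and contractive, the family $(E_m)_{m}$ induces a well-defined unital completely positive map $E_\omega$ on the Banach-space ultraproduct $(M_m)_\omega$ (where $M_m$ denotes the $m$-th tensor product), and one checks it descends to a normal u.c.p. map $\mathcal{N}\to\mathcal{N}$ whose range is the weak$^*$-closure of $B=(B_m)$-type elements, which contains $\tilde{B}$; here I would invoke the standard fact that a state-preserving family of conditional expectations on $(M_m,\varphi_m)$ passes to a state-preserving normal conditional expectation on the Raynaud ultraproduct onto the ultraproduct of the ranges (this is where one uses that $E_m$ intertwines the states, hence is compatible with the GNS data used to build $\mathcal{N}$). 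Call the resulting normal $(\varphi_{\tilde{Q}}\otimes\varphi_q)_\omega$-preserving conditional expectation $E:\mathcal{N}\to\tilde{B}$.

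Finally I would compress by $p$: define $\mathcal{E}:=p\,E(\cdot)\,p$ on $\mathcal{N}$. Since $p$ is the support projection of the ultraproduct state $\phi:=(\varphi_{\tilde{Q}}\otimes\varphi_q)_\omega$, and $E$ preserves $\phi$, one has $E(p\cdot p)$-type compatibility and the range of $\mathcal{E}$ is $p\tilde{B}p$; that $\mathcal{E}$ is a genuine conditional expectation onto $p\tilde{B}p$ (i.e. the bimodule property $\mathcal{E}(axb)=a\mathcal{E}(x)b$ for $a,b\in p\tilde{B}p$) follows from the multiplicativity relation $pxyp=pxpyp$ for $x,y\in\tilde{B}$ established in Lemma~\ref{Product}, together with $\mathcal{E}|_{p\tilde{B}p}=\mathrm{id}$. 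State preservation is immediate: $\phi(\mathcal{E}(x))=\phi(pE(x)p)=\phi(E(x))=\phi(x)$ since $p$ is the support of $\phi$. Normality of $\mathcal{E}$ follows from normality of $E$ and of the compression maps.

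The main obstacle I anticipate is the middle step: verifying that the family $(E_m)$ genuinely induces a \emph{normal} conditional expectation on the Raynaud ultraproduct onto exactly $\tilde{B}$ (not merely a u.c.p. map into $\mathcal{N}$), since ultraproducts of von Neumann algebras behave badly with respect to weak$^*$-limits; the cleanest route is probably to use the $\phi$-preserving property of the $E_m$ to realize $E$ via the orthogonal projection of $H_\omega$ onto the closure of $\tilde{B}\xi_\omega$ and then check this projection implements a normal conditional expectation, exploiting that $E_m$ is the $L^2$-projection onto $\overline{B_m\xi_m}$ compatibly across $m$.
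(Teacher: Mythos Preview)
Your approach differs substantially from the paper's and, as you yourself suspect, the middle step does not go through cleanly.

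The paper's proof works entirely at the ultraproduct level and avoids any attempt to lift the individual $E_m$. It uses Raynaud's theorem (\cite{Ra02}) that the modular automorphism group of the state $\phi=(\varphi_{\tilde{Q}}\otimes\varphi_q)_\omega$ on $p\mathcal{N}p$ is exactly $(\sigma_t^m)_\omega$, then checks directly that $p\tilde{B}p$ is globally invariant under this group via the formula $\sigma_t^m(u_m(\xi))=u_m(A^{-it}\xi)$ (the same computation you quote from Lemma~\ref{Product}). Takesaki's theorem then gives the normal $\phi$-preserving conditional expectation $p\mathcal{N}p\to p\tilde{B}p$, and precomposing with the normal state-preserving compression $x\mapsto pxp$ yields the map $\mathcal{N}\to p\tilde{B}p$.

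Your route has a genuine gap. The ``standard fact'' you invoke---that a family of state-preserving conditional expectations $E_m:M_m\to B_m$ induces a normal conditional expectation on the Raynaud ultraproduct onto $\tilde{B}$---is not available in this form. The ultraproduct of the maps $E_m$ certainly gives a u.c.p.\ map on the $C^*$-ultraproduct $(M_m)_\omega$, but its natural range is (a subalgebra of) the Raynaud ultraproduct $\prod^\omega B_m$, which is typically \emph{much larger} than $\tilde{B}$: recall $\tilde{B}$ is the von Neumann algebra generated by the \emph{diagonal} elements $(u_m(f_j))_\omega$, whereas $\prod^\omega B_m$ contains arbitrary bounded sequences from the $B_m$. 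Extending the resulting map normally from $(M_m)_\omega$ to all of $\mathcal{N}$ is a separate, nontrivial issue. Your fallback suggestion---to realize $E$ as the $L^2$-projection onto $\overline{\tilde{B}\xi_\omega}$ and verify it implements a normal conditional expectation---is essentially Takesaki's theorem applied to $\tilde{B}\subseteq\mathcal{N}$, and to make that work you need precisely the modular invariance that the paper checks directly. So the detour through the $E_m$ buys nothing; the paper's argument is both shorter and complete.
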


\begin{proof}
   Since $p\Tilde{B}p$ is a subalgebra of $p\mathcal{N}p$, it follows from \cite{Ta72} that there is normal state-preserving conditional expectation from $p\mathcal{N}p$ onto $p\Tilde{B}p$ if and only if $p\Tilde{B}p$ is stable by the modular automorphism group of the ultraproduct state $(\varphi_{\tilde{Q}}\otimes \varphi_q)_{\omega}$. By \cite[Theorem 2.1]{Ra02}, the modular automorphism group with respect to the ultraproduct state is given by $(\sigma_t^m)_{\omega}$ $(t\in\R)$, where $\set{\sigma_t^m}_{t\in\R}$ is the modular automorphism of $\Gamma_{\tilde{Q}}(\R^m)^{\prime\prime}\overline{\otimes} \Gamma_q(\h_{\R}\otimes \R^m)^{\prime\prime}$. Recall from Eq.\ref{tensor auto} that $\sigma_t^m=id_{\Gamma_{\tilde{Q}}(\R^m)} \otimes\sigma_t^{\varphi_q}, \ (t\in\R)$ and for all $m$, $\sigma_t^m(u_m(\xi))=u_m(A^{-it}\xi)\ (\xi\in\h_{\C})$. Since $p=supp(\varphi_{\tilde{Q}}\otimes\varphi_q)_{\omega}$, we have
   \begin{align*}
        (\sigma_t^m)_{\omega}(p(u_m(\xi))_{\omega}p)&=
       p((\sigma_t^m(u_m(\xi)))_{\omega})p\\ &=p((u_m(A^{-it}\xi))_{\omega})p=p\Theta(W(A^{-it}\xi))p\in p\tilde{B}p. 
   \end{align*}
   Hence, our assertion follows as the map $x\mapsto pxp$ gives a normal state-preserving conditional expectation from $\mathcal{N}$ onto $p\mathcal{N}p$.\\
\end{proof}

For $l\in\N$ and $\xi_{t_1},\ldots,\xi_{t_l}\in\h_{\C}$, we define the following element of $\mathcal{N}$. 
\begin{align*}
W^{\omega}(\xi_{t_1}&\otimes\cdots\otimes\xi_{t_l})\\ \nonumber
&:=\left(m^{\frac{-l}{2}}\displaystyle\sum_{\substack{k:[l]\mapsto [m]\\ injective}}^m W(e_{k_1})\cdots W(e_{k_l})\otimes W(\xi_{t_1}\otimes e_{k_1})\cdots W(\xi_{t_l}\otimes e_{k_l})\right)_{\omega}.\\
\end{align*}

We consider only those terms when $m\geqslant l$. The representative of $W^{\omega}(\xi_{t_1}\otimes\cdots\otimes\xi_{t_l})$ can be taken as the sequence with first $l$ entries as `0', which are finitely many and rest of the terms are given by the above expression.

\begin{thm}\label{image}
For any $l\in\N$, let $\xi_{t_1},\ldots,\xi_{t_l}\in\h_{\C}$. Then
\begin{align*}
 \Theta (W(\xi_{t_1}\otimes\cdots\otimes\xi_{t_l}))=W^{\omega}(\xi_{t_1}&\otimes\cdots\otimes\xi_{t_l}).\\
 \end{align*}
\end{thm}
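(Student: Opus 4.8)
The plan is to verify the claimed identity on the generators and on Wick words by a direct computation, using the state-preserving isomorphism $\Theta$ from Theorem~\ref{Embedding} together with the GNS picture of $p\tilde B p$. First I would note that since $\Theta$ is a normal $*$-isomorphism onto $p\tilde B p$ intertwining the vacuum states, it extends to a unitary $U\colon \F_T(\h)\to \overline{p\tilde B p\,\xi_\omega}$ with $U\Omega=\xi_\omega$ and $U x\Omega = \Theta(x)\xi_\omega$ for $x\in\Gamma_T(\h_\R,U_t)''$. Hence, to prove $\Theta(W(\xi_{t_1}\otimes\cdots\otimes\xi_{t_l}))=W^\omega(\xi_{t_1}\otimes\cdots\otimes\xi_{t_l})$ it suffices to check two things: (i) the right-hand side genuinely lies in $p\tilde B p$ (equivalently, $p\,W^\omega(\cdots)\,p = W^\omega(\cdots)$ modulo the kernel, or that $W^\omega(\cdots)\in \tilde B$, which is clear from the definition since each finite-$m$ entry is a polynomial in the $u_m(f_j)$'s after expanding, once the $\xi_{t_k}$ are written in the basis $\{f_j\}$), and (ii) $W^\omega(\xi_{t_1}\otimes\cdots\otimes\xi_{t_l})\,\xi_\omega = U(\xi_{t_1}\otimes\cdots\otimes\xi_{t_l})$, i.e. the vector obtained by applying $W^\omega$ to the vacuum in the ultraproduct Hilbert space equals the image under $U$ of the length-$l$ tensor.

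For step (ii) I would compute $\big(W(e_{k_1})\cdots W(e_{k_l})\otimes W(\xi_{t_1}\otimes e_{k_1})\cdots W(\xi_{t_l}\otimes e_{k_l})\big)(\Omega\otimes\Omega)$ using the Wick formula (Lemma~\ref{wick pdt formula} / Eq.~\eqref{wick pdt2}): expanding each $W(\,\cdot\,)$ into creation and annihilation parts and applying to the vacuum, only the fully-creating term survives in each factor up to lower-order corrections, but the annihilation parts acting between adjacent factors produce exactly the inner-product contractions. The key point, exactly as in the proof of Theorem~\ref{Embedding}, is that the sum is over \emph{injective} $k\colon[l]\to[m]$, so in the limit $m\to\omega$ the normalization $m^{-l/2}$ kills every term in which two of the $e_{k_j}$'s get paired (such a pairing forces $\mathrm{ker}\,k\geq\nu$ for a nonempty $\nu$, reducing the number of free indices), leaving only the "no contraction in the first leg" term; for that term the first tensor leg contributes $e_{k_1}\otimes\cdots\otimes e_{k_l}$ and the second leg, after the surviving internal contractions, contributes the appropriate $q$-twisted combination, and summing $m^{-l/2}\sum_{k \text{ inj}}$ against $\langle e_{k_i}, e_{k_j}\rangle$ collapses everything to $\xi_{t_1}\otimes\cdots\otimes\xi_{t_l}$ with the correct coefficient. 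I would organize this as an induction on $l$: strip off $W^\omega(\xi_{t_1})=u(\xi_{t_1})$ (or rather the length-one element) from the left, show $u_m(\xi_{t_1})\cdot\big(m^{-(l-1)/2}\sum_{k'\text{ inj on }[2,l]}\cdots\big)$ equals, up to terms vanishing at $\omega$, the length-$l$ expression, using that the extra creation operator $l(e_{k_1})$ on the first leg is forced (any annihilation would have to pair $k_1$ with some $k_j$, killed by the normalization) and $W(\xi_{t_1}\otimes e_{k_1})$ acting by creation prepends $\xi_{t_1}\otimes e_{k_1}$.

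An alternative, cleaner route for step (ii): test against arbitrary vectors of the form $U\eta$ with $\eta\in\F_T^{\mathrm{finite}}(\h)$, i.e. compute $\langle \Theta(W(\zeta))\,\xi_\omega,\ W^\omega(\xi_{t_1}\otimes\cdots\otimes\xi_{t_l})\,\xi_\omega\rangle$ for simple tensors $\zeta$ and compare with $\langle W(\zeta)\Omega,\ \xi_{t_1}\otimes\cdots\otimes\xi_{t_l}\rangle_T$; using that $\Theta$ is state-preserving and $\Theta(W(\zeta)^* )\,\xi_\omega$ corresponds to $W(\zeta)^*\Omega$, both sides reduce to $(\varphi_{\tilde Q}\otimes\varphi_q)_\omega$ of products of the $u_m$'s, and the same $\mathrm{ker}\,k\geq\nu$ counting as in Theorem~\ref{Embedding} identifies the two. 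Since $\Theta(W(\zeta))\,\xi_\omega$ spans a dense subspace, this pins down the vector $W^\omega(\cdots)\,\xi_\omega$, hence (with (i), so that $W^\omega(\cdots)$ is affiliated to $p\tilde B p$ and determined by its action on $\xi_\omega$ via the GNS identification) the operator identity follows. The main obstacle I anticipate is bookkeeping: controlling the many cross-terms in the expansion of the product of Wick words and confirming rigorously that every term with a non-identity $\mathrm{ker}\,k$ pattern is annihilated by the $m^{-l/2}$ normalization in the $\omega$-limit, while the surviving term carries precisely the coefficient matching the $q$-deformed inner product; this is where I would be most careful, essentially re-running the combinatorial heart of the proof of Theorem~\ref{Embedding} at the level of vectors rather than scalars.
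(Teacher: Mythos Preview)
Your inductive framework is the same as the paper's---both arguments induct on $l$ and use the Wick recursion to peel off $W(\xi_{t_1})$ from the left---but there is a genuine gap in your step~(i), and without it the GNS argument does not close.

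You claim that $W^{\omega}(\xi_{t_1}\otimes\cdots\otimes\xi_{t_l})\in\tilde B$ is ``clear from the definition since each finite-$m$ entry is a polynomial in the $u_m(f_j)$'s.'' This is false: the $m$-th representative of $W^{\omega}$ is a sum over \emph{injective} multi-indices $k:[l]\to[m]$, whereas any monomial $u_m(f_{j_1})\cdots u_m(f_{j_l})$ unfolds into the sum over \emph{all} $k\in[m]^l$. The discrepancy is exactly the packet of ``error terms'' coming from collisions $k_1=k_i$, and these are \emph{not} expressible as polynomials in the $u_m$'s. In the paper these errors are called $D_{1,i,k}(m),\ D_{2,i,k}(m),\ D_{3,i,k}(m)$, and the technical heart of the proof (Lemma~\ref{D}, resting on Lemma~\ref{Bound} and the Khintchine-type inequalities in Eq.~\eqref{eq1}--\eqref{eq3}) is to show that $\|D(m)\|\to 0$, i.e.\ the errors vanish in \emph{operator norm}. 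Only then does $W^{\omega}(\xi_{t_1}\otimes\cdots\otimes\xi_{t_{l+1}})$ differ from the product $W^{\omega}(\xi_{t_1})\,W^{\omega}(\xi_{t_2}\otimes\cdots\otimes\xi_{t_{l+1}})$ by elements already known to lie in $p\tilde B p$, so that $W^{\omega}$ itself does.

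Your phrase ``killed by the normalization'' captures the correct heuristic, but at the level you invoke it---applying the operators to $\xi_\omega$ and counting free indices---you only obtain convergence of \emph{vectors}, not of operators. That would suffice if $\xi_\omega$ were separating for $\mathcal N$, but the ultraproduct state is not faithful; $\xi_\omega$ separates only $p\mathcal N p$. Hence even after establishing (ii) you cannot conclude $W^{\omega}=\Theta(W(\cdots))$ unless you already know $W^{\omega}\in p\mathcal N p$, which is precisely the content of (i). The argument is circular without the operator-norm estimates. If you want to salvage your route, you must either prove those norm bounds (and then you are doing the paper's proof) or give an independent reason why $pW^{\omega}=W^{\omega}p=W^{\omega}$---for instance by extending the analyticity criterion behind Lemma~\ref{Product} directly to the injective-sum representatives, which is plausible but is additional work you have not mentioned.
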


Before proving the Theorem \ref{image}, we go through some intermediate lemmas. Suppose $k : [l+1]\rightarrow [m]$ be a  multi-index, and then for  simplicity of notations, we introduce the following: 
\begin{enumerate}
    \item 
$D_{1,i, k}(m):=W(e_{k_1} \otimes\cdots\otimes e_{k_{l+1}})\otimes W((\xi_{t_1}\otimes e_{k_1})\otimes\cdots\otimes (\xi_{t_{l+1}}\otimes e_{k_{l+1}}))$ \\
\item 
$D_{2,i, k}(m):= W(e_{k_1} \otimes\cdots\otimes e_{k_{l+1}})\otimes W(\xi_{t_1}\otimes e_{k_1})\cdots\\
\null\hfill \cdots \widehat{W(\xi_{t_i}\otimes e_{k_i})}\cdots W(\xi_{t_{l+1}}\otimes e_{k_{l+1}})$,\\
\item 
$D_{3,i,k}(m):= W(e_{k_1})\cdots \widehat{W(e_{k_i})}\cdots W(e_{k_{l+1}}) \otimes W((\xi_{t_1}\otimes e_{k_1})\otimes\cdots\\\null\hfill\cdots\otimes (\xi_{t_{l+1}}\otimes e_{k_{l+1}}))$ \\

\item 
$D_{4,i,k}(m)
:= W(e_{k_1})\cdots \widehat{W(e_{k_i})}\cdots W(e_{k_{l+1}}) \otimes W(\xi_{t_1}\otimes e_{k_1})\cdots\\
\null\hfill\cdots \widehat{W(\xi_{t_i}\otimes e_{k_i})}\cdots W(\xi_{t_{l+1}}\otimes e_{k_{l+1}})$.\\

\item 
$D(m)=\displaystyle\sum_{i=2}^{l+1}m^{-\frac{l+1}{2}}\displaystyle\sum_{k_1=1}^{m}\sum_{\substack{k:[l+1]\setminus\{1\}\mapsto[m]\\ injective}}\Bigg( D_{1,i,k}(m)+\inner{\J\xi_{t_1}}{\xi_{t_i}}_Uq^{i-1}D_{2,i,k}(m) \\ \null\hfill +\left(\prod_{2\leqslant j\leqslant i-1}\tilde{q}_{k_i,k_j}\right)D_{3,i,k}(m)\Bigg),$

\end{enumerate}
where for a vector $\eta$, $\widehat{\eta}$ denotes the absence of the term $\eta$ in the expression.\\\\
Our aim will be to show that the sequence  $(D(m))$ represents the    $ 0$-element in the ultraproduct. We need a few lemmas to prove our assertion. 
\begin{lem}\label{Bound}
 Given the following choice of vectors $\xi_1,\ldots,\xi_l \in \h_{\C}$, there exists a constant 
 $D(l) > 0$, such that for any  $m\in \N$ and all $k :[l]\mapsto [m]$, the following inequalities hold:\\

\begin{enumerate}
    \item 

$\norm{W((\xi_1 \otimes e_{k_1})\otimes\cdots\otimes(\xi_{l} \otimes e_{k_l}))} \leqslant D(l),$  \\

\item $\norm{W(\xi_2 \otimes e_{k_1})\cdots\widehat{W(\xi_j\otimes e_{k_j})}\cdots W(\xi_{l} \otimes e_{k_l})}\leqslant D(l),$ \\

\item 
$\norm{W(e_{k_1})\cdots \widehat{W(e_{k_j})}\cdots W( e_{k_l})}\leqslant D(l).$ \\
\end{enumerate}

\end{lem}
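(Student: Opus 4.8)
The plan is to prove all three estimates by exhibiting a single constant $D(l)$ that depends only on $l$, the fixed vectors $\xi_1,\dots,\xi_l$, and the parameters $q$ and $\tilde q:=\sup_{i,j}\abs{\tilde q_{ij}}<1$, with no dependence on $m$ or on the multi-index $k$. The observation that drives everything is that the \emph{only} way $m$ and $k$ enter is through the basis vectors $e_{k_i}\in\R^m$, and because the orthogonal representation on $\R^m$ is the trivial one $I_m$, the analytic generator of $U_t\otimes I_m$ restricted to the $\R^m$-factor is the identity. Hence the deformed inner product on $\h_\C\otimes\C^m$ splits on simple tensors as $\inner{\cdot}{\cdot}_U\otimes\inner{\cdot}{\cdot}_{\C^m}$, giving $\norm{\xi_i\otimes e_{k_i}}_{U\otimes I_m}=\norm{\xi_i}_U$ and $\norm{e_{k_i}}=1$ in $\Gamma_{\tilde Q}(\R^m)^{\prime\prime}$, uniformly in $m$ and $k$. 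Moreover the complex conjugation on $\h_\C\otimes\C^m$ acts as $\J\otimes(\text{conjugation on }\C^m)$ and fixes each real vector $e_{k_i}$, so $\norm{\J(\xi_i\otimes e_{k_i})}_{U\otimes I_m}=\norm{\J\xi_i}_U$, which is a finite number for each fixed $\xi_i\in\h_\C\subseteq\h$ (even though $\J$ need not extend boundedly to all of $\h$).

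For (1) I would expand $W((\xi_1\otimes e_{k_1})\otimes\cdots\otimes(\xi_l\otimes e_{k_l}))$ by the Wick product formula of Lemma \ref{wick pdt formula} inside $\Gamma_q(\h_\R\otimes\R^m,U_t\otimes I_m)^{\prime\prime}$, regarded as the mixed construction for the one-block decomposition with the single scalar $q$, so that the coefficients $f_{(I,J)}$ are powers of $q$, of modulus $\leqslant 1$. This writes the Wick word as a sum of at most $2^l$ terms, each a product of exactly $l$ operators of the form $l(\xi_i\otimes e_{k_i})$ or $l^*(\J\xi_i\otimes e_{k_i})$. By Proposition \ref{bounded operators}(i), $\norm{l(\zeta)}\leqslant(1-q)^{-1/2}\norm{\zeta}_U$, and by (ii) of the same proposition $l^*(\zeta)=l(\zeta)^*$, so the annihilation factors obey the same bound (with $\zeta=\J\xi_i\otimes e_{k_i}$). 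Multiplying through and summing over the at most $2^l$ terms gives a bound of the shape $2^l(1-q)^{-l/2}\prod_{i=1}^l\max\set{\norm{\xi_i}_U,\norm{\J\xi_i}_U}$, which is independent of $m$ and $k$ and serves as $D(l)$ for this item.

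For (2) and (3) the Wick formula is not needed, since each factor is a length-one Wick word $W(\zeta)=l(\zeta)+l^*(\J\zeta)$, hence $\norm{W(\zeta)}\leqslant(1-q)^{-1/2}(\norm{\zeta}_U+\norm{\J\zeta}_U)$; for (3) this reads $\norm{W(e_{k_i})}\leqslant 2(1-\tilde q)^{-1/2}$, using that the deformed inner product on $\C^m$ is the standard one and $\J e_{k_i}=e_{k_i}$. Taking the product over the (at most $l-1$) surviving factors and using submultiplicativity of the operator norm together with the uniform estimates of the first paragraph yields bounds $\prod_{j=2}^{l}(1-q)^{-1/2}\bigl(\norm{\xi_j}_U+\norm{\J\xi_j}_U\bigr)$ for (2) and $\bigl(2(1-\tilde q)^{-1/2}\bigr)^{l-1}$ for (3), again with no dependence on $m$ or $k$. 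One then sets $D(l)$ equal to the maximum of the three constants produced.

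I expect the only genuine subtlety to be the bookkeeping around the conjugation $\J$: one must take care to invoke $\norm{\J\xi_i}_U<\infty$ (legitimate because $\xi_i\in\h_\C\subseteq\h$, while $\J$ itself is typically unbounded on $\h$), and to note that tensoring with the real vectors $e_{k_i}$ leaves these norms unchanged. Everything else reduces to the norm bound for creation operators in Proposition \ref{bounded operators} and the elementary fact that a length-$l$ Wick word expands into at most $2^l$ monomials in $l(\cdot),\,l^*(\cdot)$ — neither of which has any dependence on $m$ or $k$.
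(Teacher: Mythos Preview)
Your argument is correct. For items (2) and (3) it coincides with the paper's proof: both use $W(\zeta)=l(\zeta)+l^*(\J\zeta)$ together with Proposition~\ref{bounded operators} to bound each factor by $2(1-q)^{-1/2}\max\{\|\zeta\|_U,\|\J\zeta\|_U\}$ (resp.\ with $\tilde q$ in place of $q$) and then multiply.

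For item (1) the routes genuinely diverge. The paper invokes the Khintchine-type inequality of \cite[Theorem~3.10]{BKM21} with $K=\C$, which bounds $\|W(\zeta_1\otimes\cdots\otimes\zeta_l)\|$ by $C_q(l+1)\max_{0\leqslant j\leqslant l}\|(1\otimes S_\varphi)R^*_{l,j}(\zeta_1\otimes\cdots\otimes\zeta_l)\|$, and then observes that the latter quantity is a finite sum of simple tensor norms not depending on $m,k$. You instead expand $W(\zeta_1\otimes\cdots\otimes\zeta_l)$ directly via the Wick product formula of Lemma~\ref{wick pdt formula} into at most $2^l$ monomials in $l(\cdot),l^*(\cdot)$ with coefficients of modulus $\leqslant 1$ (here the single-block $q$-case makes each $f_{(I,J)}$ a power of $q$), and bound each factor by Proposition~\ref{bounded operators}. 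Your approach is more elementary---it avoids the operator-space Khintchine machinery entirely---at the cost of a cruder constant $2^l(1-q)^{-l/2}\prod_i\max\{\|\xi_i\|_U,\|\J\xi_i\|_U\}$ versus the paper's $C_q(l+1)\cdot(\text{tensor norm})$; since only existence of \emph{some} $D(l)$ is needed, this is immaterial. Your remark that $\|\J\xi_i\|_U<\infty$ because $\xi_i\in\h_\C$ (where $\J$ is the ordinary complex conjugation, bounded on $\h_\C$) is exactly the point that makes the estimate go through.
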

\begin{proof}

Recall that for $\xi\in\h_{\C}$, we have $W(\xi)= l(\xi)+l^*(\J\xi)$. Hence, $\norm{W(\xi)}\leqslant\norm{l(\xi)}+\norm{l^*(\J\xi) }\leqslant 2(1-q)^{-1/2}\max\set{\norm{\xi}, \norm{\J\xi}}.$ So 
\begin{align*}
   \|W(\xi_2 \otimes e_{k_2})\cdots&\widehat{W(\xi_j\otimes e_{k_j})}\cdots W(\xi_{l} \otimes e_{k_l})\| \\ &
   \leqslant (\frac{2^{l-1}}{(1-q)^{(l-1)/2}})\max\set{\norm{\xi_2},\norm{\J\xi_2}}\cdots\max\set{\norm{\xi_l},\norm{\J\xi_l}}.
\end{align*}
Note that the right-hand side of the inequality is a constant only depending upon $l$ and the vectors $\xi_1,\ldots,\xi_l \in \h_{\C},$ establishing the second inequality. Similar arguments can be used to obtain the third inequality.

Now, for the first inequality, we use the Khintchine type inequality from \cite[Theorem 3.10]{BKM21} with $K = \C$.  It follows that 
\begin{align*}
    \|W((\xi_1 \otimes e_{k_1}) & \otimes\cdots\otimes(\xi_{l} \otimes e_{k_l}))\| \\ & \leqslant C_q (l+1) \max_{0\leqslant j\leqslant l}\norm{ (1 \otimes S_{\varphi})(R^*_{l,j} ((\xi_1 \otimes e_{k_1} )\otimes\cdots\otimes(\xi_l\otimes e_{k_l} )))}.
\end{align*}

After writing the formula for $R^*_{l,j}$, the term $(1 \otimes S_{\varphi})(R^*_{l,j} ((\xi_1 \otimes e_{k_1} )\otimes\cdots\otimes(\xi_l\otimes e_{k_l} )))$ can be written as sums of simple tensors. We can see that the norms of each term in the sum are bounded by a constant depending only on $l$.

\end{proof}

We fix some vectors $\xi_1,\xi_2,\ldots,\xi_l\in\h_{\C}$ for some $l\in\N$ and let $m\in\N$ be such that $m\geqslant l$. For some $1\leqslant i\leqslant l$, we denote $\Lambda_{i} \subseteq [m]^l$ by the set of ordered pairs
$k=(k_1,k_2,\ldots,k_l)\in [m]^l$ where $k_j$'s, for $1\leqslant j\neq i\leqslant l$, are distinct   except $k_1 = k_i.$ Let $\abs{\Lambda_i}$ denote the number of elements in $\Lambda_i$. 

Now
we use the similar idea as \cite[Proposition 4.5]{ABW18} to prove our next lemma. We remark that in \cite[Proposition 4.5]{ABW18}, it was proved for $q$-Gaussian von Neumann algebras, but we do it for mixed $q$-Gaussian von Neumann algebras.\\

\begin{lem}
   Let $K$ be a Hilbert space  and $(A_{k})_{k\in \Lambda_i} \subset \B(K)$ be any family of operators. For any $ J\in \Pi_{l,p}$, there exist a constant $C(l)$ (dependent only on $l$) such that \\
\begin{equation}\label{eq1}
\norm{\sum_{k\in\Lambda_i}A_{k} \otimes (e_{k_{J^c}}^{\otimes(l-p)}\otimes e_{k_J}^{\otimes p})} \leq C(l) m^{ l/2}\sup_{k\in\Lambda_i}\norm{A_{k}},\\
\end{equation}
where the norm is calculated in $\B(K)\otimes_{min} (\h^{\otimes_T^{l-p}})_c\otimes_{min} (\h^{\otimes_T^p})_r $. Further, it follows that  
 
\begin{equation}\label{eq2}
\norm{\displaystyle\sum_{k \in\Lambda_i}A_{k} \otimes W(e_{k_1}\otimes\cdots\otimes e_{k_l})}\leqslant C(l)m^{l/2}\sup_{k\in\Lambda_i}\norm{A_{k}}. \\
\end{equation}
\end{lem}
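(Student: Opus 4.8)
The plan is to prove \eqref{eq1} first and then deduce \eqref{eq2} from it via the Khintchine-type inequality of \cite[Theorem 3.10]{BKM21}, exactly as in the structure of \cite[Proposition 4.5]{ABW18}. For \eqref{eq1}, the key observation is that the vectors $e_{k_{J^c}}^{\otimes(l-p)}\otimes e_{k_J}^{\otimes p}$, as $k$ ranges over $\Lambda_i$, are \emph{almost orthogonal}: since the indices $k_j$ are distinct for $j\neq i$ and $k_1=k_i$, each such tensor is a standard basis vector of $\h^{\otimes(l-p)}\otimes\h^{\otimes p}$ (up to the norming constants coming from $\|\cdot\|_T$, which are uniformly comparable to the Euclidean norm because $\|T\|<1$; see Proposition~\ref{bounded operators} and the discussion around \eqref{Twisted inner product}), and two distinct $k\in\Lambda_i$ can give the same basis vector only in boundedly many ways (bounded in terms of $l$ only, since the ``collisions'' are governed by which of the $l$ slots coincide). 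First I would make this precise: partition $\Lambda_i$ according to $\ker k$ and according to which coordinates land in $J$ versus $J^c$; on each piece the map $k\mapsto (e_{k_{J^c}}^{\otimes(l-p)}\otimes e_{k_J}^{\otimes p})$ is at most $C(l)$-to-one onto an orthonormal-up-to-constants family. Then the left side of \eqref{eq1} is, up to the constant $C(l)$ absorbing the multiplicity and the $\|\cdot\|_T$ distortion, a block-diagonal operator in $M_{\cdot}(\B(K))$ whose column/row norm is controlled: writing $x=\sum_{k}A_k\otimes v_k$ with $(v_k)$ essentially orthonormal in $(\h^{\otimes_T^{l-p}})_c\otimes_{\min}(\h^{\otimes_T^p})_r$, one has $\|x\|\le C(l)\,\|\sum_k A_k^*A_k\|^{1/2}$-type bounds via the identification in Remark~\ref{isometric}, and since $|\Lambda_i|\le C(l)m^{l-1}\le C(l) m^{l}$ — wait, more carefully $|\Lambda_i|\le \binom{m}{l-1}(l-1)!\le m^{l-1}$, so that $\|\sum_k A_k^*A_k\|\le |\Lambda_i|\sup_k\|A_k\|^2\le m^{l-1}\sup_k\|A_k\|^2$, giving the bound $C(l)m^{(l-1)/2}\sup_k\|A_k\|$; one then checks that the claimed exponent $m^{l/2}$ in \eqref{eq1} is a (weaker, hence safe) upper bound. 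Actually I would keep the sharper exponent available and simply note $m^{(l-1)/2}\le m^{l/2}$ to match the statement.

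For the passage to \eqref{eq2}: by \cite[Theorem 3.10]{BKM21} applied with the Hilbert space $K$ and $n=l$, the operator $\sum_{k\in\Lambda_i}A_k\otimes W(e_{k_1}\otimes\cdots\otimes e_{k_l})$ — which is $(\mathrm{Id}\otimes s)$ applied to $\sum_k A_k\otimes(e_{k_1}\otimes\cdots\otimes e_{k_l})$, an element of $\B(K)\otimes_{\min}(\C^m)^{\otimes_T^l}$ (here using $s(e_{k_1}\otimes\cdots\otimes e_{k_l})=W(e_{k_1}\otimes\cdots\otimes e_{k_l})$ since the $e_{k_j}\in\R^m$, as in \cite[Lemma 3.7]{BKM21}) — has norm bounded by $C_q(l+1)\max_{0\le p\le l}\|(1\otimes S_\varphi)(R^*_{l,p}(\sum_k A_k\otimes(e_{k_1}\otimes\cdots\otimes e_{k_l})))\|$ in the appropriate column-row minimal tensor norms. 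Expanding $R^*_{l,p}$ by the explicit formula \eqref{Op R}, the argument of the max is a finite sum (over $J\in\Pi_{l,p}$) of terms of the form $\sum_k f_{(J^c,J)}(q_{ij})\, A_k\otimes (e_{k_{J^c}}^{\otimes(l-p)}\otimes e_{k_J}^{\otimes p})$, with $|f_{(J^c,J)}(q_{ij})|\le 1$ and $|\Pi_{l,p}|\le 2^l$; applying $S_\varphi$ on the second leg only reshuffles/reflects basis vectors (Lemma~\ref{Modular}(i)) and does not increase the norm beyond a constant depending on $l$. Each such term is exactly of the form bounded in \eqref{eq1}, so we get \eqref{eq2} with a new constant $C(l)$.

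The main obstacle I anticipate is the bookkeeping in \eqref{eq1}: making precise the ``almost orthogonality plus bounded multiplicity'' statement uniformly in $m$, and tracking how the constraint $k\in\Lambda_i$ (namely $k_1=k_i$ and the other coordinates distinct) interacts with splitting the $l$ slots into the $J$-block and $J^c$-block — in particular ensuring the multiplicity bound and the norm distortion from passing between $\|\cdot\|_{\F(\h)}$ and $\|\cdot\|_T$ are both $O_l(1)$ and independent of $m$. Once that combinatorial-geometric core is isolated, the rest is a routine application of Remark~\ref{isometric}, Remark~\ref{norm estimation}, and the Khintchine inequality \cite[Theorem 3.10]{BKM21}.
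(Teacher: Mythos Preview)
Your proposal is correct and follows essentially the same route as the paper: pass from the free Fock norm to the $T$-deformed norm via the bounded operators $(P^{(l-p)})^{1/2}\otimes(P^{(p)})^{1/2}$ and the homogeneity of row/column Hilbert spaces, identify the resulting tensors with matrix units through Remark~\ref{isometric}, apply a Hilbert--Schmidt bound together with $|\Lambda_i|\le m^{l}$, and then deduce \eqref{eq2} from \eqref{eq1} using the Khintchine inequality \cite[Theorem 3.10]{BKM21} and the explicit formula \eqref{Op R} for $R^*_{l,p}$. One simplification you can make: the multiplicity bookkeeping you anticipate is unnecessary, because for fixed $J\in\Pi_{l,p}$ the assignment $k\mapsto(k_{J^c},k_J)$ is just a fixed permutation of the coordinates of $k$ and is therefore injective, so distinct $k\in\Lambda_i$ already give distinct matrix units and no partitioning by $\ker k$ is needed.
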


\begin{proof}
  We fix a $J\in\Pi_{l,p}$ first. Recall that $J$ is an ordered pair $(j(1),\ldots,j(p))\in[l]^p$ with $1\leqslant j(1)< \cdots <j(p)\leqslant l$ and $J^c=(i(1),\ldots,i(l-p))\in[l]^{l-p}$ with $1\leqslant i(1)< \cdots <i(l-p)\leqslant l$. Then, for each $k = (k_1,k_2,\ldots,k_l)\in\Lambda_i$, $k_J$ denotes the tuple $(k_{j(1)},\ldots,k_{j(p)})$ and $k_{J^c}=(k_{i(1)},\ldots,k_{i(l-p)})$. Thus, the tensor $e_{k_{J^c}}^{\otimes(l-p)}\otimes e_{k_J}^{\otimes p}$ is of the form $e_{k_{i(1)}}\otimes\cdots\otimes e_{k_{i(l-p)}} \otimes e_{k_{j(1)}}\otimes\cdots\otimes e_{k_{j(p)}}$. Clearly, the collection of vectors $\set{e_{k_{J^c}}^{\otimes(l-p)}:\ k\in\Lambda_i}$ forms an orthonormal set in $\h^{\otimes{l-p}}$ and similarly, $\set{e_{k_J}^{\otimes p}:\ k\in\Lambda_i}$ is an orthonormal set in  $\h^{\otimes{p}}$. In fact, this is true for any $J\in\Pi_{l,p}.$ Since, for given $k=(k_1,\ldots,k_l)$, application of $J$ gives the tuples $k_J=(k_{i(1)},\ldots,k_{i(l-p)})$ and $k_{J^c}=(k_{j(1)},\ldots,k_{j(p)})$.  Therefore, we can identify $\set{e_{k_{J^c}}^{\otimes(l-p)}:\ k\in\Lambda_i}$ with a different orthonormal set $(v_a)_{a\in A}\subseteq \h^{\otimes{l-p}}$ and  $\set{e_{k_{J}}^{\otimes p}:\ k\in\Lambda_i}$ with a orthonormal set $(w_b)_{b\in B}\subseteq \h^{\otimes p}$. This implies  $ e_{k_{J^c}}^{\otimes(l-p)}\otimes e_{k_J}^{\otimes p}=v_a\otimes w_b$, for some $a\in A$ and $b\in B$.  
  
Note that we are dealing with  deformed  Hilbert spaces 
$\h^{\otimes_T^{l-p}}$ and $\h^{\otimes_T^ p}$ that are deformations of the spaces $\h^{\otimes{l-p}}$ and $\h^{\otimes{p}}$ given by the operators $P_T^{l-p}$ and $P_T^{p}$,
respectively. So, let $\eta_{k}$ be the vector in $\h^{\otimes_T^{l-p}}\otimes\h^{\otimes_T^p} $ such that $v_a\otimes w_b = (P_T^{l-p})^{1/2}\otimes (P_T^{p})^{1/2}(\eta_{k} )$.  Let $(v^{\prime}_a)_{a\in A}\subseteq \h^{\otimes_T^{l-p}}$ and $(w^{\prime}_b)_{b\in B}\subseteq \h^{\otimes_T^p}$ be orthonormal subsets and write $\eta_{k} = v_a^{\prime}\otimes w_b^{\prime}$ for some $a\in A$ and $b\in B$. Since both the row and column Hilbert spaces are homogeneous operator spaces, and the minimal tensor product of cb maps is cb again, which follows as a consequence of \cite[Eq 2.1.3]{P03},  we have the following bound:
\begin{align*}
    \max_{0\leqslant p\leqslant l}\norm{\sum_{k\in\Lambda_i}A_{k} \otimes (e_{k_{J^c}}^{\otimes(l-p)}\otimes e_{k_J}^{\otimes p})}\leqslant C(l)\max_{0\leqslant p\leqslant l}\norm{\sum_{k\in\Lambda_i}A_{k} \otimes \eta_{k}},
\end{align*} 
where $C(l)$ is coming from the bounds of the operators $(P_T^{l-p})^{1/2}$ and $(P_T^{p})^{1/2}$.

We have  $(\h^{\otimes_T^{ l-p}})_{c}\otimes_{min} (\h^{\otimes_T^p})_r\simeq(\h^{\otimes_T^{ l-p}})_{c}\otimes_h (\h^{\otimes_T^p})_r\simeq \mathcal{K}(\h^{\otimes_T^p},\h^{\otimes_T^{ l-p}})$, which follows from Remark \ref{isometric}. Using these complete isomorphism in Remark \ref{isometric}(2), we can view the vector $\eta_{k} $ as a matrix units in $\mathcal{K}(\h^{\otimes_T^p},\h^{\otimes_T^{ l-p}})$. So the term $A_{k} \otimes\eta_{k}$ can be realized as a matrix with all its entries as $A_{k}$. Using the relation between the operator norm and the Hilbert-Schmidt norm, we get
\begin{align*}
 \norm{\sum_{\Lambda_i}A_{k}\otimes (e_{k_1}\otimes\cdots\otimes e_{k_l}) }\leqslant C(l)\left(\sum_{\Lambda_i}\norm{A_{k}}^2\right)^{1/2}&\leqslant C(l)\left(|\Lambda_i|\sup_{k\in\Lambda_i}\norm{A_{k}}^2\right)^{1/2}
\end{align*}
Recall, that $\Lambda_{i}$ is the set of ordered pairs $(k_1,k_2,\ldots,k_l)\in [m]^l$, where $k_j$’s are distinct  except $k_1 = k_i.$ Hence, it follows that $\abs{\Lambda_i}\leqslant m^l$, which in turn gives the following bound.
 \begin{align*}
    \norm{\sum_{\Lambda_i}A_{k}\otimes (e_{k_1}\otimes\cdots\otimes e_{k_l}) }\leqslant C(l)\left(m^l\sup_{k\in\Lambda_i}\norm{A_{k}}^2\right)^{1/2}=C(l) m^{l/2}\sup_{k\in\Lambda_i}\norm{A_{k}},
 \end{align*}
 which is the inequality $(\ref{eq1})$.


Now, for the second part of the lemma, notice that $W(e_{k_1}\otimes\cdots\otimes e_{k_l})$'s are elements of the mixed $q$-Gaussian factor $\Gamma_{\tilde{Q}}(\R^m)^{\prime\prime}$. Consider the element $\sum_{k\in\Lambda_i}A_{k} \otimes (e_{k_1}\otimes\cdots\otimes e_{k_l})\in \B(K)\otimes_{min}\h^{\otimes_T ^l}$, where $\h=\C^{m}$. It follows from \cite[Theorem 3.10]{BKM21} that
\begin{align*}
&\norm{\displaystyle\sum_{k\in\Lambda_i}A_{k} \otimes W(e_{k_1}\otimes\cdots\otimes e_{k_l})}\\ & \qquad\qquad\qquad\qquad \leqslant  C_q(l+1)
\max_{0\leqslant p\leqslant l}\norm{\displaystyle\sum_{k\in\Lambda_i}A_{k} \otimes R^*_{l,m}(e_{k_1}\otimes\cdots\otimes e_{k_l})},
\end{align*}
where the norm in the right-hand side is taken in $\B(K)\otimes_{min} (\h^{\otimes_T^{l-p}})_c\otimes_{min} (\h^{\otimes_T^p})_r $. Hence, in order to establish the inequality $(18)$, it is enough to get an estimate for the term $\max_{0\leqslant p\leqslant l}\norm{\sum_{k\in\Lambda_i}A_{k} \otimes R^*_{l,p}(e_{k_1}\otimes\cdots\otimes e_{k_l})}.$  Recall from Eq.\ref{Op R} that
\begin{align*}
R^*_{l,p}(e_{k_1}\otimes\cdots\otimes e_{k_l})= \displaystyle\sum_{J\in \Pi_{l,p}} f_{(J^c,J)}(\tilde{q}_{i,j} )(e_{k_{J^c}}^{\otimes(l-p)}\otimes e_{k_J}^{\otimes p}). 
\end{align*}
 In fact, the operator  $R^*_{l,p}$ viewed as an operator from $\h^{\otimes_T^{l}} $ to $\h^{\otimes_T^{l-p}}\otimes \h^{\otimes_T^p}$ is bounded, which follows from \cite[Lemma 3.6(i)]{BKM21}. Note that $R^*_{l,p}(e_{k_1}\otimes\cdots\otimes e_{k_l})$ can be realized as an element of $ (\h^{\otimes_T^{l-p}})_c\otimes_{min} (\h^{\otimes_T^p})_r$. This implies  
\begin{align*}
    &\norm{\sum_{k\in\Lambda_i}A_{k} \otimes R^*_{l,p}(e_{k_1}\otimes\cdots\otimes e_{k_l})}\\ &\qquad\qquad\qquad\qquad= \norm{\sum_{k\in\Lambda_i}A_{k} \otimes \displaystyle\sum_{J\in \Pi_{l,p}} f_{(J^c,J)}(\tilde{q}_{i,j} )(e_{k_{J^c}}^{\otimes(l-p)}\otimes e_{k_J}^{\otimes p})}\\
    &\qquad\qquad\qquad\qquad\leqslant \displaystyle\sum_{J\in \Pi_{l,p}} \abs{f_{(J^c,J)}(\tilde{q}_{i,j} )}\norm{\sum_{k\in\Lambda_i}A_{k} \otimes (e_{k_{J^c}}^{\otimes(l-p)}\otimes e_{k_J}^{\otimes p})}
\end{align*}
However, from the first part, we have an estimate for $\norm{\sum_{k\in\Lambda_i}A_{k} \otimes (e_{k_{J^c}}^{\otimes(l-p)}\otimes e_{k_J}^{\otimes p})},$ for any $ J\in \Pi_{l,p}$. Recall that the bound is independent of $J$. Hence, 
\begin{align*}
    \norm{\sum_{k\in\Lambda_i}A_{k} \otimes R^*_{l,p}(e_{k_1}\otimes\cdots\otimes e_{k_l})}\leqslant m^{1/2}C(l)\sup_{k\in\Lambda_i}\norm{A_{k}}\displaystyle\sum_{J\in \Pi_{l,p}} \abs{f_{(J^c,J)}(\tilde{q}_{i,j} )}.
\end{align*}
Clearly, $\sum_{J\in \Pi_{l,p}} \abs{f_{(J^c,J)}(\tilde{q}_{i,j} )}$ is summable and the sum is a constant only dependent on $l$. 
Multiplying the sum with $C(l)$, we get a constant dependent only on $l$, which we write as $C(l)$ again, with a slight abuse of notation. Therefore, we have
\begin{align*}
    \norm{\sum_{k\in\Lambda_i}A_{k} \otimes R^*_{l,p}(e_{k_1}\otimes\cdots\otimes e_{k_l})}\leqslant m^{1/2}C(l)\sup_{k\in\Lambda_i}\norm{A_{k}}.
\end{align*}
\end{proof}


We write the following lemma for reference, although the proof follows from \cite[Proposition 4.5]{ABW18}.
\begin{lem}
Let $K$ be a Hilbert space  and $(A_{k})_{k\in \Lambda_i} \subset \B(K)$ be any family of operators. Then, we have the following inequalities.

\begin{equation}\label{eq3}
\norm{\displaystyle\sum_{k\in\Lambda_i}A_{k} \otimes W((\xi_1\otimes e_{k_1})\otimes\cdots\otimes (\xi_l\otimes e_{k_l}))}\leqslant C(l)\sup_{k\in\Lambda_i}\norm{A_{k}}m^{l/2},  
\end{equation}

where $C(l) > 0$ depends only on $d$ and the choice of vectors $\xi_1,\xi_2,\ldots,\xi_l \in\h_{\C}$.
    
\end{lem}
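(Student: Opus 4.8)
The plan is to reduce inequality \eqref{eq3} to the already-established inequality \eqref{eq2} by an application of the Khintchine-type inequality of \cite[Theorem 3.10]{BKM21}, exactly as in \cite[Proposition 4.5]{ABW18}, but now on the mixed $q$-Gaussian side. First I would note that $W((\xi_1\otimes e_{k_1})\otimes\cdots\otimes(\xi_l\otimes e_{k_l}))$ is a length-$l$ Wick word in the $q$-Araki-Woods factor $\Gamma_q(\h_{\R}\otimes\R^m,U_t\otimes I_m)^{\prime\prime}$, so that
\[
\displaystyle\sum_{k\in\Lambda_i}A_k\otimes W((\xi_1\otimes e_{k_1})\otimes\cdots\otimes(\xi_l\otimes e_{k_l}))
=(\mathrm{Id}\otimes s)\Big(\displaystyle\sum_{k\in\Lambda_i}A_k\otimes\big((\xi_1\otimes e_{k_1})\otimes\cdots\otimes(\xi_l\otimes e_{k_l})\big)\Big),
\]
and apply \cite[Theorem 3.10]{BKM21} with $K$ replaced by $K$ itself to bound the left-hand side by $C_q(l+1)\max_{0\leqslant p\leqslant l}\norm{\sum_{k\in\Lambda_i}A_k\otimes R^*_{l,p}\big((\xi_1\otimes e_{k_1})\otimes\cdots\otimes(\xi_l\otimes e_{k_l})\big)}$, where the norm is in $\B(K)\otimes_{min}(\h_q^{\otimes_T^{l-p}})_c\otimes_{min}(\h_q^{\otimes_T^p})_r$ and $\h_q=\h\otimes\R^m$.

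Next I would expand $R^*_{l,p}$ via \eqref{Op R}: each term is $f_{(J^c,J)}(q)$ times a tensor of the form $(\xi_{i(1)}\otimes e_{k_{i(1)}})\otimes\cdots\otimes(\xi_{i(l-p)}\otimes e_{k_{i(l-p)}})\otimes(\xi_{j(1)}\otimes e_{k_{j(1)}})\otimes\cdots\otimes(\xi_{j(p)}\otimes e_{k_{j(p)}})$. The key point is that the $\xi$-part is a fixed tensor of norm at most a constant $D(l)$ depending only on $l$ and $\xi_1,\ldots,\xi_l$ (Lemma \ref{Bound}(1)), while the $e_{k}$-part is exactly the orthonormal-in-disguise system already analyzed: $\{e_{k_{J^c}}^{\otimes(l-p)}:k\in\Lambda_i\}$ is an orthonormal set in $\h^{\otimes(l-p)}$ and $\{e_{k_J}^{\otimes p}:k\in\Lambda_i\}$ is one in $\h^{\otimes p}$, so after absorbing the bounded operators $(P_T^{l-p})^{1/2}$, $(P_T^{p})^{1/2}$ and using the identification $(\h^{\otimes_T^{l-p}})_c\otimes_{min}(\h^{\otimes_T^p})_r\simeq\mathcal{K}(\h^{\otimes_T^p},\h^{\otimes_T^{l-p}})$ from Remark \ref{isometric}, the resulting sum is a matrix with Hilbert-Schmidt norm controlled by $(|\Lambda_i|)^{1/2}\sup_k\norm{A_k}$. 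Since $|\Lambda_i|\leqslant m^l$ and the tensor factoring $\xi\otimes e_k$ only contributes the constant $D(l)$, each summand over $J$ is at most $C(l)m^{l/2}\sup_k\norm{A_k}$, and summing over the finitely many $J\in\Pi_{l,p}$ and over the finitely many $p$ keeps the bound of the form $C(l)m^{l/2}\sup_{k\in\Lambda_i}\norm{A_k}$; absorbing $C_q(l+1)$ and the $J$-dependent constants $|f_{(J^c,J)}(q)|$ into $C(l)$ gives \eqref{eq3}.

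The only genuine subtlety — and the reason this needs to be stated separately from the previous lemma rather than simply being a corollary — is handling the tensor legs of the form $\xi_r\otimes e_{k_r}$ rather than bare $e_{k_r}$: one must check that the vectors $\{(\xi_1\otimes e_{k_1})\otimes\cdots\otimes(\xi_l\otimes e_{k_l}):k\in\Lambda_i\}$, or rather the pieces appearing after applying $R^*_{l,p}$, can still be grouped into a column system tensored with a row system whose cross norm is governed by the number of distinct multi-indices, uniformly in the $\xi_r$'s. This is precisely where the uniform bound of Lemma \ref{Bound}(1) enters, and where one invokes that $\{e_{k_r}\}$ (not $\{\xi_r\otimes e_{k_r}\}$) are the orthonormal vectors driving the counting — the $\xi_r$'s ride along as fixed bounded coefficients. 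Once this bookkeeping is done exactly as in \cite[Proposition 4.5]{ABW18}, the estimate follows, so I would simply remark that the argument is identical to that of \cite[Proposition 4.5]{ABW18} with the single change of replacing the $q$-Gaussian Khintchine inequality by \cite[Theorem 3.10]{BKM21} for mixed $q$-Gaussians, and omit the repeated details.
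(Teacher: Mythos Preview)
Your argument is correct, but you have slightly overcomplicated the situation by misidentifying which algebra is in play. The Wick words $W((\xi_1\otimes e_{k_1})\otimes\cdots\otimes(\xi_l\otimes e_{k_l}))$ live in the \emph{single-parameter} $q$-Araki--Woods factor $\Gamma_q(\h_{\R}\otimes\R^m,U_t\otimes I_m)^{\prime\prime}$, not on any ``mixed $q$-Gaussian side''. Consequently \cite[Proposition~4.5]{ABW18} applies to them verbatim, with no modification whatsoever --- and that direct citation is exactly the paper's one-sentence proof. Your version re-runs the ABW18 argument and substitutes the mixed-$q$ Khintchine inequality of \cite[Theorem~3.10]{BKM21} for the ordinary $q$-one; this does no harm (the single-$q$ case is a specialization of the mixed one), but it obscures the point that this lemma is a straight quotation rather than a new estimate. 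It is the \emph{previous} lemma (inequalities \eqref{eq1} and \eqref{eq2}) that genuinely requires the mixed-$q$ Khintchine inequality, since there the Wick words $W(e_{k_1}\otimes\cdots\otimes e_{k_l})$ sit in $\Gamma_{\tilde{Q}}(\R^m)^{\prime\prime}$.
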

\begin{proof}
 Recall that $W((\xi_1\otimes e_{k_1})\otimes\cdots\otimes (\xi_l\otimes e_{k_l})$'s are elements of $ \Gamma_q(\h_{\R}\otimes \R^m, U_{t}\otimes I_m)^{\prime\prime},$ which is $q$-Araki Woods factor. Hence, the inequality will follow from the second inequality of \cite[Proposition 4.5]{ABW18}.
 \end{proof}

\begin{lem}\label{D}
 The sequence $(D(m))_m$ represents the $0$-element in the ultraproduct $\mathcal{N}$.   
\end{lem}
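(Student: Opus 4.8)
The plan is to show that $\norm{D(m)}$ decays, so that $\lim_{m\to\omega}\norm{D(m)}=0$, which is precisely the statement that $(D(m))_m$ represents $0$ in $\mathcal{N}$. Recall that $D(m)$ is a triple sum over $i\in\{2,\ldots,l+1\}$ of expressions involving $D_{1,i,k}(m)$, $D_{2,i,k}(m)$, and $D_{3,i,k}(m)$, with $k_1$ ranging over $[m]$ and $k$ restricted to injective maps $[l+1]\setminus\{1\}\to[m]$ together with the constraint $k_1=k_i$ (since these terms arise from collapsing the index $k_1$ onto $k_i$ in the product defining $W^\omega$). Thus for each fixed $i$, the index set of the inner sums is exactly $\Lambda_i\subseteq[m]^{l+1}$ (after relabelling $[l+1]$ as $[l]$ in the obvious way, or keeping $l+1$ throughout), and the normalization factor is $m^{-(l+1)/2}$.

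The key steps are as follows. First I would observe that, after factoring out the scalars $\inner{\J\xi_{t_1}}{\xi_{t_i}}_U q^{i-1}$ and $\prod_{2\le j\le i-1}\tilde q_{k_i,k_j}$ (all of which are bounded in modulus uniformly in $k$ and $m$), each of the three families of terms has the shape $\sum_{k\in\Lambda_i} A_k\otimes B_k$ where $A_k$ lives in the mixed $q$-Gaussian factor $\Gamma_{\tilde Q}(\R^m)^{\prime\prime}$ and $B_k$ lives in the $q$-Araki-Woods factor, and the first tensor leg is either $W(e_{k_1}\otimes\cdots\otimes e_{k_{l+1}})$ (for $D_1$, $D_2$) or a product $W(e_{k_1})\cdots\widehat{W(e_{k_i})}\cdots W(e_{k_{l+1}})$ (for $D_3$), while the second tensor leg is bounded \emph{in operator norm} uniformly in $k$ by Lemma \ref{Bound}. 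For the $D_3$ terms the first leg is a product of $l$ creation-plus-annihilation operators, hence bounded in operator norm uniformly by $D(l)$ as well; for those terms one then just uses the triangle inequality and $|\Lambda_i|\le m^{l+1}$... which is not good enough, so instead I would treat $D_3$ also via the matrix/row-column estimate, viewing $W(e_{k_1})\cdots\widehat{W(e_{k_i})}\cdots W(e_{k_{l+1}})$ as acting on the tensor product and applying the same Khintchine-type bound of Lemma applied with $l$ replaced by $l$ (one fewer factor). The point is that in all three cases the Khintchine-type inequality of \cite[Theorem 3.10]{BKM21}, combined with inequality \eqref{eq2} (resp.\ \eqref{eq3}) of the preceding lemmas, yields a bound of the form $C(l)\,m^{(l+1)/2}\sup_{k\in\Lambda_i}\norm{A_k}$ or $C(l)\,m^{(l+1)/2}\sup_{k\in\Lambda_i}\norm{B_k}$ depending on which leg one treats as the "scalar-amplified" coefficient family.

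The crucial cancellation — and this is the main obstacle — is that the three summands inside the parentheses are designed precisely so that, \emph{grouping $D_1$ against $D_2$ and $D_3$}, the contributions of order $m^{(l+1)/2}$ annihilate each other at the level of the Fock-space vectors $D(m)\Omega$, leaving only a remainder. Concretely, $W^\omega(\xi_{t_1}\otimes\cdots)$ applied to the vacuum is (a representative of) the ultraproduct of $m^{-(l+1)/2}\sum W(e_{k_1})\cdots \otimes (\xi_{t_1}\otimes e_{k_1})\otimes\cdots$, and the Wick calculus (Lemma \ref{wick pdt formula}, Eq.\ \eqref{left annihilation}) expresses $W^\omega(\xi_{t_1})W^\omega(\xi_{t_2}\otimes\cdots\otimes\xi_{t_{l+1}})$ minus $W^\omega(\xi_{t_1}\otimes\cdots\otimes\xi_{t_{l+1}})$ as exactly the "lower-order" terms packaged in $D_1+D_2+D_3$; so establishing that $D(m)$ vanishes in the ultraproduct is the inductive step that lets Theorem \ref{image} go through. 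Thus I would first verify the algebraic identity that $D(m)$ equals $m^{-(l+1)/2}$ times the discrepancy between $u_m(\xi_{t_1})\cdot W_{m}(\xi_{t_2}\otimes\cdots)$ and $W_m(\xi_{t_1}\otimes\cdots)$ modulo terms already known to be negligible (diagonal collapses other than $k_1=k_i$ contribute a further lower power of $m$ and are handled by the same estimates with a worse — i.e.\ better — exponent), and then I would bound each surviving family of terms. For the families where the natural coefficient is the $\Gamma_{\tilde Q}(\R^m)^{\prime\prime}$-leg, one uses $\norm{A_k}\le D(l)$ from Lemma \ref{Bound}(1); for those where the coefficient is the Gaussian-leg product, one uses that each $\norm{W(e_{k_j})}$ is bounded independently of $m$. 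Either way one arrives at $\norm{D(m)}\le C(l)\,m^{-(l+1)/2}\cdot m^{l/2}=C(l)\,m^{-1/2}\to 0$, which gives $\lim_{m\to\omega}\norm{D(m)}=0$ and completes the proof.
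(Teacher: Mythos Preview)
Your last paragraph contains the correct argument and matches the paper: each of the three families, summed over $k\in\Lambda_i$, has norm at most $C(l)\,m^{l/2}$ by the preceding Khintchine-type inequalities \eqref{eq2}--\eqref{eq3} together with the uniform bounds of Lemma~\ref{Bound}, so the normalization $m^{-(l+1)/2}$ yields $m^{-1/2}\to 0$. But your third paragraph is a genuine wrong turn and should be discarded. There is \emph{no} cancellation among $D_1$, $D_2$, $D_3$; the paper bounds each of them separately and each is already $O(m^{l/2})$ on its own. The source of the confusion is the exponent in your second paragraph: you write that the Khintchine bound gives $C(l)\,m^{(l+1)/2}$, which would indeed be too weak and would force one to look for cancellations. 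In fact the constraint $k_1=k_i$ built into $\Lambda_i$ removes one degree of freedom, so $|\Lambda_i|\le m^{l}$ (not $m^{l+1}$), and the Hilbert--Schmidt step inside \eqref{eq1}--\eqref{eq2} then produces $m^{l/2}$, exactly as you correctly use in your final line. No ``annihilation at the level of Fock-space vectors'' enters anywhere.

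One smaller correction concerning $D_3$: its first tensor leg is a \emph{product} of one-letter Wick words, not a single Wick word of length $l+1$, so inequality \eqref{eq2} cannot be applied to that leg directly as you suggest. The paper instead applies the $q$-Araki--Woods Khintchine bound \eqref{eq3} to the \emph{second} leg, which \emph{is} a single Wick word $W((\xi_{t_1}\otimes e_{k_1})\otimes\cdots\otimes(\xi_{t_{l+1}}\otimes e_{k_{l+1}}))$, and bounds the product leg uniformly via Lemma~\ref{Bound}(3). With the cancellation detour removed and the roles of the two legs swapped for $D_3$, your plan is exactly the paper's proof.
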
 
\begin{proof}
First, recall that 
\begin{align*}
D(m)=&\displaystyle\sum_{i=2}^{l+1}m^{-\frac{l+1}{2}}\displaystyle\sum_{k_1=1}^{m}\sum_{\substack{k:[l+1]\setminus\{1\}\mapsto[m]\\ injective}}\Bigg( D_{1,i,k}(m)+\inner{\J\xi_{t_1}}{\xi_{t_i}}_Uq^{i-1}D_{2,i,k}(m) \\ &\qquad\qquad\qquad\qquad\qquad\qquad\qquad +\left(\prod_{2\leqslant j\leqslant i-1}\tilde{q}_{k_i,k_j}\right)D_{3,i,k}(m)\Bigg).
\end{align*}
Now since, $l\in\N$ and $q$, $\tilde{q}_{k_i,k_j}$'s are fixed in the expression of $D(m)$ and $D(m)$ is the linear combination of $D_{1,i, k}(m), D_{2,i, k}(m),$ and $D_{3,i, k}(m) $, thus,  to show that the sequence $(D(m))_m$ represents the $0$-element in the ultraproduct $\mathcal{N}$, it is enough to show  
\begin{align*}
\lim_{m\rightarrow\infty}m^{\frac{-(l+1)}{2}}\norm{D_{j,i,k}}=0\ \ \text{for all} \ 1\leqslant j\leqslant 3.    
\end{align*}
First notice  that, $D_{1,i, k}(m)$ is of the form $$\sum_{\Lambda_i} W((e_{k_1})\otimes\cdots\otimes ( e_{k_{l+1}}))\otimes A_{k} ,$$ where $A_{k}$ stands for $W((\xi_{t_1}\otimes e_{k_1})\otimes\cdots\otimes (\xi_{t_{l+1}}\otimes e_{k_{l+1}}))$. Then, from the inequality (\ref{eq1}) of  the last lemma, along with the commutativity of the tensor product, we get
\begin{align*}
    \lim_{m\rightarrow\infty}m^{\frac{-(l+1)}{2}}\norm{D_{1,i,k}} & =\lim_{m\rightarrow\infty}m^{\frac{-(l+1)}{2}}\norm{\sum_{\Lambda_i}A_{k}\otimes  W((e_{k_1})\otimes\cdots\otimes ( e_{k_{l+1}}))}\\
    & \leqslant\lim_{m\rightarrow\infty}m^{\frac{-1}{2}}C(l)\sup_{k\in\Lambda_i}\norm{A_{k}}\\
    & \leqslant\lim_{m\rightarrow\infty}m^{\frac{-1}{2}}C(l)D(l)\qquad \text{(from Lemma \ref{Bound}(1))}\\
    & = 0. 
\end{align*}
Similarly, by using the inequality (\ref{eq1}), Lemma \ref{Bound}(2), and a careful application of commutativity of tensors, we obtain 
\begin{align*}
    \lim_{m\rightarrow\infty}m^{\frac{-(l+1)}{2}}\norm{D_{2,i,k}}& = \lim_{m\rightarrow\infty}m^{\frac{-(l+1)}{2}}\norm{\sum_{\Lambda_i}A_{k}\otimes W(e_{k_1} \otimes\cdots\otimes e_{k_{l+1}})}\\
    & \leqslant \lim_{m\rightarrow\infty}m^{\frac{-1}{2}}C(l)\sup_{k\in\Lambda_{i}}\norm{A_{k}}\\
    & \leqslant \lim_{m\rightarrow\infty}m^{\frac{-1}{2}}C(l)D(l)\\
    & =0,
\end{align*}
where $A_{k}$ stands for $W(\xi_{t_1}\otimes e_{k_1})\cdots\widehat{W(\xi_{t_i}\otimes e_{k_i})}\cdots W(\xi_{t_{l+1}}\otimes e_{k_{l+1}}).$

Also, for $A_{k}= W(e_{k_1})\cdots \widehat{W(e_{k_i})}\cdots W(e_{k_{l+1}})$ in the inequality (\ref{eq3}) along with Lemma \ref{Bound}(3) implies
\begin{align*}
&\lim_{m\rightarrow\infty}m^{\frac{-(l+1)}{2}}\norm{D_{3,i,k}}\\
&\qquad\qquad =\lim_{m\rightarrow\infty}m^{\frac{-(l+1)}{2}}\norm{\sum_{\Lambda_i}A_{k}\otimes W((\xi_{t_1}\otimes e_{k_1})\otimes\cdots\otimes (\xi_{t_{l+1}}\otimes e_{k_{l+1}})}\\
& \qquad\qquad \leqslant \lim_{m\rightarrow\infty}m^{\frac{-1}{2}}C(l)\sup_{k\in\Lambda_{i}}\norm{A_{k}}\\
& \qquad\qquad \leqslant \lim_{m\rightarrow\infty}m^{\frac{-1}{2}}C(l)D(l)\\
& \qquad\qquad = 0.    
\end{align*}
This suggests that $(D(m))_{\omega} = 0$.\\

\end{proof}

\begin{proof}\emph{of Theorem \ref{image}}:
We will prove the theorem by induction on $l$. For $l=1$, it is clear from the definition of $\Theta$ in Theorem \ref{Embedding}. Suppose it is true for all $1\leqslant l'\leqslant l$. We will prove for the case of $l+1$. Let $\xi_{t_1},\ldots,\xi_{t_{l+1}}\in\h_{\C}$. Then, it can be easily checked that 
\begin{align*}
W(\xi_{t_1}\otimes\cdots\otimes\xi_{t_{l+1}})= & W(\xi_{t_1})W(\xi_{t_2}\otimes \cdots\otimes\xi_{t_{l+1}})\\ &-\displaystyle\sum_{i=2}^{l+1}\inner{\J\xi_{t_1}}{\xi_{t_i}}_U\left(\displaystyle\prod_{2\leqslant j\leqslant i-1}q_{t_i,t_j}\right)W(\xi_{t_2}\otimes\cdots\otimes\widehat{\xi}_{t_i}\cdots\otimes\xi_{t_{l+1}}),
\end{align*}
By applying $\Theta$ on both sides and using the induction hypothesis, we get, 
\begin{align}\label{image decomp}
&\Theta(W(\xi_{t_1}\otimes\cdots\otimes\xi_{t_{l+1}}))  \\ 
&\qquad =W^{\omega}(\xi_{t_1})W^{\omega}(\xi_{t_2}\otimes \cdots\otimes\xi_{t_{l+1}}) \nonumber \\ 
&\qquad\qquad\qquad\qquad -\displaystyle\sum_{i=2}^{l+1}\inner{\J\xi_{t_1}}{\xi_{t_i}}_U\left(\displaystyle\prod_{2\leqslant j\leqslant i-1}q_{t_i,t_j}\right)W^{\omega}(\xi_{t_2}\otimes\cdots\otimes\widehat{\xi}_{t_i}\cdots\otimes\xi_{t_{l+1}}). \nonumber
\end{align}
Let us consider the first term on the right-hand side. 
{\scriptsize
\begin{align*}
&W^{\omega}(\xi_{t_1})W^{\omega}(\xi_{t_2}\otimes \cdots\otimes\xi_{t_{l+1}})\\
&=\left(m^{-\frac{1}{2}}\displaystyle\sum_{k_1=1}^m W(e_{k_1})\otimes W(\xi_{t_1}\otimes e_{k_1})\right)_{\omega}\times \\
&\qquad\qquad \left(m^{-\frac{l}{2}}\displaystyle\sum_{\substack{k:[l+1]\setminus\{1\}\mapsto[m]\\ injective }} W(e_{k_2})\cdots W(e_{k_{l+1}})\otimes W(\xi_{t_2}\otimes e_{k_2})\cdots W(\xi_{t_{l+1}}\otimes e_{k_{l+1}})\right)_{\omega}\\
&=\left(m^{-\frac{l+1}{2}}\displaystyle\sum_{k_1=1}^{m}\sum_{\substack{k:[l+1]\setminus\{1\}\mapsto[m]\\ injective}} W(e_{k_1})\cdots W(e_{k_{l+1}})\otimes W(\xi_{t_1}\otimes e_{k_1})\cdots W(\xi_{t_{l+1}}\otimes e_{k_{l+1}})\right)_{\omega}\\
&=\left(m^{-\frac{l+1}{2}}\displaystyle\sum_{\substack{k:[l+1]\mapsto[m]\\ injective}} W(e_{k_1})\cdots W(e_{k_{l+1}})\otimes W(\xi_{t_1}\otimes e_{k_1})\cdots W(\xi_{t_{l+1}}\otimes e_{k_{l+1}})\right)_{\omega}\\
&+\left(m^{-\frac{l+1}{2}}\displaystyle\sum_{k_1=1}^{m}\sum_{i=2}^{l+1}\sum_{\substack{k:[l+1]\setminus\{1\}\mapsto[m]\\ injective\\ k_1=k_i}} W(e_{k_1})\cdots W(e_{k_{l+1}})\otimes W(\xi_{t_1}\otimes e_{k_1})\cdots W(\xi_{t_{l+1}}\otimes e_{k_{l+1}})\right)_{\omega}
\end{align*} 
}

Recall that the first term of the sum in the last equality is nothing but $W^{\omega}(\xi_{t_1}\otimes\cdots\otimes\xi_{t_{l+1}}).$ So, we have
{\scriptsize
\begin{align*}
&W^{\omega}(\xi_{t_1})W^{\omega}(\xi_{t_2}\otimes \cdots\otimes\xi_{t_{l+1}})\\
&=W^{\omega}(\xi_{t_1}\otimes\cdots\otimes\xi_{t_{l+1}})\\
&+\displaystyle\sum_{i=2}^{l+1}\left(m^{-\frac{l+1}{2}}\displaystyle\sum_{k_1=1}^{m}\sum_{\substack{k:[l+1]\setminus\{1\}\mapsto[m]\\ injective\\ k_1=k_i}}W(e_{k_1})\cdots W(e_{k_{l+1}})\otimes W(\xi_{t_1}\otimes e_{k_1})\cdots W(\xi_{t_{l+1}}\otimes e_{k_{l+1}})\right)_{\omega}.
\end{align*}
}
Since the first term $W^{\omega}(\xi_{t_1}\otimes\cdots\otimes\xi_{t_{l+1}})$ is what we need to prove our result, we have to find a way to get rid of the second term in the sum of the above equation. For $k_1=k_i$ and $k_2\neq\cdots\neq k_l$,
\begin{align*}
W(e_{k_1})\cdots W(e_{k_{l+1}})&=W(e_{k_1}\otimes\cdots\otimes e_{k_{l+1}})\\ &  +\left(\displaystyle\prod_{1\leqslant j\leqslant i-1}\tilde{q}_{k_i,k_j}\right)W(e_{k_1})\cdots \widehat{W(e_{k_i})}\cdots W(e_{k_{l+1}})
\end{align*}
and
\begin{align*}
&W(\xi_{t_1}\otimes e_{k_1})\cdots W(\xi_{t_{l+1}}\otimes e_{k_{l+1}})\\ &=W((\xi_{t_1}\otimes e_{k_1})\otimes\cdots\otimes (\xi_{t_{l+1}}\otimes e_{k_{l+1}}))\\
& \qquad\qquad +\inner{\J\xi_{t_1}}{\xi_{t_i}}_Uq^{i-1}W(\xi_{t_1}\otimes e_{k_1})\cdots \widehat{W(\xi_{t_i}\otimes e_{k_i})}\cdots W(\xi_{t_{l+1}}\otimes e_{k_{l+1}}).
\end{align*}
So, if we take the tensor product of the two terms $W(e_{k_1})\cdots W(e_{k_{l+1}})$ and $ W(\xi_{t_1}\otimes e_{k_1})\cdots W(\xi_{t_{l+1}}\otimes e_{k_{l+1}})$, there will be four terms involving $D_{1,i,k}$, $ D_{2,i,k}$, $ D_{3,i,k}$, and $D_{4,i,k}$. Observe that 
\begin{align*}
&W(e_{k_1})\cdots W(e_{k_{l+1}})\otimes W(\xi_{t_1}\otimes e_{k_1})\cdots W(\xi_{t_{l+1}}\otimes e_{k_{l+1}})\\
&=D_{1,i,k}(m)+\inner{\J\xi_{t_1}}{\xi_{t_i}}_Uq^{i-1}D_{2,i,k}(m)\\
&  +\left(\displaystyle\prod_{1\leqslant j\leqslant i-1}\tilde{q}_{k_i,k_j}\right)D_{3,i,k}(m)+ \left(\displaystyle\prod_{1\leqslant j\leqslant i-1}\tilde{q}_{k_i,k_j} \right)q^{i-1} \inner{\J\xi_{t_1}}{\xi_{t_i}}_UD_{4,i,k}(m)\\
&=D_{1,i,k}(m)+\inner{\J\xi_{t_1}}{\xi_{t_i}}_Uq^{i-1}D_{2,i,k}(m)\\
&  \qquad  +\left(\displaystyle\prod_{1\leqslant j\leqslant i-1}\tilde{q}_{k_i,k_j}\right)D_{3,i,k}(m)+ \left(\displaystyle\prod_{1\leqslant j\leqslant i-1}q_{k_i,k_j} \right)\inner{\J\xi_{t_1}}{\xi_{t_i}}_UD_{4,i,k}(m).
\end{align*}
Here the coefficients $\prod_{1\leqslant j\leqslant i-1}q_{k_i,k_j}$ of $D_{4,i,k}(m)$ is because of the fact that $q_{i,j}=q\tilde{q}_{i,j}$. Notice that, 
\begin{align*}
\displaystyle\sum_{i=2}^{l+1}m^{-\frac{l+1}{2}}&\displaystyle\sum_{k_1=1}^{m}\sum_{\substack{k:[l+1]\setminus\{i\}\mapsto[m]\\ injective\\ k_1=k_i}} \left(\displaystyle\prod_{2\leqslant j\leqslant i-1}q_{k_i,k_j}\right) \inner{\J\xi_{t_1}}{\xi_{t_i}}_UD_{4,i,k}(m)\\ 
&= \displaystyle\sum_{i=2}^{l+1}m^{-\frac{l-1}{2}}\displaystyle\sum_{\substack{k:[l+1]\setminus\{1,i\}\mapsto[m]\\ injective}} \left(\displaystyle\prod_{2\leqslant j\leqslant i-1}q_{k_i,k_j}\right) \inner{\J\xi_{t_1}}{\xi_{t_i}}_U D_{4,i,k}(m).
\end{align*}
This is because terms involving $k_i$ and $k_1$ are absent in the summation. So, we can remove the condition $k_i=k_1$ and take the sum over $k_1$. Now, the second summation is over $l-1$ indices. Using the definition of $D_{4,i,k}(m)$, we can see that the sequence $\left(m^{-\frac{l-1}{2}}\sum_{\substack{k:[l+1]\setminus\set{1,i}\mapsto[m]\\ injective}}D_{4,i,k}(m)\right)_m$ is a representative sequence of the following element of the ultraproduct $\mathcal{N}$:
\begin{align*}
\displaystyle\sum_{i=2}^{l+1}\left(\displaystyle\prod_{2\leqslant j\leqslant i-1}q_{k_i,k_j}\right) \inner{\J\xi_{t_1}}{\xi_{t_i}}_UW^{\omega}(\xi_{t_1}\otimes\cdots\otimes\widehat{ \xi}_{t_i} \otimes\cdots\otimes \xi_{t_{l+1}}).
\end{align*}
Therefore, we can write, 
\begin{align}\label{wick prod decomp}
& W^{\omega}(\xi_{t_1})W^{\omega}(\xi_{t_2}\otimes \cdots\otimes\xi_{t_{l+1}})\\
& \nonumber =W^{\omega}(\xi_{t_1}\otimes\cdots\otimes\xi_{t_{l+1}})\\
& \nonumber +\displaystyle\sum_{i=2}^{l+1}\left(\displaystyle\prod_{2\leqslant j\leqslant i-1}q_{k_i,k_j}\right) \inner{\J\xi_{t_1}}{\xi_{t_i}}_UW^{\omega}(\xi_{t_1}\otimes\cdots\otimes\widehat{ \xi}_{t_i} \otimes\cdots\otimes \xi_{t_{l+1}})+(D(m))_{\omega},
\end{align}

where
\begin{align*}
D(m)=&\displaystyle\sum_{i=2}^{l+1}m^{-\frac{l+1}{2}}\displaystyle\sum_{k_1=1}^{m}\sum_{\substack{k:[l+1]\setminus\{1\}\mapsto[m]\\ injective\\ k_1=k_i}}\Bigg( D_{1,i,k}(m)+\inner{\J\xi_{t_1}}{\xi_{t_i}}_Uq^{i-1}D_{2,i,k}(m) \\
&\qquad\qquad \qquad \qquad \qquad \qquad \qquad \qquad  +\left(\prod_{2\leqslant j\leqslant i-1}\tilde{q}_{k_i,k_j}\right)D_{3,i,k}(m)\Bigg).
\end{align*}
It follows from Lemma \ref{D} that the $(D(m))_{\omega}$ is $`0$' in $\mathcal{N}$. Hence, from Eq.\ref{wick prod decomp} and Eq.\ref{image decomp} it follows that  
\begin{align*}
\Theta(W(\xi_{t_1}\otimes\cdots\otimes\xi_{t_{l+1}}))=W^{\omega}(\xi_{t_1}\otimes\cdots\otimes\xi_{t_{l+1}}),
\end{align*}
which proves the theorem.
\end{proof}

We require the following intertwining result for $F_n$ using the the isomorphism $\Theta$ from Theorem \ref{Embedding}, which will facilitate the shifting of cb norm of the radial multipliers from the mixed $q$-Gaussian von Neumann factor to the mixed $q$-deformed Araki-Woods factor.

\begin{thm}\label{intertwin}
Let $F_n: \Gamma_T(\h_{\R},U_t)^{\prime\prime} \mapsto \Gamma_T(\h_{\R},U_t)^{\prime\prime}$ be the projection onto the ultraweakly closed span of $\set{W(\xi) : \xi\in\h^{\otimes n}}$. Then, we have
\begin{equation}
\Theta\circ F_n =p(F_n \otimes Id)_{\omega}p\circ\Theta.
\end{equation}
\end{thm}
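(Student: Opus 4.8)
The plan is to verify the claimed intertwining relation on the weak\textsuperscript{*}-dense $*$-subalgebra $\widetilde{\Gamma}_T(\h_{\R},U_t)$, spanned by Wick words $W(\xi)$ with $\xi\in\h_{\C}^{\otimes l}$, and then invoke normality of all maps involved to conclude. Fix $l\in\N$ and $\xi=\xi_{t_1}\otimes\cdots\otimes\xi_{t_l}\in\h_{\C}^{\otimes l}$. On the left-hand side, $F_n(W(\xi))=\delta_{n,l}W(\xi)$, so $\Theta\circ F_n(W(\xi))=\delta_{n,l}\,\Theta(W(\xi))=\delta_{n,l}\,W^{\omega}(\xi)$ by Theorem \ref{image}. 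On the right-hand side, $\Theta(W(\xi))=W^{\omega}(\xi)$, so I must compute $p(F_n\otimes Id)_{\omega}p\bigl(W^{\omega}(\xi)\bigr)$ and show it equals $\delta_{n,l}\,W^{\omega}(\xi)$.

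The key computation is to identify $(F_n\otimes Id)_{\omega}$ applied to a representative sequence of $W^{\omega}(\xi)$. Recall
\[
W^{\omega}(\xi)=\Bigl(m^{-l/2}\!\!\sum_{\substack{k:[l]\to[m]\\ \text{injective}}}\!\! W(e_{k_1})\cdots W(e_{k_l})\otimes W(\xi_{t_1}\otimes e_{k_1})\cdots W(\xi_{t_l}\otimes e_{k_l})\Bigr)_{\omega}.
\]
Here $F_n$ acts on the first leg, the mixed $q$-Gaussian factor $\Gamma_{\tilde Q}(\R^m)^{\prime\prime}$. The point is that, modulo terms that vanish in the ultraproduct, the diagonal-index summands $W(e_{k_1})\cdots W(e_{k_l})$ with $k$ injective already lie in the length-$l$ component $\Gamma^l_{\tilde Q}(\R^m)$: indeed, from the Wick formula (Lemma \ref{wick pdt formula}), $W(e_{k_1})\cdots W(e_{k_l})=W(e_{k_1}\otimes\cdots\otimes e_{k_l})+(\text{lower-length corrections involving contractions }\inner{e_{k_r}}{e_{k_s}})$, and since $k$ is injective all such contractions vanish. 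Hence each summand is literally $W(e_{k_1}\otimes\cdots\otimes e_{k_l})$, which $F_n$ sends to $\delta_{n,l}\,W(e_{k_1}\otimes\cdots\otimes e_{k_l})$. Therefore $(F_n\otimes Id)_{\omega}$ fixes the representative sequence when $n=l$ and annihilates it otherwise, giving $(F_n\otimes Id)_{\omega}\bigl(\Theta(W(\xi))\bigr)=\delta_{n,l}\,\Theta(W(\xi))$. Since $\Theta(W(\xi))=pW^{\omega}(\xi)p$ lies in $p\tilde B p$ and the compression $x\mapsto pxp$ is just the conditional expectation fixing $p\tilde Bp$, we get $p(F_n\otimes Id)_{\omega}p\bigl(\Theta(W(\xi))\bigr)=\delta_{n,l}\,\Theta(W(\xi))=\Theta\circ F_n(W(\xi))$, which is the desired identity on generators.

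Finally I would upgrade from the $*$-subalgebra $\widetilde{\Gamma}_T(\h_{\R},U_t)$ to all of $\Gamma_T(\h_{\R},U_t)^{\prime\prime}$: $F_n$ is normal, $\Theta$ is a normal $*$-isomorphism (Theorem \ref{Embedding}), and $(F_n\otimes Id)_{\omega}$ together with the compression by $p$ is normal as well (it is the composition of the ultraproduct of normal maps with a normal conditional expectation), so both sides of the asserted equation are normal maps agreeing on a weak\textsuperscript{*}-dense subalgebra, hence they agree everywhere.

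The main obstacle I anticipate is the careful bookkeeping in the computation of $(F_n\otimes Id)_{\omega}$ on the representative sequence: one must check that applying $F_n$ entry-by-entry to the sequence $\bigl(m^{-l/2}\sum_{k\ \text{inj}} W(e_{k_1})\cdots W(e_{k_l})\otimes(\cdots)\bigr)_m$ is compatible with passing to the ultraproduct (it is, since $F_n$ is a fixed bounded map applied coordinatewise, and $(F_n\otimes Id)_{\omega}$ is \emph{defined} this way on $\mathcal N$), and above all that the injectivity of the index $k$ really does force each $W(e_{k_1})\cdots W(e_{k_l})$ to be a pure length-$l$ Wick word with no lower-order terms — this is where one uses that the $e_{k_j}$ are distinct orthonormal vectors so that every annihilation contraction $\inner{e_{k_r}}{e_{k_s}}_U$ in the Wick expansion vanishes. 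Care is also needed in handling the first $l$ padding entries (set to $0$) of the representative sequence, but these are finitely many and irrelevant to the ultrafilter limit.
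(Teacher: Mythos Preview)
Your proposal is correct and follows essentially the same route as the paper: both compute each side on a Wick word $W(\xi)$ with $\xi\in\h_{\C}^{\otimes l}$, use Theorem~\ref{image} to identify $\Theta(W(\xi))=W^{\omega}(\xi)$, observe that injectivity of $k$ forces $W(e_{k_1})\cdots W(e_{k_l})=W(e_{k_1}\otimes\cdots\otimes e_{k_l})$ so that $F_n$ acts as $\delta_{n,l}$ on each summand of the representative sequence, and then extend by normality from $\widetilde{\Gamma}_T(\h_{\R},U_t)$ to $\Gamma_T(\h_{\R},U_t)^{\prime\prime}$. Your discussion of the ``main obstacle'' is exactly the paper's key step, handled in the same way.
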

\begin{proof}
For $\xi\in\h_{\C}^{\otimes l}$, from Theorem \ref{image}, we have $\Theta(F_n(W(\xi)))=\Theta(\delta_{n,l}W(\xi))=\delta_{n,l}W^{\omega}(\xi).$ Also, 
\begin{align*}
&(F_n\otimes Id )_{\omega}(\Theta(W(\xi)))\\
&=(F_n\otimes Id )_{\omega}(W^{\omega}(\xi))\\
&=(F_n\otimes Id)_{\omega}\left(\left(m^{\frac{-l}{2}}\displaystyle\sum_{\substack{k:[l]\mapsto [m]\\ injective}}^m W(e_{k_1})\cdots W(e_{k_l})\otimes W(\xi_{t_1}\otimes e_{k_1})\cdots W(\xi_{t_l}\otimes e_{k_l})\right)_{\omega} \right)\\
&=\left((F_n\otimes Id)\left(m^{\frac{-l}{2}}\displaystyle\sum_{\substack{k:[l]\mapsto [m]\\ injective}}^m W(e_{k_1})\cdots W(e_{k_l})\otimes W(\xi_{t_1}\otimes e_{k_1})\cdots W(\xi_{t_l}\otimes e_{k_l})\right)\right)_{\omega}\\
&=\left(m^{\frac{-l}{2}}\displaystyle\sum_{\substack{k:[l]\mapsto [m]\\ injective}}^m F_n\left(W(e_{k_1})\cdots W(e_{k_l})\right)\otimes W(\xi_{t_1}\otimes e_{k_1})\cdots W(\xi_{t_l}\otimes e_{k_l})\right)_{\omega}\\
&=\left(m^{\frac{-l}{2}}\displaystyle\sum_{\substack{k:[l]\mapsto [m]\\ injective}}^m F_n\left(W(e_{k_1}\otimes\cdots\otimes e_{k_l})\right)\otimes W(\xi_{t_1}\otimes e_{k_1})\cdots W(\xi_{t_l}\otimes e_{k_l})\right)_{\omega}\\
&=\left(m^{\frac{-l}{2}}\displaystyle\sum_{\substack{k:[l]\mapsto [m]\\ injective}}^m \delta_{n,l}\left(W(e_{k_1}\otimes\cdots\otimes e_{k_l})\right)\otimes W(\xi_{t_1}\otimes e_{k_1})\cdots W(\xi_{t_l}\otimes e_{k_l})\right)_{\omega}\\
&=\delta_{n,l}W^{\omega}(\xi).
\end{align*}
By linearity, this implies that $\Theta\circ F_n =(F_n\otimes Id )_{\omega}\circ\Theta$ on the algebra of Wick words $\widetilde{\Gamma}_T(\h_{\R},U_t)$. Further, cutting down these maps by the support projection $p$, we have  
\begin{align*}
\Theta\circ F_n =p(F_n \otimes Id)_{\omega}p\circ\Theta \qquad \text{on}\qquad \widetilde{\Gamma}_T(\h_{\R},U_t).
\end{align*}
Since, each of these maps are normal and $\widetilde{\Gamma}_T(\h_{\R},U_t)$ ultraweakly dense in $\Gamma_T(\h_{\R},U_t)^{\prime\prime}$, the above equality holds on $\Gamma_T(\h_{\R},U_t)^{\prime\prime}$ as well.
\end{proof}

\begin{thm}
 Let $\phi : \N \mapsto \C$ be a function such that the associated radial multipliers $m_{\phi} : \Gamma_{\tilde{Q}}(\R^m)^{\prime\prime}\mapsto \Gamma_{\tilde{Q}}(\R^m)^{\prime\prime}$ have completely bounded norms uniformly bounded in $m$. Then, the radial multiplier defined by $\phi$ on any mixed $q$-deformed Araki–Woods factor $\Gamma_{T}(\h_{\R},U_t)^{\prime\prime}$ is completely bounded and
\begin{align*}
\norm{m_{\phi} : \Gamma_{T}(\h_{\R},U_t)^{\prime\prime} \mapsto \Gamma_{T}(\h_{\R},U_t)^{\prime\prime}}_{cb} \leqslant \sup_{m\in\N} \norm{m_{\phi} : \Gamma_{\tilde{Q}}(\R^m)^{\prime\prime} \mapsto \Gamma_{\tilde{Q}}(\R^m)^{\prime\prime}}_{cb} 
\end{align*}
\end{thm}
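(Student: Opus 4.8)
The plan is to transfer the estimate from the $q$-Gaussian factors $\Gamma_{\tilde{Q}}(\R^m)^{\prime\prime}$ to $\Gamma_T(\h_\R,U_t)^{\prime\prime}$ through the embedding $\Theta$ of Theorem \ref{Embedding}, exactly as the intertwining relation $\Theta\circ F_n=p(F_n\otimes Id)_\omega p\circ\Theta$ of Theorem \ref{intertwin} does for the projections $F_n$. Put $c:=\sup_{m\in\N}\norm{m_{\phi}:\Gamma_{\tilde{Q}}(\R^m)^{\prime\prime}\to\Gamma_{\tilde{Q}}(\R^m)^{\prime\prime}}_{cb}<\infty$. By hypothesis each $m_{\phi}$ is a completely bounded radial multiplier on $\Gamma_{\tilde{Q}}(\R^m)^{\prime\prime}$, and being such it is automatically normal; hence $m_{\phi}\otimes Id$ is a normal cb map on $\Gamma_{\tilde{Q}}(\R^m)^{\prime\prime}\overline{\otimes}\Gamma_q(\h_\R\otimes\R^m,U_t\otimes I_m)^{\prime\prime}$ with $\norm{m_{\phi}\otimes Id}_{cb}=\norm{m_{\phi}:\Gamma_{\tilde{Q}}(\R^m)^{\prime\prime}\to\Gamma_{\tilde{Q}}(\R^m)^{\prime\prime}}_{cb}\leqslant c$. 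Running the same ultraproduct construction that produced $(F_n\otimes Id)_\omega$ in the proof of Theorem \ref{intertwin}, the uniformly cb-bounded family $(m_{\phi}\otimes Id)_m$ yields a normal cb map $(m_{\phi}\otimes Id)_\omega:\mathcal{N}\to\mathcal{N}$ with $\norm{(m_{\phi}\otimes Id)_\omega}_{cb}\leqslant c$.

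Next I define $\Phi:=\Theta^{-1}\circ E\circ(m_{\phi}\otimes Id)_\omega\circ\Theta$ as a map from $\Gamma_T(\h_\R,U_t)^{\prime\prime}$ to itself, where $E:\mathcal{N}\to p\tilde{B}p$ is the normal state-preserving conditional expectation constructed above. Since $\Theta$ is a complete isometry, $E$ is completely contractive, and all four maps are normal, $\Phi$ is normal with $\norm{\Phi}_{cb}\leqslant c$. It remains to identify $\Phi$ with $m_{\phi}$ on the w*-dense Wick algebra $\widetilde{\Gamma}_T(\h_\R,U_t)$. Fix $\xi=\xi_{t_1}\otimes\cdots\otimes\xi_{t_l}\in\h_\C^{\otimes l}$; by Theorem \ref{image}, $\Theta(W(\xi))=W^\omega(\xi)$. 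Applying $m_{\phi}\otimes Id$ entrywise to the defining representative sequence of $W^\omega(\xi)$ and using that for an injective $k:[l]\to[m]$ the vectors $e_{k_1},\dots,e_{k_l}$ are pairwise orthogonal, so that $W(e_{k_1})\cdots W(e_{k_l})=W(e_{k_1}\otimes\cdots\otimes e_{k_l})$, on which $m_{\phi}$ acts by the scalar $\phi(l)$, one gets $(m_{\phi}\otimes Id)_\omega(W^\omega(\xi))=\phi(l)\,W^\omega(\xi)$. Since $W^\omega(\xi)=\Theta(W(\xi))\in p\tilde{B}p$, the expectation $E$ leaves it fixed, and $\Theta^{-1}(\phi(l)W^\omega(\xi))=\phi(l)W(\xi)=m_{\phi}(W(\xi))$. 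By linearity $\Phi$ and $m_{\phi}$ coincide on $\widetilde{\Gamma}_T(\h_\R,U_t)$; being normal and agreeing on a w*-dense $*$-subalgebra, $\Phi$ is the required (w*-continuous) cb extension of $m_{\phi}$, whence $\norm{m_{\phi}:\Gamma_T(\h_\R,U_t)^{\prime\prime}\to\Gamma_T(\h_\R,U_t)^{\prime\prime}}_{cb}\leqslant c$.

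The one point requiring care is that the ultraproduct $(m_{\phi}\otimes Id)_\omega$ genuinely defines a normal cb map on the Raynaud ultraproduct $\mathcal{N}$ with cb norm at most $\sup_m$; this is established exactly as for the maps $F_n\otimes Id$ in Theorem \ref{intertwin}, using that ultraproducts of uniformly cb-bounded normal maps on the amplifications pass to the ultraproduct, together with the standard fact that a cb radial multiplier on a $q$-Gaussian factor is automatically normal. Everything else is bookkeeping with $\Theta$ and $E$. Combining this bound with the dimension-free estimates of \cite{JXu07} for the projections $F_n$ on $\Gamma_{\tilde{Q}}(\R^m)^{\prime\prime}$ then yields uniform cb bounds for $F_n$ on $\Gamma_T(\h_\R,U_t)^{\prime\prime}$, which is precisely the input needed in Section 4.
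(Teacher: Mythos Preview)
Your overall strategy is the same as the paper's: intertwine $m_\phi$ with $(m_\phi\otimes Id)_\omega$ through $\Theta$ and read off the cb bound. The one genuine gap is your assertion that $(m_\phi\otimes Id)_\omega$ is a \emph{normal} cb map on the Raynaud ultraproduct $\mathcal{N}$. An ultraproduct of uniformly cb-bounded maps yields a cb map on the Banach-space (C*-) ultraproduct $(M_m)_\omega$, but there is no general principle extending it to a normal map on the weak closure $\mathcal{N}$; your appeal to Theorem \ref{intertwin} does not help, since that theorem does not supply such an argument either (its closing line ``each of these maps are normal'' is itself not justified for the ultraproduct map). Consequently your $\Phi$ is, as written, only defined on $\widetilde{\Gamma}_T(\h_\R,U_t)$, and the density argument in the last sentence requires normality you have not established.

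The paper avoids this problem entirely. It only uses the intertwining relation on $\widetilde{\Gamma}_T(\h_\R,U_t)$ to conclude that $m_\phi=\Theta^{-1}\circ p(m_\phi\otimes Id)_\omega p\circ\Theta$ is cb there, hence on the norm closure $\Gamma_T(\h_\R,U_t)$ (the C*-algebra). It then invokes \cite[Lemma 3.4]{HR11}, which says precisely that a cb radial multiplier on $\Gamma_T(\h_\R,U_t)$ extends to a normal cb map on $\Gamma_T(\h_\R,U_t)^{\prime\prime}$ with the same cb norm. That external lemma is the missing ingredient in your argument; once you insert it in place of the unjustified normality claim for $(m_\phi\otimes Id)_\omega$, your proof becomes essentially identical to the paper's. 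Your use of the conditional expectation $E$ rather than the compression $p(\cdot)p$ is harmless, since on the relevant elements $W^\omega(\xi)\in p\tilde{B}p$ both act as the identity.
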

\begin{proof}
From Theorem \ref{intertwin}, we have $\Theta\circ F_n(x) =p(F_n \otimes Id)_{\omega}p\circ\Theta(x)$ for all $x\in\widetilde{\Gamma}_T(\h_{\R},U_t)$. This implies, $\Theta\circ m_{\phi}(x) =p(m_{\phi} \otimes Id)_{\omega}p\circ\Theta(x)$ for all $x\in\widetilde{\Gamma}_T(\h_{\R},U_t)$. Since $\Theta$ is an $*$-isomorphism, we have for all $x\in\widetilde{\Gamma}_T(\h_{\R},U_t)$, $ m_{\phi}(x) =\Theta^{-1}\circ p(m_{\phi} \otimes Id)_{\omega}p\circ\Theta(x)$. 
This implies $m_{\phi}$ can be extended to a completely bounded map on the norm closure of $\widetilde{\Gamma}_T(\h_{\R}, U_t)$, i.e. $m_{\phi}$ is completely bounded on $\Gamma_T(\h_{\R}, U_t)$. Then, by  \cite[Lemma 3.4]{HR11}, $m_{\phi}$ extends to a normal completely bounded map with the same cb norm. Therefore, we have
\begin{align*}
\norm{m_{\phi} : \Gamma_{T}(\h_{\R},U_t)^{\prime\prime} \rightarrow \Gamma_{T}(\h_{\R},U_t)^{\prime\prime}}_{cb}
&=\norm{\Theta^{-1}\circ p(m_{\phi} \otimes Id)_{\omega}p\circ\Theta}_{cb}\\ 
&\leqslant \norm{\Theta^{-1}}_{cb}\norm{p(m_{\phi} \otimes Id)_{\omega}p }_{cb}\norm{\Theta}_{cb}\\
&=\norm{p(m_{\phi} \otimes Id)_{\omega}p }_{cb}\\
&=\norm{{(m_{\phi} \otimes Id)_{\omega}}}_{cb}\\
&\leqslant \sup_{m\in\N} \norm{m_{\phi} : \Gamma_{\tilde{Q}}(\R^m)^{\prime\prime} \rightarrow \Gamma_{\tilde{Q}}(\R^m)^{\prime\prime}}_{cb} 
\end{align*}
Since $\Gamma_{\tilde{Q}}(\R^m)^{\prime\prime}$ is a subalgebra of $\Gamma_{\tilde{Q}}(\R^{m+1})^{\prime\prime}$ which is the range of a normal faithful trace-preserving conditional expectation that intertwines the action of $m_{\phi}$, the sequence of norms on the right-hand side is non-decreasing, so
\begin{align*}
\norm{m_{\phi} : \Gamma_{T}(\h_{\R},U_t)^{\prime\prime} \rightarrow \Gamma_{T}(\h_{\R},U_t)^{\prime\prime}}_{cb} 
&\leqslant \lim_{m\rightarrow\infty} \norm{m_{\phi} :\Gamma_{\tilde{Q}}(\R^m)^{\prime\prime} \rightarrow \Gamma_{\tilde{Q}}(\R^m)^{\prime\prime}}_{cb}\\
&=\norm{m_{\phi} :\Gamma_{\tilde{Q}}(\ell^2)^{\prime\prime} \rightarrow\Gamma_{\tilde{Q}}(\ell^2)^{\prime\prime}}_{cb}. 
\end{align*}

\end{proof}

By  ~\cite[Theorem 5.5]{JZ15}, it is known that the mixed $q$-Gaussian von Neumann algebras have the complete metric approximation property. Hence, if we consider the map $F_n$ on  $\Gamma_{\tilde{Q}}(\ell^2)^{\prime\prime}$ and the commutative relation from Theorem \ref{intertwin}, it follows from ~\cite[Theorem 5.5 ]{JZ15} and ~\cite[Proposition 3.3]{A11}, that $\norm{F_n}_{cb}\leqslant C(q)n^2$.

\begin{thm}\label{Proj}
 Let $\Gamma_{T}(\h_{\R},U_t)^{\prime\prime} $ be a mixed $q$-deformed Araki–Woods factor. Let $F_n$ be the map defined on $\widetilde\Gamma_T(\h_{\R},U_t)$, defined by $F_nW(\xi) = \delta_{n,l}W(\xi),$ where $\xi\in\h^{\otimes l}.$ Then, $F_n$ extends to a completely bounded, normal map on $\Gamma_T(\h_{\R},U_t)^{\prime\prime}$ and $\norm{F_n}_{cb} \leqslant C(q)n^2.$
\end{thm}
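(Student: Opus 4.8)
Recall that $F_n$ is precisely the radial multiplier $m_{\varphi_n}$ attached to $\varphi_n:\N\cup\set{0}\rightarrow\C$, $\varphi_n(l)=\delta_{l,n}$. The plan is to feed $\phi=\varphi_n$ into the preceding theorem, which transfers cb norms of radial multipliers from the mixed $q$-Gaussian factors to $\Gamma_T(\h_{\R},U_t)^{\prime\prime}$. Its hypothesis asks that $\sup_{m\in\N}\norm{F_n:\Gamma_{\tilde{Q}}(\R^m)^{\prime\prime}\rightarrow\Gamma_{\tilde{Q}}(\R^m)^{\prime\prime}}_{cb}<\infty$; as observed in that proof, the inclusions $\Gamma_{\tilde{Q}}(\R^m)^{\prime\prime}\subset\Gamma_{\tilde{Q}}(\R^{m+1})^{\prime\prime}$ are ranges of normal faithful trace-preserving conditional expectations intertwining the radial multipliers, so the sequence $m\mapsto\norm{F_n:\Gamma_{\tilde{Q}}(\R^m)^{\prime\prime}\rightarrow\Gamma_{\tilde{Q}}(\R^m)^{\prime\prime}}_{cb}$ is non-decreasing and its supremum equals $\norm{F_n:\Gamma_{\tilde{Q}}(\ell^2)^{\prime\prime}\rightarrow\Gamma_{\tilde{Q}}(\ell^2)^{\prime\prime}}_{cb}$. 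Thus the whole statement reduces to bounding the cb norm of $F_n$ on the single mixed $q$-Gaussian factor $\Gamma_{\tilde{Q}}(\ell^2)^{\prime\prime}$.

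For that bound I would quote the literature rather than reprove it. By \cite[Theorem~5.5]{JZ15} the mixed $q$-Gaussian von Neumann algebras have the complete metric approximation property, witnessed by a net of finite rank completely bounded radial multipliers; combining the structure of those approximants with the polynomial estimate of \cite[Proposition~3.3]{A11} for the projections onto the Wick words of fixed length in the $q$-Gaussian setting---whose proof is insensitive to replacing a scalar $q$ by a matrix $\tilde{Q}$, which is exactly what is carried out in \cite{JZ15}---one obtains $\norm{F_n:\Gamma_{\tilde{Q}}(\ell^2)^{\prime\prime}\rightarrow\Gamma_{\tilde{Q}}(\ell^2)^{\prime\prime}}_{cb}\leqslant C(q)\,n^2$ with $C(q)$ depending only on $q$. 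This is the only genuinely analytic ingredient; the embedding machinery of Section~3 (Theorems~\ref{Embedding}, \ref{image} and \ref{intertwin}) is exactly what lets such a bound cross over to the type~$\mathrm{III}$ setting, so I expect it, rather than anything in the present proof, to be the real obstacle.

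Combining the two, the preceding theorem gives
\begin{align*}
\norm{F_n:\Gamma_T(\h_{\R},U_t)^{\prime\prime}\rightarrow\Gamma_T(\h_{\R},U_t)^{\prime\prime}}_{cb}\leqslant\norm{F_n:\Gamma_{\tilde{Q}}(\ell^2)^{\prime\prime}\rightarrow\Gamma_{\tilde{Q}}(\ell^2)^{\prime\prime}}_{cb}\leqslant C(q)\,n^2<\infty .
\end{align*}
In particular $F_n$, a priori defined only on the $*$-algebra of Wick words $\widetilde{\Gamma}_T(\h_{\R},U_t)$, extends to a completely bounded map on the $C^*$-algebra $\Gamma_T(\h_{\R},U_t)$, and \cite[Lemma~3.4]{HR11} then upgrades it to a normal completely bounded map on $\Gamma_T(\h_{\R},U_t)^{\prime\prime}$ with the same cb norm, which is the claim. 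Once the $C(q)n^2$ growth of $\norm{F_n}_{cb}$ on the (mixed) $q$-Gaussian factors is taken as known, the present argument is only a short assembly of the transfer theorem and \cite[Lemma~3.4]{HR11}.
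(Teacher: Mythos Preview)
Your proposal is correct and follows essentially the same route as the paper: identify $F_n$ as the radial multiplier $m_{\varphi_n}$, apply the transfer theorem to reduce to $\Gamma_{\tilde Q}(\ell^2)^{\prime\prime}$, and then invoke \cite[Theorem~5.5]{JZ15} together with \cite[Proposition~3.3]{A11} for the $C(q)n^2$ bound. The only difference is cosmetic---you make the normality step via \cite[Lemma~3.4]{HR11} explicit, whereas the paper already absorbed that into the proof of the transfer theorem.
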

\begin{proof}
Note that $F_n = m_{\phi_n}$ on $\widetilde\Gamma_T(\h_{\R},U_t)$, where $\phi_n$ is the Kronecker delta function $\phi_n(k) = \delta_{n,k}.$ By  Theorem \ref{intertwin},  we obtain 
 \begin{align*}
 \norm{F_n : \Gamma_{T}(\h_{\R},U_t)^{\prime\prime} \rightarrow \Gamma_{T}(\h_{\R},U_t)^{\prime\prime}}_{cb}& \leqslant\sup_n\norm{F_n : \Gamma_Q(\R^n)^{\prime\prime} \rightarrow \Gamma_Q(\R^n)^{\prime\prime}}_{cb}\\& =\norm{F_n : \Gamma_Q(\ell^2)^{\prime\prime} \rightarrow \Gamma_Q(\ell^2)^{\prime\prime}}_{cb}\\&\leqslant C(q)n^2.
\end{align*}
\end{proof}

\section{Complete metric approximation property}
In this section, we use the results obtained in previous sections to prove our main result. We quote the following lemma proved in \cite[Proposition 3.17]{HR11} for our main result.

 \begin{lem}\label{contraction}
There is a net of finite rank contractions $(T_k)_{k\in \Lambda}$ converging to the identity on $\h_{\C}$ pointwise, such that $T_k =\J T_k \J,$ for $k\in \Lambda$. 
 \end{lem}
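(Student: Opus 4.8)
## Proof proposal for Lemma 4.1

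The plan is to use the spectral decomposition of the analytic generator $A$ of $(U_t)_{t\in\R}$ on $\h_\C$, together with the compatibility between $A$ and the complex conjugation $\J$ recorded earlier in the excerpt, to manufacture contractions that are $\J$-invariant and approximate the identity. Recall from the proof of Lemma~\ref{Product} that $\J\chi_{[1/\lambda,\lambda]}(A)=\chi_{[1/\lambda,\lambda]}(A)\J$, and that the subspaces $\chi_{[1/\lambda,\lambda]}(A)\h_\C$ exhaust $\h_\C$ as $\lambda\to\infty$. First I would set $e_\lambda := \chi_{[1/\lambda,\lambda]}(A)$, the spectral projection; these are projections commuting with $\J$, they converge strongly to the identity on $\h_\C$ as $\lambda\to\infty$ (using non-degeneracy of $A$), and each $e_\lambda\h_\C$ is separable.

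The next step is to approximate within each finite spectral band by finite-rank operators while keeping the $\J$-symmetry. On the real subspace $\h_\R$, pick an increasing sequence of finite-dimensional subspaces $V_k\subseteq\h_\R$ whose union is dense, and let $P_k$ be the orthogonal projection of $\h_\R$ onto $V_k$; extend $P_k$ complex-linearly to $\h_\C$, which automatically gives $P_k=\J P_k\J$ since $P_k$ is a real operator (equivalently, $V_k\otimes\C$ is $\J$-invariant). The point is that a contraction on $\h_\C$ obtained by conjugating/composing operators that each commute with $\J$ will again commute with $\J$: both $e_\lambda$ and $P_k$ are $\J$-invariant, so any product such as $e_\lambda P_k$ (or $P_k e_\lambda P_k$, to make it self-adjoint and finite rank) is a $\J$-invariant contraction of finite rank. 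Indexing the resulting net by $(\lambda,k)$ ordered suitably — or by a single diagonal sequence — one gets $T_k := P_k e_\lambda P_k$ with $\norm{T_k}\le 1$, $\operatorname{rank}(T_k)<\infty$, $\J T_k\J=T_k$, and $T_k\xi\to\xi$ for every $\xi\in\h_\C$ since both $e_\lambda\to 1$ strongly and $P_k\to 1$ strongly.

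The verification of pointwise convergence is the only place requiring a little care: for a fixed $\xi\in\h_\C$ one first chooses $\lambda$ large so that $\norm{\xi-e_\lambda\xi}$ is small, then, $e_\lambda\xi$ being a fixed vector, chooses $k$ large so that $\norm{P_k e_\lambda\xi - e_\lambda\xi}$ is small, using that $P_k\to 1$ strongly on $\h_\C$; a standard $\eps/3$ argument combined with $\norm{P_k}\le 1$ then gives $\norm{T_k\xi-\xi}\to 0$ along the net. I do not expect a genuine obstacle here — the statement is essentially a bookkeeping exercise with spectral projections — the only subtlety is making sure the approximating operators are simultaneously finite rank, contractive, and $\J$-symmetric, which is exactly why one conjugates by the real projections $P_k$ rather than using arbitrary finite-rank truncations. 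One may alternatively simply invoke \cite[Proposition 3.17]{HR11} directly, as the statement and hypotheses there are identical; the sketch above indicates how that proof goes in the present setting.
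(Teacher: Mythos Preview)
Your proposal is correct, and in fact the paper gives no proof at all for this lemma---it merely quotes \cite[Proposition 3.17]{HR11}, which you also mention as an alternative. Your sketch is therefore more detailed than what the paper provides.

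One small remark: for the statement \emph{as written}, the spectral projections $e_\lambda$ are unnecessary. The complex-linear extensions of the real orthogonal projections $P_k$ onto finite-dimensional subspaces $V_k\subseteq\h_\R$ are already finite rank, contractive, $\J$-invariant, and converge strongly to the identity on $\h_\C$; no spectral truncation is needed. The spectral cut-off would only matter if one additionally wanted the $T_k$ to have range in $\h_\C^{an}$ (the analytic vectors for $(U_t)$), which may be implicit in how these contractions are used in the second quantization in the main theorem, but is not part of the stated lemma. Either way, your argument goes through.
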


\begin{thm}
The mixed $q$-deformed Araki-Woods factor $\Gamma_T(\h_{\R},U_t)^{\prime\prime}$ has the w*-complete metric approximation property.
\end{thm}
 \begin{proof}

 We define a net $\Gamma_{n,t,k}:= \Gamma(e^{-t}T_k)B_n$, where $n\in \N, t > 0, k \in \Lambda$, the finite-rank maps $T_k$ come from the Lemma \ref{contraction}, and $B_n = F_0 + \cdots + F_n = m_{\chi\set{0,1,\ldots,n}}$ is the radial multiplier which projects onto Wick words of length at most $n$. Each $\Gamma_{n,t,k}$ is a finite rank map on $\Gamma_T(\h_{\R}, U_t)^{\prime\prime}$; indeed, $B_n$ tells us that we have only Wick words of bounded length and $T_k$ tells us that we can only draw vectors from a finite-dimensional Hilbert space, so we are left with a space of the form $\oplus_{d=0}^n (\C^m)^{\otimes d}$, which is finite-dimensional. We will pass to a limit with $k\rightarrow\infty, n \rightarrow\infty$, and $t\rightarrow 0$. The rate of convergences of $t$ and $n$ will not be independent and will be chosen in a way that assures the convergence $\norm{\Gamma_{n,t,k}}$. Note that $\Gamma(e^{-t}I)F_k = e^{-kt}F_k$, where $I$ denotes the identity operator on $\h$.
         
 \begin{align*}
 \norm{\Gamma_{n,t,k}}_{cb}= \norm{\Gamma(e^{-t}T_k)B_n}_{cb}  &\leqslant \norm{\Gamma(e^{-t}I)B_n}_{cb}  \\
 & \leqslant \norm{\Gamma(e^{-t}I)(B_n-1+1)}_{cb}  \\
&\leqslant \norm{\Gamma(e^{-t}I)}_{cb} + \norm{\Gamma(e^{-t}I)(1- B_n)}_{cb} \\
& \leqslant 1+\displaystyle\sum_{k>n}e^{-kt}\norm{F_k}_{cb}\\
& \leqslant 1+C(q)'\displaystyle\sum_{k>n}e^{-kt}k^2,
\end{align*}
where $1$ denotes the identity on $\Gamma_T(\h_{\R},U_t)^{\prime\prime}$. The third inequality follows from Theorem \ref{second quantization}, and the last inequality follows from Theorem \ref{Proj}.
The series $\displaystyle\sum_{k\geq 0}e^{-kt}k^2$ is convergent for any $t>0$. This implies, $\displaystyle\sum_{k> n}e^{-kt}k^2$ converges to $0$ as $n\rightarrow \infty$. Therefore, we can choose the parameters $k, n \rightarrow\infty$ and $t\rightarrow 0$ such that the completely bounded norms of the operators $\Gamma_{n,t, k}$ tend to $1$. Then, the operators $\frac{\Gamma_{n,t,k}}{\norm{\Gamma_{n,t,k}}_{cb}}$ are completely contractive. Note that the denominators converge to $1$, and hence, the net is uniformly bounded. To check the ultraweak convergence of this net to $1$, it is enough to show their convergence on a dense set in strong operator convergence. This can be seen in the wick products of simple tensors. So, consider a wick product $W(\xi)$, where $\xi=\xi_{t_1}\otimes\cdots\otimes\xi_{t_l}$, for some $\xi_{t_1},\ldots,\xi_{t_l}\in\h_{\C}$ and $l\in\N$. Then, 
 \begin{align*}
  &\norm{\left( \frac{\Gamma_{n,t,k}}{\norm{\Gamma_{n,t,k}}_{cb}}(W(\xi))-W(\xi)\right)\Omega}\\ & =  \norm{\frac{1}{\norm{\Gamma_{n,t,k}}_{cb}}((\Gamma(e^{-t}T_k)B_n)(W(\xi))-W(\xi))\Omega}\\
  & = \norm{\frac{1}{\norm{\Gamma_{n,t,k}}_{cb}}(\Gamma(e^{-t}T_k)(\delta_{n,l}W(\xi))-W(\xi))\Omega}\\
  & = \norm{\frac{1}{\norm{\Gamma_{n,t,k}}_{cb}}(\delta_{n,l}W(e^{-t}T_k\xi_{t_1}\otimes\cdots\otimes e^{-t}T_k\xi_{t_l})\Omega-W(\xi_{t_1}\otimes\cdots\otimes \xi_{t_l})\Omega}\\
  & = \norm{\frac{1}{\norm{\Gamma_{n,t,k}}_{cb}}e^{-lt}(T_k\xi_{t_1}\otimes\cdots\otimes T_k\xi_{t_l})-(\xi_{t_1}\otimes\cdots\otimes \xi_{t_l})}\\
  & = \norm{\frac{1}{\norm{\Gamma_{n,t,k}}_{cb}}e^{-lt}(T_k\xi_{t_1}\otimes\cdots\otimes T_k\xi_{t_l} - \xi_{t_1}\otimes\cdots\otimes \xi_{t_l})}+\\
  & \qquad \norm{( \frac{1}{\norm{\Gamma_{n,t,k}}_{cb}}e^{-lt} -1) (\xi_{t_1}\otimes\cdots\otimes \xi_{t_l})}
\end{align*}
Note that $T_k$'s are finite rank contractions from Lemma \ref{contraction}, which converges to identity pointwise. Hence, taking limit $n,k\rightarrow\infty$, and $t\rightarrow 0$, using the fact that $\Omega$ is the cyclic separating vector for the standard representation of $\Gamma_T(\h_{\R},U_t)^{\prime\prime}$, we claim that $\frac{\Gamma_{n,t,k}}{\norm{\Gamma_{n,t,k}}_{cb}}$ converges to 1 in strong operator topology on the dense set of wick products. This completes the proof.

 \end{proof}
 

\bibliographystyle{plain}

\begin{thebibliography}{10}
	
	\bibitem{AH14}
	Hiroshi Ando and Uffe Haagerup.
	\newblock Ultraproducts of von neumann algebras.
	\newblock {\em J. Funct. Anal.}, 266(12):6842--6913, 2014.
	
	\bibitem{A11}
	Stephen Avsec.
	\newblock Strong solidity of the $q$-gaussian algebras for all $-1< q < 1$.
	\newblock {\em arXiv:1110.4918}, 2011.
	
	\bibitem{ABW18}
	Stephen Avsec, Michael Brannan, and Mateusz Wasilewski.
	\newblock Complete metric approximation property for $q$-araki-woods algebras.
	\newblock {\em J. Funct. Anal.}, 274(2):544--572, 2018.
	
	\bibitem{BISS14}
	Panchugopal Bikram, Masaki Izumi, R.~Srinivasan, and V.~S. Sunder.
	\newblock On extendability of endomorphisms and of {$E_0$}-semigroups on
	factors.
	\newblock {\em Kyushu J. Math.}, 68(1):165--179, 2014.
	
	\bibitem{BKM23}
	Panchugopal Bikram, R.~Rahul Kumar, and Kunal Mukherjee.
	\newblock Factoriality of mixed q-deformed araki-woods von neumann algebras.
	\newblock {\em Infinite Dimensional Analysis, Quantum Probability and Related
		Topics}, 0(0):2350019, 0.
	
	\bibitem{BKMMS21}
	Panchugopal Bikram, Rahul Kumar, Rajeeb Mohanta, Kunal Mukherjee, and Diptesh
	Saha.
	\newblock On the von {N}eumann algebras associated to {Y}ang-{B}axter
	operators.
	\newblock {\em Proc. Roy. Soc. Edinburgh Sect. A}, 151(4):1331--1354, 2021.
	
	\bibitem{BKM21}
	Panchugopal Bikram, Rahul Kumar, and Kunal Mukherjee.
	\newblock On noninjectivity of mixed {$q$}-deformed {A}raki-{W}oods von
	{N}eumann algebras.
	\newblock {\em Kyoto J. Math.}, 64(2):501--518, 2024.
	
	\bibitem{BKM20}
	Panchugopal Bikram, Rahul Kumar~R, and Kunal Mukherjee.
	\newblock Mixed {$q$}-deformed {A}raki-{W}oods von {N}eumann algebras.
	\newblock {\em J. Noncommut. Geom.}, 17(4):1231--1297, 2023.
	
	\bibitem{BM21}
	Panchugopal Bikram and Rajeeb~R. Mohanta.
	\newblock Contractivity properties of {O}rnstein-{U}hlenbeck semigroup for
	mixed {$q$}-{A}raki-{W}oods von {N}eumann algebras.
	\newblock {\em J. Math. Phys.}, 62(11):Paper No. 113502, 19, 2021.
	
	\bibitem{BM17}
	Panchugopal Bikram and Kunal Mukherjee.
	\newblock Generator masas in $q$-deformed araki-woods von neumann algebras and
	factoriality.
	\newblock {\em J. Funct. Anal.}, 273(4):1443--1478, 2017.
	
	\bibitem{BH16}
	R\'emi Boutonnet and Cyril Houdayer.
	\newblock Structure of modular invariant subalgebras in free {A}raki-{W}oods
	factors.
	\newblock {\em Anal. PDE}, 9(8):1989--1998, 2016.
	
	\bibitem{BSp94}
	Marek Bo\.{z}ejko and Roland Speicher.
	\newblock Completely positive maps on coxeter groups, deformed commutation
	relations, and operator spaces.
	\newblock {\em Math. Ann.}, 300(1):97--120, 1994.
	
	\bibitem{EC78}
	Man~Duen Choi and Edward~G. Effros.
	\newblock Nuclear {$C\sp*$}-algebras and the approximation property.
	\newblock {\em Amer. J. Math.}, 100(1):61--79, 1978.
	
	\bibitem{Co76}
	A.~Connes.
	\newblock Classification of injective factors. {C}ases {$II_{1},$} {$II_{\infty
		},$} {$III_{\lambda },$} {$\lambda \not=1$}.
	\newblock {\em Ann. of Math. (2)}, 104(1):73--115, 1976.
	
	\bibitem{ER22}
	Edward~G. Effros and Zhong-Jin Ruan.
	\newblock {\em Theory of operator spaces}.
	\newblock AMS Chelsea Publishing, Providence, RI, 2022.
	\newblock Corrected reprint of the 2000 original [1793753].
	
	\bibitem{HR11}
	Cyril Houdayer and \'{E}ric Ricard.
	\newblock Approximation properties and absence of {C}artan subalgebra for free
	{A}raki-{W}oods factors.
	\newblock {\em Adv. Math.}, 228(2):764--802, 2011.
	
	\bibitem{JXu07}
	Marius Junge and Quanhua Xu.
	\newblock Noncommutative maximal ergodic theorems.
	\newblock {\em J. Amer. Math. Soc.}, 20(2):385--439, 2007.
	
	\bibitem{JZ15}
	Marius Junge and Qiang Zeng.
	\newblock Ultraproduct methods for mixed $q$-gaussian algebras.
	\newblock {\em arXiv:1505.07852}, 2015.
	
	\bibitem{K06}
	Ilona Kr\'{o}lak.
	\newblock Factoriality of von {N}eumann algebras connected with general
	commutation relations---finite dimensional case.
	\newblock In {\em Quantum probability}, volume~73 of {\em Banach Center Publ.},
	pages 277--284. Polish Acad. Sci. Inst. Math., Warsaw, 2006.
	
	\bibitem{KSW23}
	Manish Kumar, Adam Skalski, and Mateusz Wasilewski.
	\newblock Full solution of the factoriality question for {$q$}-{A}raki-{W}oods
	von {N}eumann algebras via conjugate variables.
	\newblock {\em Comm. Math. Phys.}, 402(1):157--167, 2023.
	
	\bibitem{Nou06}
	Alexandre Nou.
	\newblock Asymptotic matricial models and qwep property for $q$-araki--woods
	algebras.
	\newblock {\em J. Funct. Anal.}, 232(2):295--327, 2006.
	
	\bibitem{OP10}
	Narutaka Ozawa and Sorin Popa.
	\newblock On a class of {${\rm II}_1$} factors with at most one {C}artan
	subalgebra.
	\newblock {\em Ann. of Math. (2)}, 172(1):713--749, 2010.
	
	\bibitem{P03}
	Gilles Pisier.
	\newblock {\em Introduction to Operator Space Theory}.
	\newblock London Mathematical Society Lecture Note Series. Cambridge University
	Press, 2003.
	
	\bibitem{Ra02}
	Yves Raynaud.
	\newblock On ultrapowers of non commutative {$L_p$} spaces.
	\newblock {\em J. Operator Theory}, 48(1):41--68, 2002.
	
	\bibitem{Ta72}
	Masamichi Takesaki.
	\newblock Conditional expectations in von {N}eumann algebras.
	\newblock {\em J. Functional Analysis}, 9:306--321, 1972.
	
\end{thebibliography}

\end{document}